
\documentclass[a4paper]{amsart}
\usepackage{amssymb}
\usepackage{amsmath}
\usepackage{xypic}

\newtheorem{thm}{Theorem}[section]
\newtheorem*{thm*}{Theorem}

\newtheorem{cor}[thm]{Corollary}
\newtheorem*{cor*}{Corollary}
\newtheorem{lemma}[thm]{Lemma}
\newtheorem{prop}[thm]{Proposition}
\theoremstyle{definition}
\newtheorem{defn}[thm]{Definition}
\newtheorem{ex}[thm]{Example}

\theoremstyle{remark}
\newtheorem{rem}[thm]{Remark}

\newcommand{\ZZ}{{\mathbb  Z}}
\newcommand{\RR}{{\mathbb  R}}
\newcommand{\CC}{{\mathbb  C}}

\newcommand{\cV}{{\mathcal V}}

\newcommand{\E}{{\mathcal E}}

\newcommand{\F}{{\mathcal F}}

\newcommand{\so}{{{\mathfrak{so}}} }

\newcommand{\mfg}{\mathfrak{g}}
\newcommand{\mfh}{\mathfrak{h}}

\newcommand{\mfa}{\mathfrak{a}}

\newcommand{\mft}{\mathfrak{t}}

\newcommand{\Aut}{\operatorname{Aut}}

\newcommand{\depth}{\operatorname{depth}}

\newcommand{\Diff}{\operatorname{Diff}}

\newcommand{\odd}{\operatorname{odd}}

\newcommand{\Crit}{\operatorname{Crit}}

\newcommand{\SO}{\operatorname{SO}}

\newcommand{\Isom}{\operatorname{Isom}}
\newcommand{\Lie}{\operatorname{Lie}}

\newcommand{\I}{\operatorname{I}}
\newcommand{\V}{\operatorname{V}}
\newcommand{\Gr}{\operatorname{G}}

\numberwithin{equation}{section}

\begin{document}

\title{Equivariant cohomology of $K$-contact manifolds}

\author{Oliver Goertsches}
\address{Oliver Goertsches, Fachbereich Mathematik, Universit\"at Hamburg, Bundesstra\ss e 55 (Geomatikum), 20146 Hamburg, Germany}
\email{oliver.goertsches@math.uni-hamburg.de}
\author{Hiraku Nozawa}
\address{Hiraku Nozawa, Institut des Hautes \'{E}tudes Scientifiques, Le Bois-Marie 35, Route de Chartres 91440 Bures-sur-Yvette, France}
\email{nozawahiraku@06.alumni.u-tokyo.ac.jp}
\author{Dirk T\"oben}
\address{Dirk T\"oben, Instituto Matem\'atica e Estat\'istica, Universidade de S\~ao Paulo, Rua do Mat\~ao 1010, 05508-090 - S\~ao Paulo, Brazil}
\email{dtoeben@math.uni-koeln.de}

\keywords{$K$-contact manifolds, Killing foliations, transverse actions, basic cohomology, equivariant cohomology, Morse-Bott theory, GKM theory}
\date{}

\subjclass[2010]{53D35, 53D20, 55N25}

\begin{abstract}
We investigate the equivariant cohomology of the natural torus action on a $K$-contact manifold and its relation to the topology of the Reeb flow. Using the contact moment map, we show that the equivariant cohomology of this action is Cohen-Macaulay, the natural substitute of equivariant formality for torus actions without fixed points. As a consequence, generic components of the contact moment map are perfect Morse-Bott functions for the basic cohomology of the orbit foliation $\F$ of the Reeb flow. Assuming that the closed Reeb orbits are isolated, we show that the basic cohomology of $\F$ vanishes in odd degrees, and that its dimension equals the number of closed Reeb orbits. We characterize $K$-contact manifolds with minimal number of closed Reeb orbits as real cohomology spheres. We also prove a GKM-type theorem for $K$-contact manifolds which allows to calculate the equivariant cohomology algebra under the nonisolated GKM condition.
\end{abstract}

\maketitle

\tableofcontents

\addtocontents{toc}{\protect\setcounter{tocdepth}{1}}
\section{Introduction} \label{sec-intro}

\subsection{Torus actions on $K$-contact manifolds}
A $K$-contact manifold is a manifold with a contact form $\alpha$ and a Riemannian metric $g$ adapted to $\alpha$ (see Definition~\ref{def:alphaadapted}). Any compact manifold $M$ with a contact form whose Reeb flow preserves a Riemannian metric on $M$ has a compatible $K$-contact structure by the observation of Yamazaki (Proposition~2.1 of~\cite{Yamazaki}). Main examples of $K$-contact manifolds are Sasakian manifolds, which have been studied by Einstein geometers and physicists (see Boyer-Galicki~\cite{Boyer Galicki}).

Let $(M,\alpha,g)$ be a compact connected $K$-contact manifold. The torus $T$ obtained by the closure of the Reeb flow of $\alpha$ in the isometry group of $(M,g)$ acts on $M$ without fixed points. The following theorem, which will be proven in Section~\ref{sec:main}, should be regarded as a result analogous to the statement (Proposition~5.8 of Kirwan~\cite{Kirwan}) that Hamiltonian actions on compact symplectic manifolds are equivariantly formal (see Section~\ref{sec:eqcohom}).
\begin{thm*}
The $T$-action on $M$ is Cohen-Macaulay.
\end{thm*}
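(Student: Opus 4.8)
The plan is to produce, from a generic component $\Psi^X$ of the contact moment map $\Psi\colon M\to\mft^*$, a $T$-invariant Morse--Bott function whose critical manifolds are lower-dimensional $K$-contact manifolds carrying an \emph{almost free} residual torus action, and then to run equivariant Morse theory in the style of Atiyah--Bott--Kirwan to build $H^*_T(M;\RR)$ out of their (Cohen--Macaulay) equivariant cohomologies. If $\dim T=1$ the torus $T$ is the Reeb circle, which acts almost freely since the Reeb field $R$ is nowhere zero, so $H^*_T(M)\cong H^*(M/T;\RR)$ is finite dimensional and hence Cohen--Macaulay of Krull dimension $0=\dim T-1$; so assume $\dim T\ge 2$. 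Let $\xi\in\mft$ induce the Reeb flow, choose $X\in\mft$ generic (in particular $X\notin\RR\xi$), and put $f:=\Psi^X=\iota_{X_M}\alpha$. From $df=-\iota_{X_M}d\alpha$ and $\ker d\alpha_x=\RR R(x)$ one reads off that $x\in\Crit(f)$ iff $X_M(x)\in\RR R(x)$, i.e.\ iff $X\in\mft_x+\RR\xi$; since $\xi\notin\mft_x$ this forces, for generic $X$, $\codim\mft_x\le 1$, that is, $x$ lies on a one-dimensional $T$-orbit. Hence $\Crit(f)=\bigsqcup_H M^{H_0}$, the disjoint union over the finitely many corank-one isotropy subtori $H$ (with $H_0$ the identity component), and $f$ is non-constant (otherwise $X\in\RR\xi$), so $\Crit(f)\ne\emptyset$; in particular $M$ possesses a closed Reeb orbit.

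I would then check that each critical manifold $C:=M^{H_0}$ is a compact $K$-contact submanifold. Indeed $R$ is tangent to $C$, so the $g$-orthogonal complement of $TC$ lies in $\ker\alpha$ and carries no trivial $H_0$-summand; the $H_0$-invariant symplectic form $d\alpha|_{\ker\alpha}$ therefore splits along the isotypical decomposition and restricts nondegenerately to $\ker(\alpha|_C)$, so $\alpha|_C$ is contact with Reeb field $R|_C$ and adapted metric $g|_C$. On $C$ every $T$-orbit is one-dimensional, so the residual circle $T/H_0$ acts almost freely, whence $H^*_T(C)\cong H^*(BH_0)\otimes H^*_{T/H_0}(C)$ with $H^*_{T/H_0}(C)\cong H^*(C/(T/H_0);\RR)$ finite dimensional; thus $H^*_T(C)$ is Cohen--Macaulay of Krull dimension $\dim T-1$.

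The same local analysis shows that $f$ is Morse--Bott with the required normal structure. On $C$ the function $f$ is constant, say $f|_C=c$, and setting $Y:=X-c\xi\in\mfh_0$ one has $df=-\iota_{Y_M}d\alpha$ globally; at $x\in C$ the Hessian of $f$ on the normal space $\nu_xC$ equals $-d\alpha_x(A\,\cdot,\cdot)$, where $A$ is the linearization of $Y_M$, which is invertible on $\nu_xC$. Decomposing the normal bundle into $H_0$-weight subbundles $\nu_\chi$ --- each preserved by the structure tensor $\phi$, since $\phi$ commutes with the $T$-action --- and using that $d\alpha$ is a nonzero constant multiple of $g(\phi\,\cdot,\cdot)$ on $\ker\alpha$, one finds that $\Hess f|_{\nu_\chi}$ is a definite multiple of $g|_{\nu_\chi}$, of sign governed by $\langle d\chi,Y\rangle\ne 0$ (which is where a further genericity of $X$ is used). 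So $f$ is Morse--Bott, each index is even, and the negative normal bundle $\nu^-$ of each $C$ is a $T$-equivariant \emph{complex} subbundle, in particular $T$-equivariantly oriented. Moreover its equivariant Euler class $e_T(\nu^-)=\prod_\chi c_1^T(\nu_\chi)\in H^*_T(C)$ is a non-zero-divisor: under $H^*_T(C)\cong H^*(BH_0)\otimes H^*_{T/H_0}(C)$ each factor has the form $\chi\otimes 1+1\otimes(\text{degree }2)$ with $0\ne\chi\in\mfh_0^*=H^2(BH_0)$, and after a linear change of coordinates making $\chi$ a coordinate this is plainly a non-zero-divisor in a polynomial ring over $H^*_{T/H_0}(C)$, hence so is the product. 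The Atiyah--Bott--Kirwan perfection criterion then applies: $f$ is equivariantly perfect, so $H^*_T(M)$ is an iterated extension of the modules $H^{*-\lambda_C}_T(C)$. Since these are all Cohen--Macaulay of the single Krull dimension $\dim T-1$, and an extension of Cohen--Macaulay modules of equal dimension is again Cohen--Macaulay, $H^*_T(M)$ is Cohen--Macaulay.

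The step I expect to be hardest is the Morse--Bott analysis of $f$: it is the $K$-contact analogue of the classical fact that components of a symplectic moment map are Morse--Bott with even indices and complex negative normal bundles, but here the critical manifolds are not fixed-point components, so one must work relative to the corank-one subtori $H_0$ and keep careful track of the $\phi$-compatibility of the $H_0$-weight decomposition of the normal bundle, of the Hessian computation, and of the (finitely many) genericity conditions required of $X$.
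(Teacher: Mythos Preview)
Your argument is correct and follows the same strategy as the paper: use a generic component of the contact moment map as a $T$-invariant Morse--Bott function whose critical set is the union of closed Reeb orbits, and then deduce the Cohen--Macaulay property via equivariant Morse theory. The paper packages the second step as a citation to Theorem~7.1 of~\cite{GT2009}, whose proof is precisely the Atiyah--Bott--Kirwan equivariant perfection (via non-zero-divisor Euler classes) and extension-of-Cohen--Macaulay-modules argument that you have spelled out in detail.
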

\noindent See Definition~\ref{def:CMtorusactions} for the definition of Cohen-Macaulay actions. The $T$-action preserves the orbit foliation $\F$ of the Reeb flow. Therefore $(M,\F)$ admits a transverse action (see Definition~\ref{def:transverseaction}) of the abelian Lie algebra $\mfa\cong \mft/\RR R$ where $\mft = \Lie(T)$ and $R$ is the Reeb vector field of $\alpha$. In terms of basic equivariant cohomology of $(M,\F)$ introduced in \cite{GT2010}, the above theorem is equivalent to the following.
\begin{thm*}
The $\mfa$-action on $(M,\F)$ is equivariantly formal.
\end{thm*}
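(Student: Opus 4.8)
The plan is to transcribe, into basic cohomology and for the transverse $\mfa$-action, the classical argument that Hamiltonian torus actions on compact symplectic manifolds are equivariantly formal. Concretely, it suffices to show that $H^*_\mfa(M,\F)$ is a free module over $S(\mfa^*)$.

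\emph{The contact moment map.} Since $T$ preserves $\alpha$, the map $\Psi\colon M\to\mft^*$ given by $\langle\Psi(x),X\rangle=\alpha_x(X^*_x)$, with $X^*$ the fundamental vector field of $X\in\mft$, is $T$-invariant; Cartan's formula together with $\cL_{X^*}\alpha=0$ gives $d\langle\Psi,X\rangle=-\iota_{X^*}d\alpha$, and since $\langle\Psi(\cdot),R\rangle=\alpha(R)\equiv1$ the image of $\Psi$ lies in the affine hyperplane $\{\xi\in\mft^*:\langle\xi,R\rangle=1\}$, a torsor over $\mfa^*=(\mft/\RR R)^*$. An affine identification yields $\overline{\Psi}\colon M\to\mfa^*$, and from $\iota_Rd\langle\Psi,X\rangle=\iota_{X^*}\iota_Rd\alpha=0$ its components are basic functions; $\overline{\Psi}$ is a moment map for the transverse $\mfa$-action relative to the transverse symplectic form induced by $d\alpha$.

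\emph{A generic component as a Morse-Bott function for $\F$.} Fix a generic $\xi\in\mfa$ and put $\phi=\langle\overline{\Psi},\xi\rangle$, a basic function with $d\phi=-\iota_{\xi^*}d\alpha$ for any lift of $\xi$ to $\mft$. As $d\alpha$ is nondegenerate on $\ker\alpha$ while $\iota_Rd\alpha=0$, a point $x$ is critical for $\phi$ precisely when $\xi^*_x$ is a multiple of $R_x$, and pairing with $\alpha_x$ shows the multiple equals $\phi(x)$; hence every connected component $N$ of $\Crit(\phi)$, with constant critical value $c$, lies in the fixed-point set $M^{T'}$ of the positive-dimensional subtorus $T'\subseteq T$ generated by a lift of $\xi$ corrected by $-cR$. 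Such an $N$ is totally geodesic, $T$-invariant and tangent to $R$, and $\ker\alpha\cap TN$ is a symplectic subbundle of $(\ker\alpha,d\alpha)$ because fixed subspaces of symplectic torus actions are symplectic; therefore $(N,\alpha|_N,g|_N)$ is a compact $K$-contact manifold on which $T$ acts, with orbit foliation $\F|_N$. Its transverse normal bundle $\nu(N,\F)\subseteq\ker\alpha|_N$ is the sum of the nonzero $T'$-weight subbundles, is preserved by $d\alpha$, and the standard moment-map Hessian computation shows that the transverse Hessian of $\phi$ is nondegenerate along $N$ and $d\alpha$-block diagonal with even-dimensional blocks. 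Thus $\phi$ is a Morse-Bott function for $\F$ with even transverse indices $\lambda_N$; moreover $\nu(N,\F)$, hence the negative normal bundle $\nu_N^-$, has no nonzero $T'$-fixed vector, so the $T$-equivariant Euler class of $\nu_N^-$ is not a zero divisor in $H^*_T(N)$.

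\emph{Equivariant Morse-Bott induction.} I would then run the Atiyah-Bott-Kirwan equivariantly-perfect Morse-Bott argument for basic cohomology: ordering the critical values of $\phi$ and using the basic Thom isomorphism for the even-rank, transversely oriented bundles $\nu_N^-$, the long exact sequences relating the basic equivariant cohomologies of consecutive sublevel sets split, since the equivariant Euler classes of the $\nu_N^-$ are non-zero-divisors; this gives the equivariant perfection identity $P_t\big(H^*_\mfa(M,\F)\big)=\sum_N t^{\lambda_N}P_t\big(H^*_\mfa(N,\F|_N)\big)$. By induction on $\dim M$ — the base case $\dim M=1$ being trivial, and each $N$ having strictly smaller dimension because $T$ acts effectively so $M^{T'}\subsetneq M$ — each $H^*_\mfa(N,\F|_N)$ is free over $S(\mfa^*)$: the inductive hypothesis gives freeness of $H^*_{\mfa_N}(N,\F|_N)$ over $S(\mfa_N^*)$ for the intrinsic algebra $\mfa_N=\mft_N/\RR R$ of $N$, and since the $\mfa$-action on $(N,\F|_N)$ factors through the surjection $\mfa\to\mfa_N$, this freeness is preserved under pullback to the polynomial extension $S(\mfa^*)\supseteq S(\mfa_N^*)$. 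Hence the right-hand side, and therefore $H^*_\mfa(M,\F)$, is a free $S(\mfa^*)$-module. The principal difficulty is foundational: developing equivariant Morse-Bott theory for basic cohomology of the Riemannian foliation $\F$ — a basic tubular neighbourhood theorem and Thom isomorphism, the foliated character of the bundle-like gradient flow of $\phi$, and the non-Hausdorff leaf space — rigorously enough to justify the splitting of the sublevel-set sequences, and confirming that the induction closes, i.e.\ that the critical submanifolds carry precisely the structure needed to invoke the inductive hypothesis.
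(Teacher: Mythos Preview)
Your proposal is correct and follows the same overall route as the paper: use a generic component of the contact moment map as a basic Morse--Bott function and then run the Kirwan-style equivariant-perfection argument in basic cohomology to conclude that $H^\bullet_\mfa(M,\F)$ is a free $S(\mfa^*)$-module. The paper's proof is a two-line citation: Proposition~\ref{prop:momentmorsebott} supplies the Morse--Bott function with $\Crit(\Phi^X)=C$, and the equivariant basic Morse--Bott machinery you sketch (tubular neighbourhoods, Thom isomorphism, splitting via non-zero-divisor Euler classes) is exactly the content of Theorem~6.3 of~\cite{GT2010}, which the paper invokes as a black box. So your ``principal difficulty'' is precisely what that reference develops.

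One simplification worth noting: your induction on $\dim M$ is unnecessary. With the paper's notion of \emph{generic} (namely $X\notin\bigcup_{p:\tilde\mft_p\neq\mft}\tilde\mft_p$), the critical set is exactly $C$, the union of closed Reeb orbits. On each connected component $B$ of $C$ the $T$-orbits coincide with the Reeb orbits, so the transverse $\mfa$-action on $(B,\F|_B)$ is trivial and $H^\bullet_\mfa(B,\F|_B)\cong S(\mfa^*)\otimes H^\bullet(B/\F)$ is free outright --- your intrinsic algebra $\mfa_N$ is zero. Thus the equivariant-perfection identity directly exhibits $H^\bullet_\mfa(M,\F)$ as a sum of shifted free modules, with no recursive step required. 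Your inductive framing is not wrong, just more elaborate than needed once genericity is pinned down.
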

\noindent See Definition~\ref{def:eqformaltraction} for the definition of equivariant formality of transverse actions. Using results from~\cite{GT2009} and~\cite{GT2010}, these theorems will be deduced from the fact that a generic component of the contact moment map is a Morse-Bott function whose critical set equals the set of closed Reeb orbits.

\subsection{Topology of the Reeb flow of $K$-contact manifolds}
We will deduce several consequences on the topology of a compact $(2n+1)$-dimensional $K$-contact manifold $(M,\alpha,g)$ from the two theorems in the last section. Let $\F$ be the orbit foliation of the Reeb flow of $\alpha$, and $H^{\bullet}(M,\F)$ be the basic cohomology of $(M,\F)$. In particular, we will show in Section~\ref{sec:cons}
\begin{thm*} If the closed Reeb orbits of $\alpha$ are isolated, then we have
\begin{enumerate}
\item $H^1(M)=0$,
\item $H^{\odd}(M,\F)=0$ and
\item $\dim H^{\bullet}(M,\F) = \#\{$closed Reeb orbits of $\alpha \}$.
\end{enumerate}
\end{thm*}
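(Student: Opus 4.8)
The plan is to combine the two structural theorems from the first section with classical Morse--Bott theory applied to a generic component $f$ of the contact moment map. By the discussion in the excerpt, such an $f$ is a Morse--Bott function whose critical set is exactly the (finite, by hypothesis) set of closed Reeb orbits, and moreover it is a perfect Morse--Bott function for the basic cohomology $H^\bullet(M,\F)$. So the first step is to record the basic Morse inequalities in their equality (perfect) form: $\sum_k \dim H^k(M,\F)\, t^k = \sum_{\text{orbits } \gamma} t^{\lambda(\gamma)} P_\gamma(t)$, where $P_\gamma(t)$ is the basic Poincar\'e polynomial of the critical orbit $\gamma$ and $\lambda(\gamma)$ its index. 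Since each closed Reeb orbit is a single leaf of $\F$, it is a point in the (possibly non-Hausdorff) leaf space, so its basic cohomology is that of a point: $P_\gamma(t)=1$. Hence $\sum_k \dim H^k(M,\F)\, t^k = \sum_\gamma t^{\lambda(\gamma)}$.

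The second step is the parity argument. For a component of a moment map of a torus action, the normal bundle of each critical manifold carries a complex structure coming from the isotropy weights, so all Morse--Bott indices $\lambda(\gamma)$ are even. Here one must check the transverse analogue: the Hessian of $f$ along the normal directions to $\F$ at a closed Reeb orbit splits into two-dimensional weight spaces under the transverse $\mfa$-action (equivalently, under the $T$-action transverse to $\F$), so the transverse index is even. This immediately gives statements (2) and (3): the right-hand side $\sum_\gamma t^{\lambda(\gamma)}$ has only even-degree terms, so $H^{\odd}(M,\F)=0$, and setting $t=1$ yields $\dim H^\bullet(M,\F) = \#\{\text{closed Reeb orbits}\}$.

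For statement (1), $H^1(M)=0$, I would use the relationship between the ordinary cohomology of $M$ and the basic cohomology of $\F$ together with the fact that $\F$ is a one-dimensional (Reeb) foliation. The Gysin-type sequence for the Reeb foliation relates $H^\bullet(M)$ and $H^\bullet(M,\F)$: in low degrees it reads $0\to H^1(M,\F)\to H^1(M)\to H^0(M,\F)\xrightarrow{\cup[d\alpha]} H^2(M,\F)$, and the Euler class $[d\alpha]\in H^2(M,\F)$ is nonzero (it is a transverse symplectic form), so the last map is injective. Combined with $H^1(M,\F)=0$ from statement (2), this forces $H^1(M)=0$. Alternatively, since the $T$-action is Cohen--Macaulay and hence in particular the basic cohomology is a finitely generated module over $H^\bullet(B\mfa)$ of full support, one can run a comparison of Poincar\'e series; but the Gysin sequence is the cleanest route.

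The main obstacle I anticipate is the careful verification that a generic component $f$ of the contact moment map really is a \emph{perfect} Morse--Bott function for \emph{basic} cohomology with critical set equal to the closed Reeb orbits — i.e., genuinely importing the transverse Morse--Bott machinery of \cite{GT2009} and the equivariant-formality/Cohen--Macaulay input correctly, and checking that perfection holds over $\RR$ rather than merely the Morse inequalities. Once that is in hand, the parity of the indices and the Gysin sequence argument are routine. A secondary point requiring care is the possible non-Hausdorffness of the leaf space near a closed Reeb orbit whose nearby leaves are non-closed; one must confirm that the local basic cohomology contribution of such an orbit is still that of a point, which follows because a neighborhood of the orbit is equivariantly modeled on a linear action and the basic cohomology of that local model is one-dimensional.
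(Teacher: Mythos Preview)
Your proposal is correct and follows essentially the same approach as the paper. For (2) you use the perfect basic Morse--Bott formula of Proposition~\ref{prop:perfectbasic} together with the evenness of the indices (coming from the isotropy action on the normal bundle), and for (1) you invoke the Gysin sequence of the isometric flow to get $H^1(M)\cong H^1(M,\F)$; this is exactly the paper's argument in Theorem~\ref{cor:odd}. The only minor difference is in (3): you read off the count of closed Reeb orbits by setting $t=1$ in the perfect basic Morse--Bott identity, whereas the paper (Theorem~\ref{cor:numC}) instead cites the Borel-type localization theorem from~\cite{GT2010} together with equivariant formality. Both routes are immediate once Proposition~\ref{prop:perfectbasic} and Theorem~\ref{thm:foliationeqformal} are in hand, and the paper in fact uses your $t=1$ route implicitly in the proof of Theorem~\ref{cor:odd} when computing the basic Poincar\'e series.
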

\noindent If not further specified, cohomology is taken with real coefficients. Part $(1)$ of this theorem was originally proven by Rukimbira (Theorem~2 of~\cite{Rukimbira}, see also Theorem~7.4.8 of \cite{Boyer Galicki}). We remark that contact toric manifolds of Reeb type are examples of $K$-contact manifolds with isolated closed Reeb orbits (see Proposition~\ref{prop:toricisolatedorbits}).

Because the powers of the basic Euler class $d\alpha$ are nonzero elements of $H^\bullet(M,\F)$, part $(3)$ of the above theorem gives a new proof of the following result by Rukimbira (Corollary~1 of~\cite{Rukimbira}).

\begin{thm*} The Reeb flow of $\alpha$ has at least $n+1$ closed orbits.
\end{thm*}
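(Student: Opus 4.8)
The plan is to combine part $(3)$ of the theorem above on isolated closed Reeb orbits with the fact that the powers of the basic Euler class $[d\alpha]$ furnish $n+1$ linearly independent classes in $H^{\bullet}(M,\F)$. First I would dispose of the trivial case: if the Reeb flow has infinitely many closed orbits there is nothing to prove, so I may assume the set of closed Reeb orbits is finite. A finite collection of pairwise disjoint circles is in particular isolated, so that theorem applies and gives
\[
\#\{\text{closed Reeb orbits of }\alpha\}=\dim H^{\bullet}(M,\F).
\]
It therefore suffices to prove $\dim H^{\bullet}(M,\F)\ge n+1$.

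Next I would record that $d\alpha$ represents a class in basic cohomology: it is closed, and since $\iota_R\,d\alpha=0$ and $\cL_R\,d\alpha=0$ it is a basic $2$-form for $\F$; hence each power $(d\alpha)^k$ with $0\le k\le n$ is a closed basic form of degree $2k$. The key point is that $(d\alpha)^n$ is not exact in the basic complex. To see this I would consider the linear functional $\beta\mapsto\int_M\alpha\wedge\beta$ on closed basic $2n$-forms and check that it descends to $H^{2n}(M,\F)$: for a basic $(2n-1)$-form $\eta$ one has $\alpha\wedge d\eta=d\alpha\wedge\eta-d(\alpha\wedge\eta)$, where the first term vanishes because it is basic of degree $2n+1>\codim\F$ and the second integrates to zero over the closed manifold $M$ by Stokes' theorem. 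Since $\alpha\wedge(d\alpha)^n$ is a volume form on $M$, the functional sends $[(d\alpha)^n]$ to a nonzero number, so $[(d\alpha)^n]\ne 0$ in $H^{2n}(M,\F)$.

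Finally, for each $0\le k\le n$ the product $[(d\alpha)^k]\smile[(d\alpha)^{n-k}]=[(d\alpha)^n]\ne 0$ forces $[(d\alpha)^k]\ne 0$; since these $n+1$ classes lie in the pairwise distinct degrees $0,2,4,\dots,2n$, they are linearly independent, whence $\dim H^{\bullet}(M,\F)\ge n+1$ and the theorem follows. I do not anticipate a genuine obstacle; the only slightly delicate point is the well-definedness and nonvanishing of the integration functional on $H^{2n}(M,\F)$ --- essentially the homological orientability of the transversely oriented Riemannian foliation $\F$ --- but as indicated this is obtained directly from a degree count and Stokes' theorem, with no need for the full Poincar\'e duality of basic cohomology.
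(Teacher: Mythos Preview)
Your proposal is correct and follows essentially the same route as the paper: reduce to the isolated case, invoke $\#\{\text{closed Reeb orbits}\}=\dim H^{\bullet}(M,\F)$, and then bound this dimension below by $n+1$ using the nonvanishing of the powers $[(d\alpha)^k]$. The only difference is that the paper simply asserts that $[(d\alpha)^k]\neq 0$ for $1\le k\le n$ (appealing to the basic Euler class and the contact condition), whereas you supply a self-contained argument for $[(d\alpha)^n]\neq 0$ via the integration functional $\beta\mapsto\int_M\alpha\wedge\beta$ on $H^{2n}(M,\F)$; your verification that this functional is well defined (degree count plus Stokes) is correct and is a nice explicit justification of a step the paper leaves implicit.
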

The next theorem characterizes $K$-contact manifolds with minimal number of closed Reeb orbits.
\begin{thm*} If the closed Reeb orbits of $\alpha$ are isolated, then their number is exactly $n+1$ if and only if $M$ is a real cohomology sphere.
\end{thm*}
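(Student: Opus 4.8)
The plan is to combine the Morse--Bott structure coming from the two Cohen--Macaulay/equivariant formality theorems with the preceding theorem computing $\dim H^\bullet(M,\F)$. Suppose first that the closed Reeb orbits are isolated and that there are exactly $n+1$ of them. By the theorem in the previous subsection, $H^{\odd}(M,\F)=0$ and $\dim H^\bullet(M,\F)=n+1$, so $H^\bullet(M,\F)$ is concentrated in even degrees and has total dimension $n+1$. On the other hand, the powers $1, [d\alpha], [d\alpha]^2,\dots,[d\alpha]^n$ are $n+1$ nonzero classes in $H^0(M,\F), H^2(M,\F),\dots, H^{2n}(M,\F)$ respectively, as recalled in the excerpt. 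Since these live in distinct (even) degrees they are linearly independent, and there are exactly $n+1$ of them, so they form a basis: $H^{2i}(M,\F)=\RR\cdot[d\alpha]^i$ for $0\le i\le n$ and all other basic cohomology vanishes. In particular the basic cohomology ring is $\RR[x]/(x^{n+1})$ with $x=[d\alpha]$ of degree $2$, i.e.\ the basic cohomology of $M$ agrees with that of $\CC P^n$.

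Next I would pass from basic cohomology to the ordinary cohomology of $M$ using the Gysin-type sequence of the one-dimensional foliation $\F$. Since $M$ is a compact $K$-contact manifold, $\F$ is the orbit foliation of a nowhere-vanishing vector field $R$ with $\alpha(R)=1$, so one has the long exact sequence relating $H^\bullet(M)$, $H^\bullet(M,\F)$ and $H^{\bullet-2}(M,\F)$, in which the connecting map $H^{\bullet-2}(M,\F)\to H^\bullet(M,\F)$ is cup product with the basic Euler class $[d\alpha]$. Using the computation above, this connecting map is multiplication by $x$ on $\RR[x]/(x^{n+1})$, which is injective in degrees $0,2,\dots,2(n-1)$ and has one-dimensional cokernel only in degree $2n$, and an analogous statement in odd degrees (where everything vanishes). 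Feeding this into the exact sequence yields $H^0(M)=H^{2n+1}(M)=\RR$ and $H^k(M)=0$ for $0<k<2n+1$; that is, $M$ is a real cohomology $(2n+1)$-sphere. Conversely, if $M$ is a real cohomology sphere, then $H^1(M)=0$ forces (via the same Gysin sequence, or directly) the basic cohomology to be as small as possible, and in particular the powers of $[d\alpha]$ span $H^\bullet(M,\F)$; combined with part $(3)$ of the previous theorem this gives exactly $n+1$ closed Reeb orbits. One should double-check the converse does not even need the isolatedness hypothesis beyond what is already assumed, but as stated isolatedness is in force on both sides.

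The main obstacle I anticipate is making the Gysin sequence argument rigorous and extracting precisely the right maps: one must identify the relevant foliated de Rham complex, verify that the connecting homomorphism is indeed cup product with $[d\alpha]$ (up to sign and normalization), and handle the top degree $2n+1$ carefully since that is where the only nonzero odd cohomology of $M$ appears. A clean alternative, which I would use if the Gysin bookkeeping becomes awkward, is to argue at the level of the Leray--Serre or Gysin spectral sequence of the Seifert fibration-like structure, or to invoke a general principle that a $K$-contact manifold whose basic cohomology equals $H^\bullet(\CC P^n)$ with $[d\alpha]$ generating must be a rational homology sphere (this is classical in the Sasakian/Boothby--Wang setting and the same proof works transversally here). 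Either way, the conceptual content is: minimal number of closed Reeb orbits $\iff$ basic cohomology as small as the Euler class forces $\iff$ $M$ is a cohomology sphere. The only genuinely new input beyond the earlier theorems is this translation, and the risk lies entirely in the exact-sequence/spectral-sequence degree bookkeeping rather than in any deep geometry.
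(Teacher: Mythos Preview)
Your proposal is correct and follows essentially the same route as the paper: compute $H^\bullet(M,\F)\cong\RR[x]/(x^{n+1})$ with $x=[d\alpha]$ from the dimension count and the nonvanishing of the powers of the basic Euler class, then feed this into the Gysin sequence of the isometric flow $\F$ (whose connecting map is indeed cup product with $[d\alpha]$, as established by Saralegui and recorded in Boyer--Galicki) to conclude that $M$ is a real cohomology sphere, and run the Gysin sequence in reverse for the converse. The paper packages the ring computation as a separate corollary and cites the Gysin sequence rather than rederiving it, but the argument is the same; your anticipated ``obstacle'' is exactly what the citation handles.
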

\noindent The only-if-part of this theorem improves a result of Rukimbira (Theorem~1 of~\cite{Rukimbira2}).

In Section~\ref{sec:Rukimbira} we provide an example of a $7$-dimensional simply-connec\-ted real cohomology sphere not homeomorphic to $S^7$ with a $K$-contact structure such that the closed Reeb orbits are isolated. This serves as a counterexample to Theorem~1 of Rukimbira~\cite{Rukimbira3} (quoted in Theorem~7.4.7 of \cite{Boyer Galicki}) which claims that a $K$-contact $(2n+1)$-manifold with exactly $n+1$ closed Reeb orbits is finitely covered by $S^{2n+1}$.

\subsection{GKM theory}
In Section~\ref{sec:GKM} we will show that a version of GKM theory~\cite{GKM} applies to Cohen-Macaulay actions and thus in particular to the above $T$-action on $K$-contact manifolds. This allows to compute the equivariant cohomology $H^*_T(M)$ as a graded $S(\mft^*)$-algebra. These results show that as in the case of Hamiltonian actions on symplectic manifolds, there is a strong link between the topology of $K$-contact manifolds and their equivariant cohomology.

\noindent {\bf Acknowledgements.} The second author is partially supported by Postdoctoral Fellowship of the French Government (662014L) and the Research Fellowship of Canon Foundation in Europe. Tis work started after the second and third authors met at Centre de Recerca Matem\`{a}tica in Barcelona during the research programme "Foliations" in the spring of 2010. The second and the third authors are grateful to Centre de Recerca Matem\`{a}tica for the invitation and the hospitality.

\section{$K$-contact manifolds} \label{sec:K-contact}

\subsection{Fundamentals}
Let $M$ be an odd-dimensional compact manifold with a contact form $\alpha$ and a Riemannian metric $g$. The Reeb vector field $R$ of $\alpha$ is characterized by the conditions $\alpha(R)=1$ and $\iota_R d\alpha=0$. The {\em Reeb flow} of $\alpha$ is the flow generated by $R$. The Reeb flow of $\alpha$ leaves $\alpha$ invariant.
\begin{defn}\label{def:alphaadapted}
A Riemannian metric $g$ on $(M,\alpha)$ is said to be {\em adapt\-ed to} $\alpha$ if
\begin{enumerate}
\item $g$ is preserved by the Reeb flow of $\alpha$ and
\item there exists an almost complex structure $J$ on $\ker \alpha$ such that we have $g(X,Y) = d\alpha(X,JY)$ for all $X$, $Y$ in $C^{\infty}(\ker \alpha)$.
\end{enumerate}
\end{defn}
\noindent We recall
\begin{defn}
$(M,\alpha,g)$ is called a {\em $K$-contact manifold} if $g$ is adapt\-ed to $\alpha$.
\end{defn}
A compact $K$-contact manifold has a natural torus action: The closure $T$ of the Reeb flow in the isometry group $\Isom(M,g)$ of $(M,g)$ is a connected abelian Lie subgroup. As  $\Isom(M,g)$ is compact by the Myers-Steenrod theorem~\cite{Myers Steenrod}, so is $T$, which implies that $T$ is a torus.
\begin{defn}
The {\em rank} of $(M,\alpha,g)$ is the dimension of $T$.
\end{defn}
\noindent The rank of a compact $K$-contact $(2n+1)$-manifold $(M,\alpha,g)$ is bounded from above by $n+1$ (see Corollary~1 of Rukimbira~\cite{Rukimbira4}). Because the Reeb flow preserves $\alpha$, the $T$-action preserves $\alpha$ by continuity. Note that $T$ as a subtorus in the diffeomorphism group of $M$ is independent of the choice of $g$. The following observation due to Yamazaki characterizes contact forms which have a compatible $K$-contact structure in terms of torus actions:
\begin{prop}[{\cite[Proposition~2.1]{Yamazaki}}]\label{prop:Yamazaki} The following are equivalent for a compact manifold $M$ with a contact form $\alpha$:
\begin{enumerate}
\item $(M,\alpha)$ admits a metric adapted to $\alpha$,
\item the Reeb flow of $\alpha$ preserves a Riemannian metric on $M$ and
\item there exists a torus action on $M$ such that the Reeb flow of $\alpha$ is a dense subaction of the torus action.
\end{enumerate}
\end{prop}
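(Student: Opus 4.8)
The plan is to prove the cycle of implications $(1)\Rightarrow(2)\Rightarrow(3)\Rightarrow(1)$. The implication $(1)\Rightarrow(2)$ is immediate, since condition~$(1)$ of Definition~\ref{def:alphaadapted} says precisely that a metric adapted to $\alpha$ is invariant under the Reeb flow. For $(2)\Rightarrow(3)$, suppose $g$ is a Riemannian metric on $M$ preserved by the Reeb flow. Then the Reeb flow is a one-parameter subgroup of $\Isom(M,g)$, which is a compact Lie group by the Myers--Steenrod theorem~\cite{Myers Steenrod}. Its closure $T$ in $\Isom(M,g)$ is a closed, connected, abelian subgroup of a compact Lie group, hence a torus, and by construction $T$ acts on $M$ with the Reeb flow as a dense subaction.

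The substantive step is $(3)\Rightarrow(1)$. Assume a torus $T$ acts on $M$ with the Reeb flow dense in $T$. First I would observe that $T$ preserves $\alpha$: the Reeb flow preserves $\alpha$ because $\mathcal{L}_R\alpha=\iota_R\,d\alpha+d\,\iota_R\alpha=0$ by Cartan's formula, and invariance of $\alpha$ is a closed condition, so it passes to the closure of the Reeb flow. Averaging an arbitrary metric over the compact group $T$ then yields a $T$-invariant Riemannian metric $h$ on $M$, and the goal becomes to deform $h$ into a metric adapted to $\alpha$ without destroying $T$-invariance.

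To do this I would work on the contact distribution $\ker\alpha$, on which $d\alpha$ restricts to a fiberwise nondegenerate $2$-form by the contact condition $\alpha\wedge(d\alpha)^n\neq 0$; note that $\alpha$, $d\alpha$ and $h$ are all $T$-invariant. Define the bundle endomorphism $A$ of $\ker\alpha$ by $d\alpha(X,Y)=h(AX,Y)$; then $A$ is $h$-skew-adjoint and invertible, so $-A^2=A^{\ast}A$ is $h$-symmetric and positive definite. Let $P=\sqrt{-A^2}$ be its positive square root, which commutes with $A$, and set $J=AP^{-1}$. The standard compatible-triple computation, applied fiberwise, shows that $J^2=-\id$, that $J$ is $h$-orthogonal, and that the positive symmetric form $g(X,Y):=h(PX,Y)$ on $\ker\alpha$ satisfies $g(X,Y)=d\alpha(X,JY)$. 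Extending $g$ to $TM=\ker\alpha\oplus\RR R$ by declaring $R$ a unit vector orthogonal to $\ker\alpha$ gives a Riemannian metric on $M$; since $A$, its polar factors $P$ and $J$, and the splitting $TM=\ker\alpha\oplus\RR R$ are all built canonically from the $T$-invariant data $(\alpha,d\alpha,h)$, the metric $g$ is $T$-invariant, hence in particular invariant under the Reeb flow. As $J$ is then an almost complex structure on $\ker\alpha$ with $g(X,Y)=d\alpha(X,JY)$, the metric $g$ is adapted to $\alpha$ in the sense of Definition~\ref{def:alphaadapted}, closing the cycle.

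The only real obstacle is the linear-algebra core of $(3)\Rightarrow(1)$: checking that $J=AP^{-1}$ is genuinely an almost complex structure compatible with $d\alpha$. This is the familiar polar-decomposition argument from symplectic linear algebra, and the extra bookkeeping one must be careful about is that each construction --- $A$, its positive square root, $J$, and the orthogonal splitting --- is canonical enough to inherit the $T$-invariance of the input data, which it is.
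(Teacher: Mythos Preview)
Your proof is correct. Note, however, that the paper does not give its own proof of this proposition: it is quoted from Yamazaki~\cite{Yamazaki} and simply used as a black box. So there is no argument in the paper to compare against.

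What you have written is the standard proof one would expect. The only place worth a second glance is the compatibility identity $g(X,Y)=d\alpha(X,JY)$, and your polar-decomposition setup handles it: with $A$ $h$-skew, $P=\sqrt{-A^2}$ $h$-symmetric and commuting with $A$, and $J=AP^{-1}$, one has
\[
d\alpha(X,JY)=h(AX,AP^{-1}Y)=-h(X,A^2P^{-1}Y)=h(X,PY)=h(PX,Y)=g(X,Y),
\]
as claimed. The $T$-invariance is indeed automatic because each step is a canonical construction from $T$-invariant data.
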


\begin{ex}
Main examples of $K$-contact manifolds are Sasakian manifolds. They include contact toric manifolds of Reeb type (see below for the definition and see Theorem~5.2 of Boyer-Galicki~\cite{Boyer Galicki 2} for the existence of a Sasakian metric) and links of isolated singularities of weighted homogeneous polynomials (see Chapters~7~and~9 of \cite{Boyer Galicki} and references therein).
\end{ex}

\begin{defn}
A $(2n+1)$-dimensional contact manifold with a $T^{n+1}$-action which preserves the contact structure is called a {\em contact toric manifold} (see Lerman~\cite{Lerman}). Moreover, if the Reeb vector field of a contact form generates an $\RR$-subaction of the $T^{n+1}$-action, then the contact $T^{n+1}$-manifold is called a {\em contact toric manifold of Reeb type} (see \cite{Boyer Galicki 2}).
\end{defn}

\subsection{Deformation of Reeb vector fields}
For later use, let us state some results, Proposition~1 of Banyaga-Rukim\-bira~\cite{Banyaga Rukimbira} and Lemma~2.5 of \cite{Nozawa}, that enable us to modify a $K$-contact structure by deforming its Reeb vector field in some torus. These are $K$-contact variants of Theorem~A of Takahashi~\cite{Takahashi} for Sasakian manifolds (note that $K$-contact manifolds are defined in a different and nonstandard way in \cite{Nozawa}).
\begin{lemma}\label{lem:modification}
Let $(M,\alpha,g)$ be a compact $K$-contact manifold.
\begin{enumerate}
\item  Let $T'$ be an $S^{1}$-subgroup of $T$ whose orbits are transverse to $\ker \alpha$. Then $M$ has a $K$-contact structure $(\alpha',g')$ such that the closure of the Reeb flow of $\alpha'$ in $\Isom(M,g')$ is equal to $T'$.
\item  Let $T'$ be a torus in $\Diff(M)$ which contains $T$ and acts on $M$ preserving $\alpha$. Then $M$ has a $K$-contact structure $(\alpha',g')$ such that the closure of the Reeb flow of $\alpha'$ in $\Isom(M,g')$ is $T'$.
\end{enumerate}
\end{lemma}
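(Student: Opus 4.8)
The plan is to handle both parts by a single mechanism: in each case one first produces a vector field $X$ on $M$ such that (i) the flow of $X$ preserves $\alpha$, (ii) $\alpha(X)>0$ everywhere on $M$, and (iii) the closure of the flow of $X$ in $\Diff(M)$ equals the prescribed torus $T'$; one then rescales $\alpha$ to $\alpha':=\alpha(X)^{-1}\alpha$ and equips $\alpha'$ with an adapted metric. For the rescaling, write $f:=\alpha(X)$; from $\cL_X\alpha=0$ one gets $\iota_Xd\alpha=-df$, hence $Xf=-\iota_X\iota_Xd\alpha=0$, so $\alpha'(X)=1$, a one-line computation gives $\iota_Xd\alpha'=0$, and since any exterior product containing two factors $\alpha$ or two factors $df$ vanishes, $\alpha'\wedge(d\alpha')^n=f^{-n-1}\,\alpha\wedge(d\alpha)^n\neq 0$. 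Thus $\alpha'$ is a contact form with Reeb vector field $X$, and its Reeb flow is the flow of $X$. This is essentially the only explicit computation in the argument.

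I would construct the adapted metric $T'$-equivariantly, because I need the closure of the new Reeb flow to be exactly, not merely at least, $T'$. The torus $T'$ preserves $\alpha$ and $X$, hence $f$, hence $\alpha'$, $\ker\alpha'=\ker\alpha$, and the fibrewise symplectic form $d\alpha'|_{\ker\alpha}=f^{-1}\,d\alpha|_{\ker\alpha}$. Averaging gives a $T'$-invariant fibre metric on $\ker\alpha$, and the standard construction of a compatible almost complex structure from a fibre metric and a symplectic vector bundle structure is canonical, hence $T'$-equivariant, and yields a $T'$-invariant $J$ on $\ker\alpha$ with $d\alpha'(\cdot,J\cdot)|_{\ker\alpha}$ positive definite. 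Setting $g'|_{\ker\alpha}=d\alpha'(\cdot,J\cdot)$, $g'(X,X)=1$, $g'(X,\ker\alpha)=0$ then gives a $T'$-invariant metric adapted to $\alpha'$, so $(\alpha',g')$ is $K$-contact. Since the Reeb flow of $\alpha'$ is the flow of $X$, which lies in the closed subgroup $T'$ of $\Isom(M,g')$ and has closure $T'$ in $\Diff(M)$, its closure in $\Isom(M,g')$ is exactly $T'$. One could instead invoke Proposition~\ref{prop:Yamazaki} for the existence of some adapted metric, but then identifying the isometry group of the new structure is awkward.

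It remains to choose $X$. For part~(1), take $X$ to be the fundamental vector field of $T'=S^1$; transversality of the $T'$-orbits to $\ker\alpha$ says precisely that $\alpha(X)$ is nowhere zero, and since $M$ is connected, after possibly replacing $X$ by $-X$ we may assume $\alpha(X)>0$. Then $T'\subseteq T$ preserves $\alpha$, $X$ is $T'$-invariant, and the flow of $X$ is all of $T'$, so (i)--(iii) hold. For part~(2), since $T\subseteq T'$ the Reeb vector field $R$ of $\alpha$ lies in $\mft':=\Lie(T')$ with $\alpha(R)\equiv 1$; hence, identifying elements of $\mft'$ with their fundamental vector fields, the set $\{Y\in\mft':\alpha(Y)>0\text{ on }M\}$ is a nonempty open cone in $\mft'$ (nonempty since it contains $R$, open since $M$ is compact), and by Kronecker's theorem the set of $Y\in\mft'$ generating a dense one-parameter subgroup of $T'$ is dense in $\mft'$. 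I would pick $X$ in the intersection of these two sets, so that (i)--(iii) hold with the flow of $X$ dense in $T'$; the rescaling and metric steps then produce the desired $(\alpha',g')$.

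The main obstacle is the metric step: forcing $g'$ to be $T'$-invariant, so that the closure of the new Reeb flow is bounded above by $T'$, requires the equivariant choice of compatible almost complex structure rather than a mere existence statement; and in part~(2) one must exhibit a single $X\in\mft'$ that is at once transverse to $\ker\alpha$ (the positivity $\alpha(X)>0$) and a topological generator of $T'$.
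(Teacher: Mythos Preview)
Your proof is correct and follows the same strategy as the paper: rescale $\alpha'=\alpha(X)^{-1}\alpha$ for a suitable $X$ (the $S^1$-generator in (1), a topological generator of $T'$ close to $R$ in (2)), and then endow $\alpha'$ with an adapted metric. The only difference is that the paper dispatches the metric step by citing Proposition~\ref{prop:Yamazaki}, whereas you build the $T'$-invariant compatible $J$ and $g'$ explicitly; your concern that merely invoking Proposition~\ref{prop:Yamazaki} leaves the identity of the closure ambiguous is handled in the paper by the earlier remark that the closure torus, viewed in $\Diff(M)$, is independent of the choice of adapted metric, so once the Reeb flow of $\alpha'$ is dense in $T'$ inside $\Diff(M)$ the closure in $\Isom(M,g')$ is automatically $T'$.
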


\begin{proof}
Let us prove (1). Let $Z$ be a vector field on $M$ which generates the $T'$-action. The transversality of $Z$ to $\ker \alpha$ implies that $\alpha(Z)$ is a nowhere vanishing function. Then $\alpha'=\frac{1}{\alpha(Z)}\alpha$ is a contact form whose Reeb vector field is $Z$. Then (1) follows from Proposition~\ref{prop:Yamazaki}.

Let us prove (2). We take a fundamental vector field $Z$ of the $T'$-action sufficiently close to the Reeb vector field of $\alpha$ so that the flow generated by $Z$ is dense in $T'$. By compactness of $M$, the orbits of the flow generated by $Z$ are transverse to $\ker \alpha$. Letting $\alpha'=\frac{1}{\alpha(Z)}\alpha$, we get a contact form $\alpha'$ whose Reeb vector field is $Z$. Then Proposition~\ref{prop:Yamazaki} concludes the proof of (2). 
 \end{proof}

\begin{rem}\label{rem:transverse}
In Lemma~\ref{lem:modification} (1), if $Z$ is sufficiently close to the Reeb vector field, then the orbits of the $T'$-action are transverse to $\ker \alpha$ by compactness of $M$.
\end{rem}

\section{Transverse actions on foliated manifolds}

In this section, let us recall the definition of transverse actions on foliated manifolds introduced in \cite{GT2010}.

Let $\F$ be a foliation of a manifold $M$. By $\Xi(\F)$ we denote the space of differentiable vector fields on $M$ that are tangent to the leaves of $\F$. A vector field $X$ on $M$ is said to be {\em foliate} if for every $Y\in \Xi(\F)$ the Lie bracket $[X,Y]$ also belongs to $\Xi(\F)$. A vector field is foliate if and only if its flow maps leaves of $\F$ to leaves of $\F$, see Proposition~2.2 of Molino~\cite{Molino}. The set $L(M,\F)$ of foliate fields is the normalizer of $\Xi(\F)$ in the Lie algebra $\Xi(M)$ of vector fields on $M$ and therefore a Lie sub-algebra of $\Xi(M)$. We call the projection of a foliate field $X$ to $TM/T\F$ a {\em transverse} field. The set $l(M,\F)=L(M,\F)/\Xi(\F)$ of transverse fields is also a Lie algebra inheriting the Lie bracket from $L(M,\F)$.
\begin{defn}[{\cite[Definition 2.1]{GT2010}}]\label{def:transverseaction}
A {\em transverse action} on the foliated manifold $(M,\F)$ of a fi\-nite-dimensional Lie algebra $\mfg$ is a Lie algebra homomorphism $\mfg\to l(M,\F)$.
\end{defn}
\noindent Given a transverse action of $\mfg$, we will denote the transverse field associated to $X\in \mfg$ by $X^{\#}\in l(M,\F)$.
If $\F$ is the trivial foliation by points, this notion coincides with the usual notion of an infinitesimal action on the manifold $M$.

Let us now return to $K$-contact manifolds. Denote by $\F$ the orbit foliation of the Reeb flow on a $K$-contact manifold $(M,\alpha,g)$. This is the Riemannian foliation defined by the isometric flow generated by $R$ (see Carri\`{e}re~\cite{Carriere}). By the commutativity of $T$, the $T$-action preserves $\F$. Hence there is a canonical map
\begin{equation}\label{eq:traction}
\mfa := \mft/\RR R \longrightarrow l(M,\F).
\end{equation}
This defines a transverse action of the abelian Lie algebra $\mfa$ on $(M,\F)$.

Because $\F$ is the orbit foliation of an isometric flow, its Molino sheaf is trivial by Th\'{e}or\`{e}me~A of Molino-Sergiescu~\cite{MolSer} (i.e., $\F$ is a Killing foliation), which we may identify with $\mfa$ by Example~4.3 of~\cite{GT2010}. Note that in~\cite{GT2010} the Lie algebra $\mfa$ is called the structural Killing algebra of $\F$.

\section{Equivariant cohomology} \label{sec:eqcohom}

\subsection{$\mfg$-differential graded algebras and the Cartan model}

In this section, we recall the Cartan model of equivariant cohomology in the language of differential graded algebras.

\begin{defn}\label{defn:g*}
Let $\mfg$ be a finite-dimensional Lie algebra and $A=\bigoplus A_k$ a $\ZZ$-graded algebra. We call $A$ a $\mfg$-{\em differential graded algebra} ($\mfg$-{\em dga}) if there is a derivation $d:A\to A$ of degree $1$ and  derivations $\iota_X:A\to A$ of degree $-1$ and $L_X:A\to A$ of degree $0$ for all $X\in \mfg$ (where $\iota_X$ and $L_X$ depend linearly on $X$) such that:
\begin{enumerate}
\item $d^2=0$
\item $\iota_X^2=0$
\item $[L_X,L_Y]=L_{[X,Y]}$
\item $[L_X,\iota_Y]=\iota_{[X,Y]}$
\item $[d,L_X]=0$
\item $L_X=d\iota_X+\iota_Xd$.
\end{enumerate}
\end{defn}

\begin{ex} \label{ex:gstaraction}
An infinitesimal action of a finite-dimensional Lie algebra $\mfg$ on a manifold $M$, i.e.~a Lie algebra homomorphism $\mfg\to \Xi(M);\, X\mapsto X^{\#}$, induces a $\mfg$-dga structure on the de Rham complex $\Omega(M)$ with operators $\iota_X:=\iota_{X^{\#}}$ and $L_X:=L_{X^{\#}}$.
\end{ex}

The {\em Cartan complex} of $A$ is defined as
\begin{equation*}
C_\mfg(A):=(S(\mfg^*)\otimes A)^\mfg.
\end{equation*}
\noindent Here the superscript denotes the subspace of $\mfg$-invariant elements, i.e., those $\omega\in S(\mfg^*)\otimes A$ for which $L_X\omega=0$ for all $X\in \mfg$. The differential $d_\mfg$ of the Cartan complex $C_\mfg(A)$ is defined by
$$
(d_\mfg \omega)(X)=d(\omega(X))-\iota_X(\omega(X)),
$$
\noindent where we consider an element in $C_\mfg(A)$ as a $\mfg$-equivariant polynomial map $\mfg\to A$.
Now the equivariant cohomology of the $\mfg$-dga $A$ is defined as
\[
H_\mfg^\bullet(A):=H^\bullet(C_\mfg(A),d_\mfg).
\]
\noindent There is a natural $S(\mfg^*)^\mfg$-algebra structure on $H^\bullet_\mfg(A)$.

\subsection{Equivariant cohomology of Lie group actions}

Let $G$ be a compact connected Lie group acting on a manifold $M$. Let $\mfg$ be the Lie algebra of right-invariant vector fields on $G$. Then the $G$-action induces a Lie algebra homomorphism $\mfg\to \Xi(M)$, and hence the structure of a $\mfg$-dga on the algebra of differential forms $\Omega(M)$, see Example~\ref{ex:gstaraction}. The equivariant cohomology of the $G$-action is
\[
H^\bullet_G(M)=H_\mfg^\bullet(\Omega(M)).
\]
\noindent This so-called Cartan model of equivariant cohomology is isomorphic to the Borel model (which is defined for more general spaces than manifolds)
\[
H^\bullet_G(M)=H^\bullet(EG\times_G M),
\]
where $EG$ is a contractible space on which G acts freely. Unless otherwise stated, cohomology is taken with real coefficients.

The $G$-action is called {\em equivariantly formal} if
\[
H^\bullet_G(M)\cong S(\mfg^*)^G \otimes H^\bullet(M)
\]
\noindent as graded $S(\mfg^*)^G$-modules.

\subsection{Equivariant basic cohomology}\label{sec:transverseactions}

Let $\F$ be a foliation on a manifold $M$. A differential form $\omega\in \Omega(M)$ is {\em $\F$-basic} if $\iota_X\omega=0$ and $L_X\omega=0$ for all $X\in \Xi(\F)$. We denote the algebra of basic differential forms by $\Omega(M,\F)$. For $X\in l(M,\F)$, $\iota_X$ and $L_X$ are well-defined derivations on $\Omega(M,\F)$.

\begin{prop}[{\cite[Proposition~3.12]{GT2010}}]
A transverse action of a finite-dimen\-sional  Lie algebra $\mfg$ on a foliated manifold $(M,\F)$ induces the structure of a $\mfg$-dga on $\Omega(M,\F)$.
\end{prop}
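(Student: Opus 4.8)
The plan is to verify directly that the six axioms of a $\mfg$-dga (Definition~\ref{defn:g*}) hold for $\Omega(M,\F)$ equipped with the restriction of the de Rham differential $d$ and with the operators $\iota_{X^{\#}}$ and $L_{X^{\#}}$ for $X\in\mfg$, where $X^{\#}\in l(M,\F)$ is the transverse field associated to $X$ by the given transverse action. The starting point is the observation, recalled just before the statement, that for $X\in l(M,\F)$ the Cartan operators $\iota_X$ and $L_X$ are well-defined derivations on the algebra $\Omega(M,\F)$ of basic forms: if $\omega$ is $\F$-basic then so are $\iota_{X^{\#}}\omega$ and $L_{X^{\#}}\omega$. (This is where foliateness of a representative of $X^{\#}$ is used; the result is independent of the choice of representative because two representatives differ by an element of $\Xi(\F)$, which annihilates basic forms.) Likewise $d$ preserves $\Omega(M,\F)$, since $d$ commutes with $\iota_Y$ and $L_Y$ for $Y\in\Xi(\F)$.

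First I would fix, for each $X$ in a basis of $\mfg$, a foliate representative $\widetilde{X}\in L(M,\F)$ of $X^{\#}$, and extend linearly; note that the induced operators $\iota_{X^{\#}},L_{X^{\#}}$ on $\Omega(M,\F)$ do not depend on these choices. Then axioms (1), (2), (5), (6) are inherited verbatim from the de Rham complex $\Omega(M)$, since they are identities between operators that do not involve the Lie bracket of $\mfg$: $d^2=0$, $\iota_{\widetilde X}^2=0$, $[d,L_{\widetilde X}]=0$ and the Cartan magic formula $L_{\widetilde X}=d\iota_{\widetilde X}+\iota_{\widetilde X}d$ all hold on $\Omega(M)$ and hence restrict to $\Omega(M,\F)$. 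For axioms (3) and (4), the subtlety is that $[\widetilde X,\widetilde Y]$ need not represent $[X,Y]^{\#}$ on the nose — it does so only modulo $\Xi(\F)$, because the map $\mfg\to l(M,\F)$ is a Lie algebra homomorphism but its chosen lift $\mfg\to L(M,\F)$ need not be. So on $\Omega(M)$ one has $[L_{\widetilde X},L_{\widetilde Y}]=L_{[\widetilde X,\widetilde Y]}$ and $[L_{\widetilde X},\iota_{\widetilde Y}]=\iota_{[\widetilde X,\widetilde Y]}$, and it remains to observe that $[\widetilde X,\widetilde Y]-\widetilde{[X,Y]}\in\Xi(\F)$, so that $L_{[\widetilde X,\widetilde Y]}$ and $L_{\widetilde{[X,Y]}}$ (resp.\ $\iota_{[\widetilde X,\widetilde Y]}$ and $\iota_{\widetilde{[X,Y]}}$) agree as operators \emph{on basic forms}. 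This gives (3) and (4) on $\Omega(M,\F)$.

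The only genuine point requiring care — the main obstacle, such as it is — is precisely this passage from operators on $\Omega(M)$ to operators on $\Omega(M,\F)$ in the presence of the ambiguity in lifting $X^{\#}$: one must check that every operator appearing in the axioms descends unambiguously to $\Omega(M,\F)$ and that the Lie-bracket-dependent axioms (3), (4) only need to hold after restriction, where the $\Xi(\F)$-indeterminacy disappears. Once this bookkeeping is in place the proof is complete; the result is in any case already recorded as \cite[Proposition~3.12]{GT2010}, so I would present the argument concisely and refer the reader there for details.
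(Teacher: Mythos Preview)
Your argument is correct and is the standard verification; the present paper does not give its own proof but simply quotes the result from \cite[Proposition~3.12]{GT2010}, so there is nothing to compare against beyond noting that your sketch is exactly the kind of check that reference carries out. One small slip: you write that $d$ ``commutes with $\iota_Y$ and $L_Y$'' for $Y\in\Xi(\F)$, but of course $[d,\iota_Y]=L_Y$, not zero; the conclusion that $d$ preserves $\Omega(M,\F)$ is still right, via $\iota_Y d\omega = L_Y\omega - d\iota_Y\omega = 0$ and $L_Y d\omega = dL_Y\omega = 0$ for basic $\omega$.
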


\begin{defn}[{\cite[Section~3.6]{GT2010}}]\label{def:eqformaltraction}
The equivariant basic cohomology of a transverse $\mfg$-action on $(M,\F)$ is defined as
\[
H^\bullet_\mfg(M,\F)=H^\bullet_\mfg(\Omega(M,\F)).
\]
\noindent The $\mfg$-action is called {\em equivariantly formal} if
 \[
H^\bullet_\mfg(M,\F)\cong S(\mfg^*)^\mfg\otimes H^\bullet(M,\F)
\]
\noindent as graded $S(\mfg^*)^\mfg$-modules.
\end{defn}
\noindent The notion of equivariant formality of transverse actions is analogous to that of Lie group actions.

\section{Equivariantly formal and Cohen-Macaulay actions}\label{sec:eqformalandCM}

It is proven in Proposition~5.1 of Franz-Puppe~\cite{FrPu} (see also Definition~(4.1.5) of Allday-Puppe~\cite{allday}) that for any action of a torus $T$ on a compact manifold $M$ the (Krull) dimension of the $S(\mft^*)$-module $H^{\bullet}_T(M)$ is equal to the dimension of a maximal isotropy algebra (i.e., the Lie algebra of an isotropy group).

\begin{defn}\label{def:CMtorusactions}
The $T$-action is said to be {\it Cohen-Macaulay} if $H^{\bullet}_T(M)$ is a Cohen-Macaulay module over $S(\mft^*)$, i.e., if $\dim_{S(\mft^*)}H^{\bullet}_T(M)$ equals $\depth_{S(\mft^*)}H^{\bullet}_T(M)$.
\end{defn}
\noindent The depth of a finitely generated graded $S(\mft^*)$-module is always bounded from above by its dimension.
\begin{ex} \label{ex:locallyfree} Any locally free torus action is Cohen-Macaulay. In fact, the dimension of a maximal isotropy algebra is zero, which forces the depth of $H^{\bullet}_T(M)$ to be zero as well.
\end{ex}
If the $T$-action is equivariantly formal, then $H^\bullet_T(M)$ is a free $S(\mft^*)$-module and in particular Cohen-Macaulay. In fact, Cohen-Macaulay actions were introduced in~\cite{GT2009} as a generalization of equivariantly formal actions. More precisely, we have

\begin{prop}[{\cite[Proposition~6.2]{GT2009}}]\label{prop:formaleqandCM} The $T$-action is equivariantly formal if and only if it is Cohen-Macaulay and has fixed points.
\end{prop}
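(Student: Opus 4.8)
The plan is to play the two algebraic characterisations off against each other through the Franz--Puppe dimension formula quoted just above the statement: for any torus action on a compact manifold, $\dim_{S(\mft^*)}H^\bullet_T(M)$ equals the dimension of a maximal isotropy algebra. Throughout I use that $H^\bullet_T(M)$ is a finitely generated graded $S(\mft^*)$-module (compactness of $M$), that $S(\mft^*)^T=S(\mft^*)$ since $T$ is a torus, and that $S(\mft^*)$ is a polynomial ring, hence Cohen--Macaulay of Krull dimension and depth $\dim T$.

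For the forward implication, suppose the action is equivariantly formal. Then $H^\bullet_T(M)\cong S(\mft^*)\otimes H^\bullet(M)$ is a free $S(\mft^*)$-module, of positive rank because $H^0(M)\neq 0$; in particular it is Cohen--Macaulay, and being free of positive rank over $S(\mft^*)$ its Krull dimension is $\dim S(\mft^*)=\dim T$. By the Franz--Puppe formula some point $x\in M$ then has isotropy algebra of dimension $\dim T$; since every isotropy algebra is a subalgebra of $\mft$, this forces the isotropy algebra at $x$ to be all of $\mft$, so the isotropy group $T_x$ has full dimension and, $T$ being connected, $T_x=T$. Thus $x$ is a fixed point.

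For the converse, assume the action is Cohen--Macaulay and has a fixed point $x$. The isotropy group at $x$ is all of $T$, so $\mft$ is a maximal isotropy algebra, and Franz--Puppe gives $\dim_{S(\mft^*)}H^\bullet_T(M)=\dim T=\dim S(\mft^*)$. Cohen--Macaulayness upgrades this to $\depth_{S(\mft^*)}H^\bullet_T(M)=\dim T$, whence the graded Auslander--Buchsbaum formula $\operatorname{pd}_{S(\mft^*)}H^\bullet_T(M)+\depth_{S(\mft^*)}H^\bullet_T(M)=\dim S(\mft^*)$ forces $\operatorname{pd}_{S(\mft^*)}H^\bullet_T(M)=0$, so $H^\bullet_T(M)$ is a graded projective, hence graded free, $S(\mft^*)$-module. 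It remains to deduce equivariant formality. I would run the Leray--Serre spectral sequence of the Borel fibration $M\hookrightarrow M_T\to BT$: since $T$ is connected its $E_2$-page is the free $S(\mft^*)$-module $S(\mft^*)\otimes H^\bullet(M)$, and the differentials are $S(\mft^*)$-linear. Because $E_2$ is free, the first nonzero differential (if any) has nonzero, hence non-torsion, image in a free module, so it strictly lowers the $S(\mft^*)$-rank, and no later differential can restore it; thus the rank over $S(\mft^*)$ of the abutment $H^\bullet_T(M)$ is at most $\dim_\RR H^\bullet(M)$, with equality exactly when the sequence degenerates at $E_2$. Freeness lets one compute that rank as $\dim_\RR\!\big(H^\bullet_T(M)\otimes_{S(\mft^*)}\RR\big)$, and a degreewise comparison identifies it with $\dim_\RR H^\bullet(M)$; hence the sequence degenerates, the edge map $H^\bullet_T(M)\to H^\bullet(M)$ is onto, and lifting a homogeneous $\RR$-basis of $H^\bullet(M)$ through it yields a graded $S(\mft^*)$-basis of $H^\bullet_T(M)$ by Nakayama. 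This is equivariant formality.

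The main obstacle is precisely this last step: passing from bare freeness of $H^\bullet_T(M)$ over $S(\mft^*)$ to the genuine graded isomorphism $H^\bullet_T(M)\cong S(\mft^*)\otimes H^\bullet(M)$. The commutative-algebra inputs (Franz--Puppe, Auslander--Buchsbaum, graded projective $\Rightarrow$ graded free) are standard and quick; the care lies in the spectral-sequence bookkeeping, namely showing that the $S(\mft^*)$-rank of $H^\bullet_T(M)$ can never exceed $\dim_\RR H^\bullet(M)$ and matching graded pieces so that equality of ranks forces full degeneration at $E_2$.
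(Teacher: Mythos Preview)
The paper does not prove this proposition; it is quoted verbatim from \cite[Proposition~6.2]{GT2009} with no argument supplied. So there is no ``paper's own proof'' to compare against, and your write-up stands on its own.

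Your forward implication is clean and correct. In the converse direction, the passage from Cohen--Macaulay with a fixed point to freeness of $H^\bullet_T(M)$ via Franz--Puppe, Hilbert's syzygy theorem, and graded Auslander--Buchsbaum is also correct and is the standard route.

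The remaining step---free $\Rightarrow$ equivariantly formal in the sense of a graded $S(\mft^*)$-isomorphism $H^\bullet_T(M)\cong S(\mft^*)\otimes H^\bullet(M)$---is, as you say, the only delicate point, and your sketch does not quite close it. Your Serre spectral sequence argument correctly shows that if the sequence fails to degenerate at $E_2$ then $\operatorname{rank}_{S(\mft^*)}H^\bullet_T(M)<\dim_\RR H^\bullet(M)$: the first nonzero differential hits a free module, hence has torsion-free and therefore positive-rank image. What is missing is the reverse inequality $\operatorname{rank}_{S(\mft^*)}H^\bullet_T(M)\geq \dim_\RR H^\bullet(M)$. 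Your phrase ``a degreewise comparison identifies it with $\dim_\RR H^\bullet(M)$'' does not supply this: from $P_t(H^\bullet_T(M))\leq P_t(S(\mft^*))\cdot P_t(M)$ and $P_t(H^\bullet_T(M))=P_t(S(\mft^*))\cdot Q(t)$ one cannot conclude $Q(t)\leq P_t(M)$ coefficientwise (multiply through by $(1-t^2)^{\dim T}$ and the inequality is lost), let alone equality. The clean fix is the Eilenberg--Moore spectral sequence $\operatorname{Tor}^{S(\mft^*)}_{*}(\RR,H^\bullet_T(M))\Rightarrow H^\bullet(M)$ for the Borel fibration: when $H^\bullet_T(M)$ is free the higher Tor vanish, the sequence collapses, and one gets $H^\bullet(M)\cong H^\bullet_T(M)\otimes_{S(\mft^*)}\RR$ as graded vector spaces, giving exactly the missing rank equality. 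Alternatively, one may simply cite that for torus actions ``$H^\bullet_T(M)$ free over $S(\mft^*)$'' and ``Serre spectral sequence degenerates at $E_2$'' are equivalent formulations of equivariant formality (see e.g.\ \cite{allday} or \cite{GS1999}); this is the fact your argument is reaching for.
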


\begin{prop}[{\cite[Remark~2]{GT2009}}]\label{prop:commutingaction} Let $T'\subset T$ be a subtorus maximal among those subtori acting on $M$ locally freely. Then $H^\bullet_T(M)\cong H^\bullet_{T/T'}(M/T')$ as graded rings. The $T$-action on $M$ is Cohen-Macaulay if and only if the $T/T'$-action on $M/T'$ is equivariantly formal.
\end{prop}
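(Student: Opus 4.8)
The plan is to establish the ring isomorphism first, transport both module structures onto the common ring, and then reduce the Cohen--Macaulay assertion to a comparison of homogeneous systems of parameters over $S(\mft^*)$ and over a subalgebra. Write $\mft:=\Lie(T)$, $\mft':=\Lie(T')$, $Q:=T/T'$ and $\mathfrak q:=\mft/\mft'=\Lie(Q)$, so $S(\mathfrak q^*)$ is the subalgebra of $S(\mft^*)$ of polynomials vanishing on $\mft'$. Now fix a contractible free $T$-space $ET$ and set $Y:=ET\times_{T'}M$. Restricting the $T$-action to $T'$ identifies $Y$ with the homotopy quotient of the (locally free) $T'$-action on $M$; since every $T'$-isotropy group is finite, the $Q$-equivariant projection $Y\to M/T'$, $[e,m]\mapsto[m]$, is a fibration whose fibres are classifying spaces of finite groups, hence $\mathbb R$-acyclic, so $H^\bullet(M/T';\mathbb R)\cong H^\bullet(Y;\mathbb R)$. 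The residual $Q$-action on $Y$ is free, because $Y$ maps $Q$-equivariantly onto $ET/T'$, on which $Q$ acts freely ($T$ being free on $ET$); hence $Y/Q=ET\times_TM$ and $EQ\times_QY\simeq Y/Q$, so $H^\bullet_Q(Y)=H^\bullet(ET\times_TM)=H^\bullet_T(M)$, while comparing the Serre spectral sequences over the simply connected $BQ$ of $EQ\times_QY\to BQ$ and $EQ\times_Q(M/T')\to BQ$ gives $H^\bullet_Q(M/T')\cong H^\bullet_Q(Y)$. Concatenating these ring isomorphisms yields $H^\bullet_T(M)\cong H^\bullet_Q(M/T')$, and tracing the composite $ET\times_TM\to BT\to BQ$ shows the isomorphism is $S(\mathfrak q^*)$-linear for the inclusion $S(\mathfrak q^*)\hookrightarrow S(\mft^*)$. (An alternative, purely algebraic route: a locally free action admits a connection form, whence $H_\mft(\Omega(M))\cong H_{\mathfrak q}\bigl(\Omega(M)_{{\rm bas}\,\mft'}\bigr)$, and $\Omega(M)_{{\rm bas}\,\mft'}$ with its residual $\mathfrak q$-structure computes $H^\bullet_Q(M/T')$.)

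Next I would compare Krull dimensions. By the dimension formula quoted just before Definition~\ref{def:CMtorusactions}, $\dim_{S(\mft^*)}H^\bullet_T(M)=k$ with $k:=\max_{x\in M}\dim\mft_x$. Since $T'$ acts locally freely, $\mft_x\cap\mft'=0$, so the $Q$-isotropy algebra of $[x]\in M/T'$ is the image of $\mft_x$ in $\mathfrak q$ and has dimension $\dim\mft_x$; applying the same formula to the $Q$-action on the compact space $M/T'$ — equivalently, projecting the support $\bigcup_x\mft_x$ of $H^\bullet_T(M)$ along $\mft\to\mathfrak q$ (cf.~\cite{FrPu},\cite{allday}) — gives $\dim_{S(\mathfrak q^*)}H^\bullet_Q(M/T')=k$ as well. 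Now $H^\bullet_Q(M/T')$ is finitely generated over $S(\mathfrak q^*)$, so one may choose a homogeneous system of parameters $\theta_1,\dots,\theta_k\in S(\mathfrak q^*)_+$; then $H^\bullet_T(M)/(\theta_1,\dots,\theta_k)$ is finite-dimensional, so $\theta_1,\dots,\theta_k$ is also a homogeneous system of parameters for $H^\bullet_T(M)$ over $S(\mft^*)$. Since whether this sequence is regular on the module $H^\bullet_T(M)=H^\bullet_Q(M/T')$ depends only on the module and the elements — not on which polynomial ring we view them in — the standard fact that a finitely generated graded module is Cohen--Macaulay exactly when some (equivalently, every) homogeneous system of parameters is a regular sequence shows that the $T$-action on $M$ is Cohen--Macaulay if and only if the $Q$-action on $M/T'$ is.

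Finally I would upgrade ``Cohen--Macaulay'' to ``equivariantly formal'' using maximality of $T'$, namely by showing $\dim T'=\dim T-k$. If $\dim\mft'<\dim\mft-k$, then each of the finitely many subspaces $\mft'+\mft_x$ is proper in $\mft$ (as $\dim(\mft'+\mft_x)=\dim\mft'+\dim\mft_x\le\dim\mft'+k<\dim\mft$, using $\mft'\cap\mft_x=0$), so there is a rational line $\ell\not\subset\mft'$ avoiding all of them, and $\exp(\mft'+\ell)$ is then a strictly larger locally free subtorus, contradicting maximality. Hence $\dim Q=k$; choosing $x$ with $\dim\mft_x=k$, the $Q$-isotropy algebra of $[x]$ has dimension $k=\dim\mathfrak q$, so $[x]$ is a $Q$-fixed point in $M/T'$. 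By Proposition~\ref{prop:formaleqandCM} applied to $Q$ acting on $M/T'$, ``$Q$ Cohen--Macaulay on $M/T'$'' then coincides with ``$Q$ equivariantly formal on $M/T'$'', which together with the previous paragraph completes the proof.

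I expect the ring isomorphism $H^\bullet_T(M)\cong H^\bullet_Q(M/T')$ — and in particular its compatibility with the $S(\mathfrak q^*)$-module structures — to be the main technical obstacle: one must be careful with models of classifying spaces, with the fact that $M/T'$ is only an orbifold rather than a manifold, and with tracking the $S(\mathfrak q^*)$-action through the whole chain of quasi-isomorphisms. The remaining ingredients are the already quoted dimension formula, an elementary genericity argument over the rational points of $\mft$, and standard commutative algebra.
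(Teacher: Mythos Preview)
The paper does not actually supply a proof of this proposition: it is quoted verbatim as \cite[Remark~2]{GT2009} and used as a black box, so there is no in-paper argument to compare your proposal against.

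That said, your proposal is a correct reconstruction of the statement. The chain of isomorphisms via $Y=ET\times_{T'}M$ is the standard way to see $H^\bullet_T(M)\cong H^\bullet_{T/T'}(M/T')$ for a locally free subtorus, and your observation that the isomorphism is $S(\mathfrak q^*)$-linear is exactly what is needed. The commutative-algebra step---choosing a homogeneous system of parameters inside $S(\mathfrak q^*)$ and noting that regularity of a sequence is intrinsic to the module---is the clean way to transfer the Cohen--Macaulay property between the two module structures. Your maximality argument (avoiding the finitely many subspaces $\mft'+\mft_x$ by a rational line) is precisely why the $Q$-action has a fixed point, so that Proposition~\ref{prop:formaleqandCM} closes the loop.

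Two minor caveats worth flagging, both of which you already anticipate. First, the dimension formula you invoke for the $Q$-action on $M/T'$ is stated in the paper for compact manifolds, whereas $M/T'$ is only an orbifold; the result is still valid (it holds for finite-dimensional $G$-CW complexes with finitely generated equivariant cohomology, cf.~\cite{allday,FrPu}), but you should cite a version general enough to cover orbifolds rather than the manifold statement. Second, your claim that the $Q$-isotropy algebra at $[x]$ equals the image of $\mft_x$ uses that $T'$-orbits are dense in $T$-orbits of the same dimension modulo $T'$; more carefully, the $Q$-isotropy at $[x]$ is $(\mft_x+\mft')/\mft'$, which has dimension $\dim\mft_x$ exactly because $\mft_x\cap\mft'=0$. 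You state this correctly, but it is worth making explicit that the maximum over $x$ is attained on both sides simultaneously.
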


\section{The contact moment map and equivariant cohomology} \label{sec:main}
Let $(M,\alpha,g)$ be a compact connected $K$-contact manifold with Reeb vector field $R$. Let $T$ be the closure of the Reeb flow in $\Isom (M,g)$. Then $T$ is a torus with $R\in \mft = \Lie(T)$. Denote by $C$ the union of the closed Reeb orbits. Clearly, $C$ coincides with the union of all one-dimensional $T$-orbits. We recall

\begin{defn}
The {\em contact moment map} of $(M,\alpha,g)$ is given by
\[
\Phi:M\to \mft^*;\, \Phi(p)(X)=\alpha_p(X^{\#}_p).
\]
\noindent For each $X\in \mft$, we may consider the $X$-component $\Phi^X(p)=\Phi(p)(X)$ of $\Phi$.
\end{defn}

\begin{rem} The $T$-equivariant differential $d_T\alpha\in C^2_\mft(\Omega(M))$ of the contact form $\alpha$ is  given by
\[
(d_T\alpha)(X)=d\alpha - \iota_{X^{\#}}\alpha
\]
\noindent  where $X\in \mft$. From this point of view, the (negative of the) contact moment map $\Phi(X)=\iota_{X^{\#}}\alpha$ can be viewed as the $\big(S(\mft^*)\otimes C^{\infty}(M)\big)$-part of an equivariantly exact extension of the exact form $d\alpha$. This is comparable to the case of a Hamiltonian action on a symplectic manifold, in which the moment map is the $\big(S(\mft^*)\otimes C^{\infty}(M)\big)$-part of an equivariantly closed extension of the symplectic form, see for example Proposition~VI.2.1 of Audin~\cite{Audin}.
\end{rem}

\begin{prop} \label{prop:momentmorsebott} For generic $X$ in $\mft$ (as defined in the proof), the function $\Phi^X$ is a $T$-invariant Morse-Bott function with critical set $C$.
\end{prop}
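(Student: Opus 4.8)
The plan is to analyze $\Phi^X$ along $T$-orbits and use the adapted metric to compute its Hessian at critical points.

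\medskip

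\textbf{Step 1: Identify the critical set.} First I would compute $d\Phi^X$. Since $\Phi^X(p) = \alpha_p(X^\#_p)$ and $\alpha$ is $T$-invariant, the Cartan formula gives $d(\iota_{X^\#}\alpha) = L_{X^\#}\alpha - \iota_{X^\#}d\alpha = -\iota_{X^\#}d\alpha$. Hence $p$ is critical for $\Phi^X$ if and only if $\iota_{X^\#_p}d\alpha_p = 0$, i.e.\ $X^\#_p \in \ker(d\alpha_p) = \RR R_p$ (since $d\alpha$ is nondegenerate on $\ker\alpha$ and $R$ spans its kernel in $T_pM$). So $X^\#_p$ is a multiple of $R_p$ at every critical point. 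Because $R\in\mft$ and $T$ is a torus, $X^\#_p \in \RR R_p$ means that the closure of the one-parameter subgroup generated by $X$, restricted to the orbit through $p$, coincides with the Reeb orbit closure there; more precisely $p$ lies on a closed Reeb orbit or, after choosing $X$ generically, the condition $X^\#_p\in\RR R_p$ forces the $T$-orbit through $p$ to be one-dimensional, hence $p\in C$. This is where ``generic'' enters: I would choose $X$ so that $\RR R$ and $\RR X$ together span a dense subgroup of $T$ (equivalently, the isotropy weights are ``in general position''), guaranteeing that the only points with $X^\#_p\in\RR R_p$ are those on one-dimensional $T$-orbits. Conversely, if $p\in C$ then the orbit is the Reeb orbit, $X^\#$ is tangent to it, hence proportional to $R$, and $\iota_{X^\#}d\alpha = 0$; so $\Crit(\Phi^X) = C$ exactly.

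\medskip

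\textbf{Step 2: $T$-invariance.} This is immediate: $\Phi$ is $T$-equivariant for the coadjoint action, which is trivial since $T$ is abelian, so $\Phi^X$ is $T$-invariant; equivalently $L_{Y^\#}\Phi^X = \iota_{Y^\#}d\Phi^X = -\iota_{Y^\#}\iota_{X^\#}d\alpha = d\alpha(X^\#,Y^\#) = 0$ because both $X^\#,Y^\#$ lie in the span of the tangent spaces to $T$-orbits, on which $d\alpha$ restricts compatibly with the abelian structure --- more cleanly, $[X^\#,Y^\#]=0$ and $\iota_{X^\#}\iota_{Y^\#}d\alpha = L_{X^\#}(\iota_{Y^\#}\alpha) - \iota_{Y^\#}L_{X^\#}\alpha = 0$.

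\medskip

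\textbf{Step 3: Morse--Bott along $C$.} The main work is showing $C$ is a nondegenerate critical submanifold. Each component of $C$ is a closed Reeb orbit $\gamma$, a circle; I must show the Hessian of $\Phi^X$ is nondegenerate on the normal bundle $\nu\gamma$. Fix $p\in\gamma$. The normal space to $\gamma$ at $p$ is naturally $\ker\alpha_p$ (after splitting off $\RR R_p = T_p\gamma$). On $\ker\alpha_p$ I would compute the Hessian using that $d\Phi^X = -\iota_{X^\#}d\alpha$: for $v,w\in\ker\alpha_p$ extended to foliate/$T$-invariant-ish vector fields, $\Hess_p\Phi^X(v,w) = v(d\Phi^X(W)) = -v(d\alpha(X^\#,W))$, which at a critical point reduces to $-d\alpha([v\text{-direction}], \ldots)$ --- concretely, writing $X^\#$ near $p$ and using that its linearization at $p$ acts on $\ker\alpha_p$ as an infinitesimal isometry commuting with $J$, one gets $\Hess_p\Phi^X(v,w) = d\alpha\big((A_X)v, w\big)$ where $A_X = \nabla X^\#|_p$ restricted to $\ker\alpha_p$ is the skew-symmetric (w.r.t.\ $g$) endomorphism coming from the isotropy representation. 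Since $g(u,u') = d\alpha(u, Ju')$ and $A_X$ commutes with $J$, $A_X$ is (conjugate to) a block-diagonal sum of rotations, and nondegeneracy of $\Hess_p\Phi^X$ is equivalent to $A_X$ having no zero eigenvalue on $\ker\alpha_p$ --- which is again exactly the genericity condition on $X$ (the isotropy weights of the $T$-action on $\nu\gamma$ must be nonzero on $X$, for every component $\gamma$; there are finitely many such weights, so a generic $X$ avoids all the corresponding hyperplanes in $\mft$). Thus I would \emph{define} ``generic'' in the proof to mean: $X$ avoids the finite union of hyperplanes $\{Y : \langle \text{isotropy weight},Y\rangle = 0\}$ ranging over all weights of the $T$-representation on normal spaces of one-dimensional orbits, together with $\RR R$ being a codimension-one condition recovered from requiring $X\notin$ those hyperplanes and the span of $R,X$ dense.

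\medskip

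\textbf{Expected main obstacle.} The routine-but-delicate part is Step 3: rigorously identifying the Hessian of $\Phi^X$ on the normal bundle with the endomorphism $A_X = \frac{d}{dt}\big|_0 (d\exp(tX))_p$ and checking it is nondegenerate precisely under the weight condition. One must be careful that the critical submanifold $C$ may have several circle components with different isotropy representations, and that the linearized flow of $X^\#$ at $p$ genuinely acts by something conjugate (via $J$-compatibility with $g$) to a sum of planar rotations with nonzero angular speeds for generic $X$. The cleanest route is to pass to $T$-equivariant normal form near $\gamma$ (a tubular neighborhood $T\times_{T_p}\ker\alpha_p$-type model, using that $T_p$ acts on $\ker\alpha_p$ with weights) and read off $\Phi^X$ and its Hessian there; the linear-algebra fact that a skew-symmetric operator commuting with a complex structure is nondegenerate iff none of its weights kill $X$ then finishes it.
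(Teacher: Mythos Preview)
Your Step~1 is essentially the paper's argument: compute $d\Phi^X=-\iota_{X^\#}d\alpha$ and use nondegeneracy of $d\alpha$ on $T_pM/\RR R_p$ to identify the critical set with $\{p\mid X^\#_p\in\RR R_p\}$. The paper's genericity condition is cleaner than yours, though: set $\tilde\mft_p=\{X\in\mft\mid X^\#_p\in\RR R_p\}=\mft_p\oplus\RR R$; since there are only finitely many isotropy algebras, there are only finitely many distinct $\tilde\mft_p$, and one takes $X\notin\bigcup_{\tilde\mft_p\neq\mft}\tilde\mft_p$. Your phrase ``$\RR R$ and $\RR X$ together span a dense subgroup of $T$'' is not the right condition (and is problematic already because the Reeb flow is dense in $T$ by definition, so adding $X$ changes nothing); what you need is precisely that $X$ avoid the proper subspaces $\tilde\mft_p$.

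There is a genuine gap in Step~3. You write ``Each component of $C$ is a closed Reeb orbit $\gamma$, a circle'' and then ``The normal space to $\gamma$ at $p$ is naturally $\ker\alpha_p$.'' This is false in general: $C$ is the union of one-dimensional $T$-orbits, but a connected component of $C$ can be a higher-dimensional submanifold foliated by closed Reeb orbits. (The paper itself later constructs, in its discussion of circle bundles over Hirzebruch surfaces, $K$-contact $5$-manifolds whose components of $C$ are $3$-dimensional lens spaces; and in rank one, $C=M$.) Consequently the normal space to the component $B\ni p$ is only the orthogonal complement of $T_pB\cap\ker\alpha_p$ inside $\ker\alpha_p$, not all of $\ker\alpha_p$. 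Your isotropy-weight strategy does adapt: the identity component of $T_p$ acts on $\nu_pB$ with nonzero weights, and nondegeneracy of the Hessian on $\nu_pB$ amounts to $X$ avoiding the finitely many weight-kernels; but the write-up must be redone with the correct normal bundle. For what it is worth, the paper does not carry out this Hessian computation either; it simply cites Rukimbira for the nondegeneracy.
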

\noindent This proposition is well-known, see for example Section~2 of~\cite{Rukimbira2}. For the convenience of the reader we include a proof that the critical set is exactly $C$. For the nondegeneracy of the Hessian in normal directions, see Lemma~1.(2) of~\cite{Rukimbira2}.
\begin{proof}
If the rank of $(M,\alpha)$ is one, then $\mft$ is spanned by the Reeb vector field, and the function $\Phi^R$ is constant. Because in this case $C=M$ the statement follows trivially for all $X$ in $\mft$ (and we call any element $X$ in $\mft$ generic).

Assume that the rank of $(M,\alpha)$ is greater than one.  For each $p\in M$, let $\tilde{\mft}_p:=\{X\in \mft\mid X^{\#}_p\in \RR R_p\}$. Clearly, $\tilde{\mft}_p=\mft_p\oplus \RR R$, so there are only finitely many distinct subspaces $\tilde{\mft}_p\subset \mft$. We have $\tilde{\mft}_p=\mft$ if and only if $p$ is contained in a closed Reeb orbit. We say that $X$ in $\mft$ is generic if
\[
X\notin \bigcup_{p:\tilde{\mft}_p\neq \mft} \tilde{\mft}_p.
\]
\noindent We claim that if $X$ is generic, then the critical set of $\Phi^X$  is  $C$. The following argument is due to Lemma~2.1 of \cite{Banyaga Rukimbira}. We have
\[
(d\Phi^X)_p(v)=-(d\alpha)_p(X^{\#}_p,v)
\]
\noindent for all $v\in T_pM$. Because $(d\alpha)_p$ is nondegenerate on $T_pM/\RR R_p$, the critical set of $\Phi^X$ is equal to $\{p \in M \mid X^{\#}_p\in \RR R_p\}$. By definition of $X$, this set is $C$.
 \end{proof}

\begin{cor}[Banyaga~\cite{Banyaga}, Rukimbira~\cite{Rukimbira}]\label{cor:C}
The Reeb flow of $\alpha$ has closed orbits, i.e., $C\neq \emptyset$.
\end{cor}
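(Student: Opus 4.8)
The plan is to apply Proposition~\ref{prop:momentmorsebott} for a generic $X\in\mft$ and observe that the resulting conclusion forces the critical set $C$ to be nonempty. Concretely, if the rank of $(M,\alpha)$ is one, then $C=M$ and there is nothing to prove. So assume the rank is greater than one, pick a generic $X\in\mft$ in the sense of the proof above, and consider the $T$-invariant Morse-Bott function $\Phi^X\colon M\to\RR$ whose critical set equals $C$.

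Now I would use compactness: $M$ is compact, so the continuous function $\Phi^X$ attains its minimum (equivalently its maximum) at some point $p\in M$. Any point where a differentiable function attains an extremum is a critical point, so $p$ lies in the critical set of $\Phi^X$, which by Proposition~\ref{prop:momentmorsebott} is exactly $C$. Hence $C\neq\emptyset$, and since $C$ is precisely the union of the closed Reeb orbits, the Reeb flow has at least one closed orbit.

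I do not anticipate any real obstacle here — the entire content has been front-loaded into Proposition~\ref{prop:momentmorsebott}, and the corollary is just the elementary fact that a continuous function on a nonempty compact space has an extremum, together with the elementary fact that an extremum of a differentiable function is a critical point. The only point requiring a word is the rank-one case, which is disposed of by the trivial observation that $C=M$ there; once that is noted, the argument is uniform. Thus the proof is essentially one sentence: $\Phi^X$ attains a minimum on the compact manifold $M$, and that minimum point must belong to $\Crit(\Phi^X)=C$.
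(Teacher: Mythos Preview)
Your proposal is correct and is exactly the argument the paper has in mind: the corollary is stated without proof immediately after Proposition~\ref{prop:momentmorsebott}, and the implicit reasoning is precisely that a smooth function on a compact manifold must have a critical point, so $C=\Crit(\Phi^X)\neq\emptyset$. Your handling of the rank-one case is also consistent with the paper's treatment in the proof of Proposition~\ref{prop:momentmorsebott}.
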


In the following we apply the theory presented in \cite{GT2009} and~\cite{GT2010} to Proposition~\ref{prop:momentmorsebott}.

\begin{thm}\label{thm:actionisCM} The $T$-action on $M$ is Cohen-Macaulay.
\end{thm}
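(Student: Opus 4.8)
The plan is to reduce the Cohen-Macaulay property to a statement about the basic (transverse) action and then feed Proposition~\ref{prop:momentmorsebott} into the Morse-Bott machinery of~\cite{GT2009} and~\cite{GT2010}. First I would choose a subtorus $T'\subset T$ maximal among subtori acting on $M$ locally freely; by Proposition~\ref{prop:commutingaction} it suffices to show that the residual $T/T'$-action on $M/T'$ is equivariantly formal. Since the Reeb flow has closed orbits (Corollary~\ref{cor:C}), $T'$ is a proper subtorus, and the one-dimensional orbits $C$ are precisely the fixed point set of the $T/T'$-action on $M/T'$, so this action does have fixed points; by Proposition~\ref{prop:formaleqandCM} equivariant formality is then equivalent to the Cohen-Macaulay property, which is what we want. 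The cleaner route, and the one I would actually write down, is to phrase everything in terms of the transverse $\mfa$-action on $(M,\F)$ of Section~\ref{sec:transverseactions}: by the discussion there the Cohen-Macaulay property of the $T$-action is equivalent to equivariant formality of the $\mfa$-action on $(M,\F)$, i.e.\ to $H^\bullet_\mfa(M,\F)\cong S(\mfa^*)\otimes H^\bullet(M,\F)$ as graded $S(\mfa^*)$-modules.

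Next I would bring in the Morse-Bott theory for basic cohomology. Fix a generic $X\in\mft$ as in Proposition~\ref{prop:momentmorsebott}, so that $\Phi^X$ is a $T$-invariant Morse-Bott function whose critical set is exactly $C$, the union of the closed Reeb orbits; being $T$-invariant, $\Phi^X$ descends to a basic function on $(M,\F)$, and its critical set $C$ is a union of closed leaves of $\F$ (the one-dimensional $T$-orbits). The key input from~\cite{GT2009},~\cite{GT2010} is that a generic component of the moment map of an equivariantly formal (equivalently Cohen-Macaulay) transverse action is a \emph{perfect} Morse-Bott function for basic cohomology, and conversely — more usefully here — the existence of such a function controls the module structure of equivariant basic cohomology. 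Concretely, I would invoke the result that if $\Phi^X$ is a Morse-Bott function on $(M,\F)$ whose critical set coincides with the fixed point set of the transverse action (here: the set of leaves on which $\mfa$ acts trivially, which is $C$), then one can run the Morse-Bott stratification argument of~\cite{GT2009} to show that $H^\bullet_\mfa(M,\F)$ is a free $S(\mfa^*)$-module, hence the $\mfa$-action is equivariantly formal. Equivalently, via Proposition~\ref{prop:commutingaction}, the downstairs function $\bar\Phi^X$ on $M/T'$ is a $T/T'$-invariant Morse-Bott function whose critical set is the $T/T'$-fixed point set, and the classical fact that such a function forces equivariant formality of the $T/T'$-action on the orbifold $M/T'$ finishes the argument.

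The technical heart is the verification that one really is in the situation covered by the abstract theorems of~\cite{GT2009} and~\cite{GT2010}: one must check that $\Crit(\Phi^X)=C$ is not merely the critical set but agrees with the fixed locus of the transverse action (so that the ``localization at the fixed set'' in the Morse-Bott argument sees the full equivariant cohomology), that the negative normal bundles of the critical submanifolds behave well with respect to $\F$ and carry the necessary orientations, and that the spectral sequence or the long exact sequences of the Morse-Bott filtration collapse appropriately over $S(\mfa^*)$. Concretely this amounts to pointing to the precise statement in~\cite{GT2009} (the Morse-Bott-theoretic criterion for Cohen-Macaulayness, respectively equivariant formality) and observing that Proposition~\ref{prop:momentmorsebott} supplies exactly its hypotheses; the nondegeneracy of the Hessian in normal directions, which is the one analytic point, is quoted from Lemma~1.(2) of~\cite{Rukimbira2}. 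I expect the main obstacle to be purely expository: threading the equivalences ``$T$-action Cohen-Macaulay $\Leftrightarrow$ $T/T'$-action on $M/T'$ equivariantly formal $\Leftrightarrow$ $\mfa$-action on $(M,\F)$ equivariantly formal'' so that the cited Morse-Bott criterion applies on the nose, rather than any genuinely new computation.
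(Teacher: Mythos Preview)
Your proposal is correct in its essential content: the key inputs are Proposition~\ref{prop:momentmorsebott} (existence of a $T$-invariant Morse--Bott function with critical set $C$) and the Morse--Bott criterion for Cohen--Macaulayness from~\cite{GT2009}. The paper's own proof, however, is far more direct than yours: it simply applies Theorem~7.1 of~\cite{GT2009} to the $T$-action on $M$ itself, with Proposition~\ref{prop:momentmorsebott} supplying the hypothesis, and is done in one line. There is no need to first pass through the equivalences with the $T/T'$-action on $M/T'$ or the transverse $\mfa$-action; the paper establishes the $\mfa$-version (Theorem~\ref{thm:foliationeqformal}) separately by the same mechanism (Theorem~6.3 of~\cite{GT2010}) and only afterwards remarks that the two statements are equivalent.

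One genuine caution: your alternative route via the orbifold $M/T'$ --- ``the downstairs function $\bar\Phi^X$ on $M/T'$ is a $T/T'$-invariant Morse--Bott function whose critical set is the $T/T'$-fixed point set, and the classical fact that such a function forces equivariant formality'' --- is not available off the shelf. The paper explicitly flags this in Remark~\ref{rem:CM}: an orbifold version of Kirwan's equivariant formality result would give another proof, but the paper does not claim one exists in the literature. So if you want to write the argument via the quotient, you would have to either prove an orbifold statement or, cleaner, stay on $M$ (respectively on $(M,\F)$) and cite Theorem~7.1 of~\cite{GT2009} (respectively Theorem~6.3 of~\cite{GT2010}) directly, which is what the paper does.
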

\begin{proof} If the rank $(M,\alpha)$ is one, then $T$ is a circle acting locally freely, hence the action is Cohen-Macaulay by Example~\ref{ex:locallyfree}.
The general case follows directly from Theorem~7.1 of~\cite{GT2009} and Proposition~\ref{prop:momentmorsebott}.
 \end{proof}
\begin{rem}\label{rem:CM}
Let $S^1\subset T$ be a circle whose orbits are transverse to $\ker \alpha$. By Lemma~\ref{lem:modification} (1), we have a $K$-contact structure $(\alpha',g')$ whose Reeb flow is given by $S^1$. The two-form $d\alpha'$ is $S^1$-basic and thus descends to the orbifold $M/S^1$ thereby turning it into a symplectic orbifold. The contact moment map $\Phi:M\to \mft^*;\, p\mapsto \alpha'_p$ descends to a moment map for the $T/S^1$-action on $M/S^1$. Then, by Proposition~\ref{prop:commutingaction} the $T$-action on $M$ is Cohen-Macaulay if and only if the $T/S^1$-action on the orbifold $M/S^1$ is equivariantly formal. Therefore, if there existed an orbifold version of the classical result of Kirwan (Proposition~5.8 of~\cite{Kirwan}) that Hamiltonian actions on compact symplectic manifolds are equivariantly formal, it would also give a proof of Theorem~\ref{thm:actionisCM}.
\end{rem}
Lemma~\ref{lem:modification} (2) and Theorem~\ref{thm:actionisCM} imply:
\begin{cor} Any $\alpha$-preserving effective torus action on $M$ that has $R$ as a fundamental vector field is Cohen-Macaulay.
\end{cor}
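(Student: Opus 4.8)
The plan is to reduce the statement to Theorem~\ref{thm:actionisCM} by exhibiting, for any such torus action, a $K$-contact structure on $M$ whose Reeb flow has precisely that torus as its closure in the relevant isometry group. So let $T'$ be an effective torus action on $M$ preserving $\alpha$ and having $R$ as a fundamental vector field. In particular $R \in \mft' = \Lie(T')$, and since the original Reeb flow is dense in $T$ (as $T$ is the closure of that flow), the torus $T'$ contains $T$; moreover the $T'$-action preserves $\alpha$ by hypothesis. These are exactly the conditions of Lemma~\ref{lem:modification}~(2).

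Applying Lemma~\ref{lem:modification}~(2), I obtain a $K$-contact structure $(\alpha', g')$ on $M$ such that the closure of the Reeb flow of $\alpha'$ in $\Isom(M, g')$ equals $T'$. Now Theorem~\ref{thm:actionisCM}, applied to the $K$-contact manifold $(M, \alpha', g')$, says precisely that the $T'$-action on $M$ is Cohen-Macaulay. Since the Cohen-Macaulay property of a torus action is a property of the action on $M$ (it is defined purely in terms of the $S(\mft'^*)$-module $H^\bullet_{T'}(M)$, which does not reference $\alpha'$ or $g'$), this is exactly the conclusion we want.

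The one point that deserves a word of care, and which I expect to be the only real obstacle, is checking that the hypotheses of Lemma~\ref{lem:modification}~(2) are genuinely met — specifically that $T \subseteq T'$. This follows because $R$ is a fundamental vector field of the $T'$-action, so the Reeb flow $\{\exp(tR)\}$ lies in $T'$; since $T'$ is closed in $\Diff(M)$ (being a compact torus), it contains the closure of this flow, which is $T$. The effectiveness hypothesis on the $T'$-action is not strictly needed for the argument but guarantees that $T'$ is a well-defined torus in $\Diff(M)$ rather than a quotient; in any case one may replace $T'$ by its image in $\Diff(M)$ without changing $H^\bullet_{T'}(M)$ up to the relevant isomorphism. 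With these identifications in place the corollary is immediate from Lemma~\ref{lem:modification}~(2) and Theorem~\ref{thm:actionisCM}, as the text already asserts.
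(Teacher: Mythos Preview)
Your argument is correct and follows exactly the approach indicated in the paper, which simply cites Lemma~\ref{lem:modification}~(2) and Theorem~\ref{thm:actionisCM}. Your verification that $T\subseteq T'$ (via compactness of $T'$ in $\Diff(M)$ and the fact that $R$ generates a one-parameter subgroup of $T'$) is the only nontrivial point to check, and you handle it correctly.
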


Let $\F$ be the orbit foliation of the Reeb flow, which is Riemannian. We consider the transverse action of $\mfa=\mft/\RR R$ on the foliated manifold $(M,\F)$ (see Definition~\ref{def:transverseaction} and Equation \eqref{eq:traction}).

\begin{thm} \label{thm:foliationeqformal} The $\mfa$-action on $(M,\F)$ is equivariantly formal, i.e., we have $H^{\bullet}_\mfa(M,\F)\cong S(\mfa^*)\otimes H^{\bullet}(M,\F)$ as graded $S(\mfa^{*})$-modules.
\end{thm}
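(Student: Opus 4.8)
The plan is to deduce Theorem~\ref{thm:foliationeqformal} from Theorem~\ref{thm:actionisCM} by translating between the equivariant cohomology of the $T$-action on $M$ and the equivariant basic cohomology of the transverse $\mfa$-action on $(M,\F)$. Recall that $\mfa = \mft/\RR R$ and that $\F$ is the orbit foliation of the Reeb flow. First I would handle the trivial case where the rank of $(M,\alpha)$ is one: then $\mfa = 0$, the foliation $\F$ has all orbits closed of dimension one, and the statement reduces to $H^\bullet(M,\F)\cong H^\bullet(M,\F)$, which is vacuous.

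For the general case, the key is a comparison result between $H^\bullet_T(M)$ and $H^\bullet_\mfa(M,\F)$. By Proposition~\ref{prop:commutingaction}, choosing a maximal subtorus $T'\subset T$ acting locally freely, we have $H^\bullet_T(M)\cong H^\bullet_{T/T'}(M/T')$, and the $T$-action is Cohen-Macaulay if and only if the $T/T'$-action on the (possibly orbifold) quotient $M/T'$ is equivariantly formal; by Theorem~\ref{thm:actionisCM} it is. The plan is then to identify $H^\bullet_\mfa(M,\F)$ with $H^\bullet_{T/T'}(M/T')$ and $H^\bullet(M,\F)$ with $H^\bullet(M/T')$, under which identifications $S(\mfa^*) = S((\mft/\RR R)^*)$ corresponds to $S((\mathfrak t/\mathfrak t')^*)$. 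Here one uses that $\RR R$ is a dense subalgebra realizing (infinitesimally) a subtorus of $T$ whose orbits are the leaves of $\F$, so the basic cohomology $H^\bullet(M,\F)$ of the Reeb foliation agrees with the ordinary cohomology of the leaf space, which is $H^\bullet(M/T')$ once one has quotiented by the locally free complement; more precisely, since $\F$ is a Killing foliation with structural Killing algebra $\mfa$ (as noted after \eqref{eq:traction}), the equivariant basic cohomology theory of \cite{GT2010} is built precisely so that $H^\bullet_\mfa(M,\F)$ computes the equivariant cohomology of the transverse action, and the dictionary of \cite{GT2009,GT2010} between transverse actions of Killing foliations and torus actions on quotients makes these identifications functorial and compatible with the $S(\mfa^*)$-module structures.

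Granting this dictionary, equivariant formality of the $T/T'$-action on $M/T'$, i.e.\ $H^\bullet_{T/T'}(M/T')\cong S((\mathfrak t/\mathfrak t')^*)\otimes H^\bullet(M/T')$ as graded $S((\mathfrak t/\mathfrak t')^*)$-modules, transports directly to $H^\bullet_\mfa(M,\F)\cong S(\mfa^*)\otimes H^\bullet(M,\F)$ as graded $S(\mfa^*)$-modules, which is exactly the assertion. Alternatively, and perhaps more cleanly, I would appeal directly to the general principle proven in \cite{GT2009,GT2010} that a transverse action of an abelian Lie algebra $\mfa$ on a Killing foliation with structural Killing algebra $\mfa$ is equivariantly formal if and only if the associated torus action (on $M$, or on the locally free quotient) is Cohen-Macaulay — this is the transverse analogue of Proposition~\ref{prop:formaleqandCM}, and combined with Theorem~\ref{thm:actionisCM} it gives the result immediately.

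The main obstacle is making the comparison between the honest torus-equivariant cohomology $H^\bullet_T(M)$ and the equivariant basic cohomology $H^\bullet_\mfa(M,\F)$ fully rigorous, including tracking the grading and the $S(\mfa^*)$-module structure through the quotient by $T'$ and through the passage to the leaf space of $\F$; this requires carefully invoking the Cartan-model computations and the Killing-foliation formalism of \cite{GT2009} and \cite{GT2010} rather than reproving them. Once the correct statement from those papers is quoted — that Cohen-Macaulayness of the $T$-action is equivalent to equivariant formality of the transverse $\mfa$-action on $(M,\F)$ — the proof is a one-line consequence of Theorem~\ref{thm:actionisCM}.
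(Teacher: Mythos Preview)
Your overall strategy is sound, and the paper itself confirms (in the remark immediately following the theorem) that Theorems~\ref{thm:actionisCM} and~\ref{thm:foliationeqformal} are equivalent via the identification $H^\bullet_T(M)\cong H^\bullet_\mfa(M,\F)$ of Example~4.3 of~\cite{GT2010} together with Example~7.5 of~\cite{GT2010}, which says that for a Killing foliation with a closed leaf the Cohen--Macaulay property of $H^\bullet_T(M)$ is equivalent to equivariant formality of the $\mfa$-action. So your ``alternative, cleaner'' route---quote this equivalence and then invoke Theorem~\ref{thm:actionisCM}---is a correct one-line proof.

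The paper, however, does not argue this way. Its proof is direct and does not pass through Theorem~\ref{thm:actionisCM} at all: it applies Theorem~6.3 of~\cite{GT2010} (a basic-cohomology analogue of Kirwan's result that a Morse--Bott function whose critical set equals the fixed locus forces equivariant formality) straight to Proposition~\ref{prop:momentmorsebott}. Thus the two theorems are proved in parallel from the same Morse--Bott input, rather than one being deduced from the other. Your approach has the virtue of making the equivalence of the two formulations explicit; the paper's approach avoids loading the dictionary between $T$-equivariant and $\mfa$-equivariant basic cohomology into the proof itself.

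One caution about your first, more detailed route through $M/T'$: the identification $H^\bullet(M,\F)\cong H^\bullet(M/T')$ you want to use is not a priori available---in the paper this isomorphism (Proposition~\ref{cor:circleaction}) is a \emph{consequence} of Theorem~\ref{thm:foliationeqformal}, not an input to it. If you pursue that line you must instead invoke the ring isomorphism $H^\bullet_\mfa(M,\F)\cong H^\bullet_T(M)$ directly (Example~4.3 of~\cite{GT2010}) and the characterization of Cohen--Macaulayness as freeness over a suitable polynomial subring, rather than trying to match the nonequivariant cohomologies first.
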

\begin{proof} This follows from Theorem~6.3 of~\cite{GT2010} and Proposition~\ref{prop:momentmorsebott}.
\end{proof}

\begin{rem}
In view of the equality
\begin{equation}\label{eq:gradedrings}
H^{\bullet}_T(M)\cong H^{\bullet}_\mfa(M,\F)
\end{equation}
\noindent as graded rings shown in Example 4.3 of~\cite{GT2010}, the statements of Theorems~\ref{thm:actionisCM} and~\ref{thm:foliationeqformal} are equivalent. In fact, because the Riemannian foliation $\F$ has closed leaves by Corollary~\ref{cor:C}, the Cohen-Macaulay property of $H^{\bullet}_T(M)$ is equivalent to equivariant formality of the $\mfa$-action, see Example 7.5 of~\cite{GT2010}.
\end{rem}

\begin{prop}[{\cite[Theorem~6.4]{GT2010}}] \label{prop:perfectbasic}
For generic $X$ in $\mft$, the function $\Phi^X$ is a perfect basic Morse-Bott function (with critical set $C$). More precisely: The basic Poincar\'e series $P_t(M,\F)$ given by $P_t(M,\F)=\sum_j t^j \dim H^j(M,\F)$ can be calculated as
\[
P_t(M,\F)=\sum_{B} t^{\lambda_{B}} P_t(B/\F),
\]
\noindent where $B$ runs over the connected components of $C$, the leaf space of $(B,\F|_{B})$ is denoted by $B/\F$ and $\lambda_{B}$ is the index of $\Phi^X$ at $B$.
\end{prop}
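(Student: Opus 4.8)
The plan is to deduce Proposition~\ref{prop:perfectbasic} from the general Morse-Bott theory for basic cohomology developed in~\cite{GT2010}, applied to the function $\Phi^X$ which we already know (Proposition~\ref{prop:momentmorsebott}) to be a $T$-invariant, hence $\F$-basic, Morse-Bott function with critical set $C$. The statement to be invoked is Theorem~6.4 of~\cite{GT2010}, which asserts that a basic Morse-Bott function on a Riemannian foliation whose $\mfa$-action is equivariantly formal is automatically \emph{perfect} for basic cohomology. Thus the structure of the argument is: (i) verify the hypotheses of that theorem in the $K$-contact setting, namely that $(M,\F)$ is a Riemannian foliation (established in Section~3, since $\F$ is the orbit foliation of an isometric flow), that $\Phi^X$ is basic Morse-Bott with critical set $C$ (Proposition~\ref{prop:momentmorsebott}), and that the transverse $\mfa$-action is equivariantly formal (Theorem~\ref{thm:foliationeqformal}); (ii) read off the resulting Morse inequalities and observe that equivariant formality forces them to be equalities, yielding the stated formula for the basic Poincar\'e series.

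First I would recall the shape of basic Morse-Bott theory: for a basic Morse-Bott function $f$ on $(M,\F)$, each critical component $B$ is $\F$-saturated, carries an index $\lambda_B$ coming from the negative normal bundle (which, being a basic sub-bundle of the normal bundle to $\F$, descends to the leaf space), and one obtains a spectral-sequence or handle-decomposition argument entirely within the category of basic forms. This produces the basic Morse inequalities $P_t(M,\F) \preceq \sum_B t^{\lambda_B} P_t(B/\F)$ in the usual sense of coefficientwise domination. The point of~\cite{GT2010}, Theorem~6.4, is that when the $\mfa$-action on $(M,\F)$ is equivariantly formal — equivalently, when $\dim H^\bullet_\mfa(M,\F) = \dim S(\mfa^*)\otimes H^\bullet(M,\F)$ — a localization/dimension count over $S(\mfa^*)$ forces these inequalities to be equalities, exactly as in the classical Kirwan argument for equivariantly formal Hamiltonian actions. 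So the second step is simply to quote this and substitute in the identification of the critical set with $C$.

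Concretely I would phrase the proof as: by Proposition~\ref{prop:momentmorsebott} the function $\Phi^X$ is, for generic $X$, a $T$-invariant Morse-Bott function with critical set $C$; since it is $T$-invariant and $\F$ is the orbit foliation of the (dense subgroup of the) $T$-action, $\Phi^X$ is $\F$-basic, so it is a basic Morse-Bott function. Each component $B$ of $C$ is a union of closed Reeb orbits, hence $\F$-saturated, and the negative normal bundle is basic; its rank $\lambda_B$ is the index. By Theorem~\ref{thm:foliationeqformal} the $\mfa$-action on $(M,\F)$ is equivariantly formal, so Theorem~6.4 of~\cite{GT2010} applies verbatim and yields both the perfectness of $\Phi^X$ and the formula
\[
P_t(M,\F) = \sum_B t^{\lambda_B} P_t(B/\F).
\]

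I do not expect a genuine obstacle here, since all the real content has been offloaded to the cited results; the only points requiring care are bookkeeping ones. The main thing to get right is the verification that $\Phi^X$ genuinely satisfies the \emph{basic} Morse-Bott condition used in~\cite{GT2010} — in particular that nondegeneracy of the transverse Hessian along the normal bundle to $\F$ (not merely to the leaves of the critical foliation) holds, which follows from Lemma~1.(2) of~\cite{Rukimbira2} as already noted after Proposition~\ref{prop:momentmorsebott} — and that the index $\lambda_B$ appearing in the foliated theory is the same one computed from the ordinary Hessian of $\Phi^X$ transverse to $B$, which it is because the negative normal directions are transverse to $\F$ and so project isomorphically to the leaf space. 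With these identifications in place the statement is an immediate corollary of Theorem~\ref{thm:foliationeqformal} and Theorem~6.4 of~\cite{GT2010}.
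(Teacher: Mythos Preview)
Your proposal is correct and matches the paper's approach: the paper does not give a separate proof of this proposition but simply records it as a direct citation of Theorem~6.4 of~\cite{GT2010}, the hypotheses of which are supplied by Proposition~\ref{prop:momentmorsebott} (and, as you note, the equivariant formality established in Theorem~\ref{thm:foliationeqformal}). Your write-up is just a more explicit unpacking of why that citation applies.
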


\begin{rem}
Note that $\Phi^X$ is also $T$-equivariantly perfect and $\mfa$-equivari\-antly basic perfect, but not necessarily perfect in the ordinary sense. For example, take the $K$-contact structure of rank $2$ on $S^3$ considered in Example~4.4 of~\cite{GT2010}, which is obtained by deforming the standard $K$-contact structure on $S^{3}$ using Lemma \ref{lem:modification}. The two critical manifolds are circles, so $\Phi^X$ cannot be perfect.
\end{rem}
\begin{rem}
In view of Proposition~\ref{cor:circleaction} below, Proposition~\ref{prop:perfectbasic} can be proven for isolated closed Reeb orbits using Lerman-Tolman \cite{LerTol}, Re\-mark~5.4.
\end{rem}

\section{Implications for the topology of the Reeb flow}\label{sec:cons}

We will apply Theorems~\ref{thm:actionisCM},~\ref{thm:foliationeqformal} and Proposition~\ref{prop:perfectbasic} to obtain various consequences on the topology of $K$-contact manifolds, in particular in the case where the closed Reeb orbits are isolated. First, we present well-known examples of $K$-contact manifolds whose closed Reeb orbits are isolated. For a representation $V$ of a $k$-dimensional torus $T^{k}$, let $V_{\mu}$ be the weight space of $V$ corresponding to a weight $\mu$. If $\mu_i$ are the weights of a $T$-representation $V$, then $V \cong \bigoplus_{i} V_{\mu_{i}}$. The following is well known:

\begin{lemma}\label{lem:weight}
Any faithful $T^{k}$-representation $V$ of dimension $2k$ has exactly $k$  weights $\mu_1$, $\ldots$, $\mu_k$. They are linearly independent and their weight spaces are two-dimensional.
\end{lemma}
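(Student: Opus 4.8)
The plan is to exploit the fact that the representation $V$ is faithful and $2k$-dimensional to pin down the structure of its weight decomposition. First I would observe that, since $T^k$ is a compact abelian group, $V$ decomposes as a direct sum of weight spaces $V \cong \bigoplus_i V_{\mu_i}$, where each real weight space $V_{\mu_i}$ carries a complex structure and is two-(real-)dimensional for each nonzero weight $\mu_i$ (a weight $\mu_i = 0$ would contribute a trivial summand). Faithfulness immediately rules out the zero weight appearing (a trivial summand could be dropped without affecting the kernel, but more importantly faithfulness forces the weights to span $\mft^*$). So $V$ is a sum of $m$ two-dimensional weight spaces for some $m$, and comparing dimensions gives $2m = 2k$, i.e.\ $m = k$. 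Thus there are exactly $k$ weights $\mu_1, \ldots, \mu_k$, each with two-dimensional weight space; this already proves two of the three assertions.

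The remaining point is linear independence of $\mu_1, \ldots, \mu_k$ in $\mft^*$. The key step here is that faithfulness of the $T^k$-action means the weights span $\mft^*$: indeed, an element $X \in \mft$ lies in the kernel of the representation (at the Lie algebra level, hence of the torus since it is connected) precisely when $\mu_i(X) = 0$ for all $i$, so the common kernel of the $\mu_i$ is trivial, which is the same as saying $\{\mu_1, \ldots, \mu_k\}$ spans the $k$-dimensional space $\mft^*$. A spanning set of size $k$ in a $k$-dimensional vector space is automatically a basis, hence linearly independent. This completes the proof.

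I do not anticipate a serious obstacle: the argument is a dimension count plus the standard translation of faithfulness into a spanning condition on weights. The only mild subtlety worth stating carefully is the passage from "the representation has trivial kernel as a map $T^k \to \mathrm{GL}(V)$" to "the weights span $\mft^*$" — this uses that $T^k$ is connected, so that the kernel of the torus homomorphism is determined by its Lie algebra, which in turn is exactly $\bigcap_i \ker \mu_i$. Everything else (the complex structure on nonzero weight spaces, the weight-space decomposition for a torus representation) is standard representation theory of compact abelian groups and can be invoked without further comment.
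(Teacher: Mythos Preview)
Your proof is correct and is the standard argument; the paper itself does not supply a proof, simply labeling the lemma as well known. One minor point of presentation: faithfulness alone does not exclude a trivial summand (e.g.\ $T^1$ rotating a plane inside $\RR^3$ is faithful but has a one-dimensional zero-weight space) --- it is faithfulness \emph{together} with the dimension bound $\dim V = 2k$ that forces the zero weight out, via exactly the spanning argument you give at the end (the nonzero weights must span the $k$-dimensional space $\mft^*$, so there are at least $k$ of them, and then $2k \geq 2k + \dim V_0$ forces $V_0 = 0$). All the ingredients are present in your write-up; you might just reorder so that the exclusion of the trivial summand is a consequence of, rather than a prelude to, the spanning-plus-counting step.
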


\begin{prop}\label{prop:toricrem}
Every closed Reeb orbit of a compact $K$-contact $(2n+1)$-manifold $(M,\alpha,g)$ of rank $n+1$ is isolated.
\end{prop}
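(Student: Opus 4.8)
The plan is to show that near any closed Reeb orbit, the isotropy algebra forces the orbit to be isolated, by a dimension count of the type encapsulated in Lemma~\ref{lem:weight}. Let $B$ be a connected component of $C$, i.e.\ a closed Reeb orbit; it is a one-dimensional $T$-orbit, so the isotropy algebra $\mft_p$ at $p\in B$ has dimension $n$ (since $\dim T=n+1$). The key point is to analyze the isotropy representation of the isotropy group $T_p$ (or its identity component, a torus of dimension $n$) on the normal space to the orbit $B$ at $p$. Because $B$ is a closed Reeb orbit, the Reeb field $R$ is tangent to $B$, hence $T_pB = \RR R_p$ and the normal space $N_p := T_pM/T_pB$ is $2n$-dimensional; moreover $\ker\alpha_p$ gives a $T_p$-invariant complement to $\RR R_p$, so $N_p \cong \ker\alpha_p$ as a $T_p$-representation, with the $T_p$-invariant complex structure $J$ and symplectic form $d\alpha_p$ making it a $2n$-dimensional complex/symplectic representation of the $n$-torus $T_p^0$.

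First I would argue that this $T_p^0$-representation on $\ker\alpha_p$ is \emph{faithful}. Indeed, if some one-parameter subgroup of $T_p^0$ acted trivially on $\ker\alpha_p$, then (being already trivial on $\RR R_p$, as $T$ preserves $\alpha$ and fixes $R$ up to the Reeb direction) it would act trivially on all of $T_pM$; since $T$ acts effectively and the isotropy representation of a compact group action at a point determines the action near the orbit — more precisely, a torus element acting trivially on $T_pM$ lies in the kernel of the slice representation and, by effectiveness together with connectedness of $M$, must be trivial — we get a contradiction. Hence $\ker\alpha_p$ is a faithful $2n$-dimensional representation of the $n$-torus $T_p^0$. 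Now Lemma~\ref{lem:weight} applies: it has exactly $n$ weights $\mu_1,\dots,\mu_n$, they are linearly independent, and each weight space is two-dimensional.

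Next I would extract isolatedness from the weight decomposition. The fixed-point set of $T_p^0$ acting on $N_p \cong \ker\alpha_p$ is $\bigoplus_{\mu_i = 0}(N_p)_{\mu_i}$; since all $n$ weights $\mu_i$ are nonzero (they are linearly independent in an $n$-dimensional dual space, so none can vanish), this fixed-point set is $\{0\}$. By the slice theorem, a neighborhood of the orbit $B$ in $M$ is equivariantly diffeomorphic to $T \times_{T_p} N_p'$ for a suitable slice $N_p'$ whose tangent at $p$ is $N_p$; the closed Reeb orbits near $B$ correspond to one-dimensional $T$-orbits, equivalently to points of $N_p'$ fixed by $T_p^0$. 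Since $0$ is an isolated $T_p^0$-fixed point of $N_p$ (the fixed set being just the origin), $B$ itself is isolated among one-dimensional $T$-orbits, i.e.\ isolated in $C$. As $C$ is closed and its components are the closed Reeb orbits, this shows every closed Reeb orbit of $M$ is isolated.

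The main obstacle is the faithfulness step: one must carefully justify that an element of the isotropy torus acting trivially on the normal space $\ker\alpha_p$ must act trivially on a whole neighborhood, hence on $M$ (using connectedness and effectiveness of the $T$-action, together with the fact that the Reeb direction is fixed because $T$ preserves $\alpha$). Everything else — the identification $N_p\cong\ker\alpha_p$ via the contact structure, the application of Lemma~\ref{lem:weight}, and the slice-theorem conclusion — is routine once faithfulness is in hand.
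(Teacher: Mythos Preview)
Your proposal is correct and follows essentially the same approach as the paper: identify the normal space to a closed Reeb orbit with $\ker\alpha_p$, note that the identity component of the isotropy $T_p$ acts faithfully on it, apply Lemma~\ref{lem:weight} to see that the only fixed point is $0$, and conclude isolatedness via the slice theorem. The paper's proof is terser (it simply asserts that $T_x^0$ acts effectively on $(\ker\alpha)_x$ and that the orbit is therefore isolated), whereas you spell out both the faithfulness argument and the slice-theorem step, but the logical structure is identical.
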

\begin{proof}
Let $x$ be a point in a closed Reeb orbit of $\alpha$. The identity component of the isotropy group $T_{x}$ at $x$ effectively acts on $(\ker \alpha)_{x}$. By Lemma~\ref{lem:weight}, the fixed point of the $T_{x}$-action on $(\ker \alpha)_{x}$ is only $0$. This implies that the closed Reeb orbit of $x$ is isolated.
 \end{proof}

\noindent Lemma~\ref{lem:modification}~(2) and Proposition~\ref{prop:toricrem} imply the following well-known result:
\begin{prop}\label{prop:toricisolatedorbits}
Any compact toric contact $(2n+1)$-manifold of Reeb type has a $K$-contact structure of rank $n+1$ such that every closed Reeb orbit is isolated.
\end{prop}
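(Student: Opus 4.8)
The plan is to combine Lemma~\ref{lem:modification}~(2) with Proposition~\ref{prop:toricrem}, exactly as the surrounding text suggests. Let $(M,\alpha)$ be a compact toric contact $(2n+1)$-manifold of Reeb type, with $T^{n+1}$ acting effectively and preserving the contact structure, and with the Reeb vector field $R$ of $\alpha$ generating an $\RR$-subaction of the $T^{n+1}$-action. The first step is to arrange that $T^{n+1}$ actually preserves a chosen contact \emph{form} (not just the contact structure): since $R\in\Lie(T^{n+1})$, averaging $\alpha$ over $T^{n+1}$ produces a $T^{n+1}$-invariant $1$-form $\alpha'$ whose Reeb vector field is still $R$ (the averaging does not change $\iota_R$ or $L_R$ of $\alpha$, so $\alpha'(R)=1$ and $\iota_R d\alpha'=0$), and $\alpha'$ is again a contact form defining the same contact structure. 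Thus we may assume from the outset that $\alpha$ is $T^{n+1}$-invariant.

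Next I would invoke Lemma~\ref{lem:modification}~(2) with $T' = T^{n+1}$. For this I first need a compatible $K$-contact structure on $(M,\alpha)$ to begin with; this is supplied by Yamazaki's Proposition~\ref{prop:Yamazaki}, since the Reeb flow of $\alpha$ is a subflow of the torus action $T^{n+1}$ and hence the Reeb flow preserves, say, a $T^{n+1}$-averaged Riemannian metric. So $(M,\alpha,g)$ is $K$-contact for some $g$; let $T$ be the closure of its Reeb flow in $\Isom(M,g)$, a subtorus of $T^{n+1}$. Lemma~\ref{lem:modification}~(2) applied to $T\subset T'=T^{n+1}$ then yields a $K$-contact structure $(\alpha',g')$ on $M$ whose Reeb flow has closure exactly $T^{n+1}$. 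By definition, the rank of $(M,\alpha',g')$ is $\dim T^{n+1} = n+1$.

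Finally, Proposition~\ref{prop:toricrem} applies verbatim to $(M,\alpha',g')$: it is a compact $K$-contact $(2n+1)$-manifold of rank $n+1$, so every closed Reeb orbit of $\alpha'$ is isolated. This completes the argument.

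I do not expect any serious obstacle here — the proof is essentially a two-line deduction from results already in hand, and the only point requiring a moment's care is the reduction to the case where the ambient torus preserves a genuine contact \emph{form}, which is handled by the averaging remark above (and is implicitly needed to be able to feed $(M,\alpha)$ into Yamazaki's proposition and into Lemma~\ref{lem:modification}~(2), both of which are phrased for a torus preserving $\alpha$ itself). One should also double-check that the definition of ``contact toric manifold of Reeb type'' in the paper already presupposes the $T^{n+1}$-action preserves the contact \emph{structure} and that $R$ lies in its Lie algebra, which it does; so the averaging step is the only genuinely new observation, and it is routine.
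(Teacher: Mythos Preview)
Your argument is correct and follows the same route the paper takes: the paper simply states that Lemma~\ref{lem:modification}~(2) and Proposition~\ref{prop:toricrem} together yield the result. Your averaging step, ensuring that $T^{n+1}$ preserves an actual contact \emph{form} (not just the contact structure) so that the hypotheses of Lemma~\ref{lem:modification}~(2) are met, is a useful clarification that the paper leaves implicit.
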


Below we will use the notation from Section~\ref{sec:main}. The following is one of the main results in this paper:
\begin{thm}\label{cor:odd} Assume that the closed Reeb orbits are isolated. Then we have $H^{\odd}(M,\F)=0$ and consequently, $H^1(M)=0$. Also, $H^{\odd}_{\mfa}(M,\F)=0$.
\end{thm}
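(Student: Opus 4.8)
The plan is to read off $H^{\odd}(M,\F)=0$ directly from Proposition~\ref{prop:perfectbasic}, and then to deduce $H^1(M)=0$ and $H^{\odd}_\mfa(M,\F)=0$ by formal arguments. Fix a generic $X\in\mft$. Since the closed Reeb orbits are isolated, each connected component $B$ of $C$ is a single closed Reeb orbit, hence a single leaf of $\F$; thus $B/\F$ is a point and $P_t(B/\F)=1$, so Proposition~\ref{prop:perfectbasic} gives $P_t(M,\F)=\sum_B t^{\lambda_B}$. It therefore suffices to show that every Morse--Bott index $\lambda_B$ is even.

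To this end, fix $x\in B$. Because $X^{\#}$ and $R$ are $T$-invariant and $B$ is a $T$-orbit, $X^{\#}$ equals $cR$ on $B$ for some constant $c$; then $Y:=X-cR\in\mft$ satisfies $Y^{\#}|_B=0$, and $\Phi^Y=\Phi^X-c\,\Phi^R=\Phi^X-c$, so $\Phi^X$ and $\Phi^Y$ have the same Hessian along $B$. The normal space to $\F$ at $x$ is $(\ker\alpha)_x$, and from $(d\Phi^Y)_p(v)=-(d\alpha)_p(Y^{\#}_p,v)$ together with $Y^{\#}_x=0$ one computes
\[
\Hess(\Phi^Y)_x(v,w)=-(d\alpha)_x(\nabla_v Y^{\#},w),\qquad v,w\in(\ker\alpha)_x .
\]
The endomorphism $L:=\nabla Y^{\#}|_x$ of $(\ker\alpha)_x$ is skew-symmetric with respect to $g_x$ (as $Y^{\#}$ is Killing) and commutes with $J_x$ (as the flow of $Y^{\#}$ fixes $x$ and preserves $\alpha$ and $g$, hence $J$). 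Writing $\Hess(\Phi^Y)_x=g_x(S\,\cdot\,,\cdot)$, a short computation gives $S=-J_xL$, so $S$ commutes with $J_x$; therefore the negative eigenspace of $S$ --- which is the negative normal space of $\Phi^X$ at $B$ --- is $J_x$-invariant, and $\lambda_B$ is even. Hence $P_t(M,\F)\in\ZZ[t^2]$, that is, $H^{\odd}(M,\F)=0$; in particular $H^1(M,\F)=0$.

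For $H^1(M)=0$ I would invoke the Gysin long exact sequence of the Reeb flow. Every $L_R$-invariant form can be written uniquely as $\eta+\alpha\wedge\xi$ with $\iota_R\eta=\iota_R\xi=0$, and then $\eta$ and $\xi$ are basic; since $d\alpha$ is basic and the cohomology of the complex of $L_R$-invariant forms equals $H^\bullet(M)$, this decomposition exhibits the invariant complex as a mapping cone and produces
\[
\cdots\to H^{k-2}(M,\F)\xrightarrow{\,\cup[d\alpha]\,}H^{k}(M,\F)\to H^{k}(M)\to H^{k-1}(M,\F)\xrightarrow{\,\cup[d\alpha]\,}H^{k+1}(M,\F)\to\cdots .
\]
At $k=1$ the first term is $H^1(M,\F)=0$, and $\cup[d\alpha]\colon H^0(M,\F)=\RR\to H^2(M,\F)$ sends $1$ to $[d\alpha]$, which is nonzero: if $[d\alpha]=0$ then $[d\alpha]^n=0$, but $(d\alpha)^n$ is not basic-exact, since otherwise $\alpha\wedge(d\alpha)^n$ would be exact on $M$ (using that basic forms vanish in degrees above $\codim \F=2n$), contradicting that it is a volume form. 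Hence $H^1(M)$ injects into $0$, so $H^1(M)=0$.

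Finally, $H^{\odd}_\mfa(M,\F)=0$ is immediate from equivariant formality (Theorem~\ref{thm:foliationeqformal}): $H^\bullet_\mfa(M,\F)\cong S(\mfa^*)\otimes H^\bullet(M,\F)$ as graded modules, and both factors are concentrated in even degrees --- $S(\mfa^*)$ because $\mfa^*$ sits in degree $2$, and $H^\bullet(M,\F)$ by the above. The main obstacle is the evenness of the indices $\lambda_B$, i.e.\ the Hessian computation and its compatibility with the transverse complex structure $J$; everything else is formal once the Gysin sequence and the nonvanishing of $[d\alpha]$ in $H^2(M,\F)$ are in place.
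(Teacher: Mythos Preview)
Your proof is correct and follows the same overall strategy as the paper: apply Proposition~\ref{prop:perfectbasic} to reduce to showing that each index $\lambda_B$ is even, then use the Gysin sequence for $H^1(M)=0$ and equivariant formality (Theorem~\ref{thm:foliationeqformal}) for $H^{\odd}_\mfa(M,\F)=0$. The one substantive difference is in the evenness of $\lambda_B$: the paper argues in a single line that the unstable normal bundle is invariant under the isotropy group $T_x$, and since the isolated-orbit hypothesis forces the normal isotropy representation to have no trivial summands, every $T_x$-invariant subspace has even rank. Your alternative --- computing the Hessian explicitly and showing it commutes with the transverse almost complex structure $J$ --- is the direct analogue of the standard symplectic argument and is more self-contained, at the cost of a short computation; the paper's isotropy argument is slicker but implicitly uses that the orbit is isolated in $C$ to rule out trivial weight spaces. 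For the Gysin step the paper simply cites the isomorphism $H^1(M,\F)\cong H^1(M)$, whereas you spell out the mapping-cone description and the nonvanishing of $[d\alpha]$; both are fine.
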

\begin{proof} For each connected component $B$ of the union $C$ of closed Reeb orbits, we have $P_t(B/\F)=1$. Moreover, the unstable normal bundles are invariant under the action of isotropy groups and hence of even rank. Then $H^{\odd}(M,\F)=0$ follows from Propositions~\ref{prop:momentmorsebott} and~\ref{prop:perfectbasic}. By the Gysin sequence for an isometric flow $\F$, see Saralegui~\cite{Saralegui} or Equation~(7.2.1) in p.~215 of \cite{Boyer Galicki}, we have $H^{1}(M,\F) \cong H^{1}(M)$, which proves the second assertion. $H^{\odd}(M,\F)=0$ and Theorem~\ref{thm:foliationeqformal} imply the last assertion.
 \end{proof}

\begin{rem}
In Theorem~2 of~\cite{Rukimbira}, Rukimbira showed that $H^{1}(M)=0$ if the closed Reeb orbits are isolated and $M$ is a Sasakian manifold. As stated in Theorem~7.4.8 of~\cite{Boyer Galicki}, his proof can be extended to show $H^{1}(M)=0$ under the assumption of Theorem~\ref{cor:odd} in the $K$-contact case.
\end{rem}

\begin{rem}
Theorem~\ref{cor:odd} follows also from Theorem~\ref{thm:foliationeqformal} and the Borel-type Localization Theorem for equivariant basic cohomology (Theorem 5.2 of~\cite{GT2010}): Because the $\mfa$-action is equivariantly formal by Theorem~\ref{thm:foliationeqformal}, $H^\bullet_\mfa(M,\F)$ is a torsion-free $S(\mfa^*)$-module, and so the natural restriction map $H^{\bullet}_\mfa(M,\F)\to H^{\bullet}_\mfa(C,\F)\cong \bigoplus_{B} S(\mfa^*)$ is injective, where $B$ runs over the connected components of $C$. The right hand side has no elements of odd degree, which implies $H^{\odd}_\mfa(M,\F)=0$ and hence by Theorem~\ref{thm:foliationeqformal} also $H^{\odd}(M,\F)=0$.
\end{rem}

\begin{cor}\label{cor:S1} If $S^1\subset T$ is any circle acting  locally freely on $M$, then the $T/S^1$-action on the orbifold $M/S^1$ is equivariantly formal.
\end{cor}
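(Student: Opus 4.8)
The plan is to reduce the corollary to the two structural facts already established in Section~\ref{sec:main}, namely that the $T$-action on $M$ is Cohen--Macaulay (Theorem~\ref{thm:actionisCM}) and that the set $C$ of closed Reeb orbits is nonempty (Corollary~\ref{cor:C}), by taking quotients in two stages. First I would enlarge $S^1$ to a subtorus $T'\subseteq T$ that contains $S^1$ and is maximal among subtori acting locally freely on $M$; such a $T'$ exists because $S^1$ itself acts locally freely, and maximality among locally free subtori containing $S^1$ forces maximality among all locally free subtori. Proposition~\ref{prop:commutingaction} then applies to this $T'$, and since the $T$-action on $M$ is Cohen--Macaulay it tells us directly that the residual $T/T'$-action on $M/T'$ is equivariantly formal.

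Next I would push this information down to the intermediate orbifold $M':=M/S^1$ carrying the residual $T/S^1$-action. The subtorus $T'/S^1\subseteq T/S^1$ acts locally freely on $M'$ and is maximal with this property --- the preimage in $T$ of any locally free subtorus of $T/S^1$ is a locally free subtorus of $T$ containing $T'$, hence equals $T'$ --- and one has $M'/(T'/S^1)=M/T'$, with induced action that of $(T/S^1)/(T'/S^1)=T/T'$. Applying the orbifold version of the last sentence of Proposition~\ref{prop:commutingaction} to the $T/S^1$-action on $M'$, and using that we have just shown the $T/T'$-action on $M/T'$ to be equivariantly formal, I conclude that the $T/S^1$-action on $M'$ is Cohen--Macaulay.

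To finish I would upgrade Cohen--Macaulay to equivariantly formal for the $T/S^1$-action on $M'$ by means of the orbifold version of Proposition~\ref{prop:formaleqandCM}, for which it suffices to produce a single $T/S^1$-fixed point. Corollary~\ref{cor:C} supplies a closed Reeb orbit $\mathcal{O}$, which is a one-dimensional $T$-orbit; for $x\in\mathcal{O}$, local freeness makes the $S^1$-orbit of $x$ one-dimensional, and being contained in the circle $\mathcal{O}$ it must equal $\mathcal{O}$, so $T\cdot x=\mathcal{O}=S^1\cdot x$ and the image of $x$ in $M'$ is fixed by $T/S^1$.

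The only point that needs care is that Propositions~\ref{prop:commutingaction} and~\ref{prop:formaleqandCM} are stated for torus actions on compact manifolds, whereas I apply them to the orbifold $M'$; this is the same orbifold extension already invoked in Remark~\ref{rem:CM}, and it is legitimate because $M'$ is the leaf space of the closed-leaved Riemannian foliation by the $S^1$-orbits, equipped with the transverse action of $\Lie(T/S^1)$, so the statements used are instances of the basic/transverse equivariant theory of~\cite{GT2009} and~\cite{GT2010}. Alternatively the whole argument can be run one level below, replacing $H^\bullet_{T/S^1}(M')$ throughout by the equivariant basic cohomology of that foliation, which avoids orbifolds entirely; either way the mathematical content is exactly the three steps above, and I do not expect any step to require a genuine computation.
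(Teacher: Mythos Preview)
Your proof is correct, but it takes a longer route than necessary. The paper's argument is a one-liner: the very fixed-point computation you perform at the end---that a closed Reeb orbit $\mathcal{O}$ satisfies $T\cdot x = S^1\cdot x$---already shows that the minimal $T$-orbit dimension on $M$ is $1$, and hence that \emph{any} circle acting locally freely is automatically maximal among locally free subtori of $T$ (a locally free subtorus $T''$ of dimension $\geq 2$ would have $\dim T''\cdot x = \dim T'' \geq 2$ at such an $x$, contradicting $T''\cdot x \subset T\cdot x$). Thus in your notation $T' = S^1$ from the outset, Proposition~\ref{prop:commutingaction} applies directly on the manifold $M$, and the two-stage quotient together with the orbifold versions of Propositions~\ref{prop:formaleqandCM} and~\ref{prop:commutingaction} are never needed. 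Your detour is harmless and the orbifold caveat you raise can be handled as you indicate (and note that Remark~\ref{rem:CM} does not actually invoke an orbifold version of Proposition~\ref{prop:commutingaction}---it applies it on $M$), but the paper sidesteps all of this by making the maximality observation first.
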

\begin{proof}
This follows directly from Theorem~\ref{thm:actionisCM} and Proposition~\ref{prop:commutingaction} because the $T/S^1$-action has fixed points.
 \end{proof}

\begin{rem}
When the orbits of the $S^{1}$-action are transverse to the contact structure $\ker \alpha$, the $S^{1}$-action is the Reeb flow of a $K$-contact structure by Lemma~\ref{lem:modification}~(1). In this case, the $T/S^{1}$-action on $M/S^{1}$ is a Hamiltonian torus action on a symplectic orbifold. Thus Corollary~\ref{cor:S1} provides examples of equivariant formal Hamiltonian torus actions on symplectic orbifolds.
\end{rem}

\begin{prop}\label{cor:circleaction} If $S^1\subset T$ is any circle acting locally freely on $M$, then $H^{\bullet}(M,\F)$ and $H^{\bullet}(M/S^1)$ are isomorphic as graded vector spaces.
\end{prop}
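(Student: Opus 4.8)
The plan is to relate both sides to the equivariant basic cohomology of the $\mfa$-action, using the equivariant formality established in Theorem~\ref{thm:foliationeqformal} and the orbifold description of $M/S^1$. First I would observe that since $S^1\subset T$ acts locally freely, the quotient $M/S^1$ is an orbifold carrying an action of the torus $T/S^1$, and by Corollary~\ref{cor:S1} this action is equivariantly formal. Writing $\mfb = \Lie(T/S^1) = \mft/\RR Z$, where $Z$ generates $S^1$, equivariant formality of the $T/S^1$-action on the orbifold gives $H^\bullet_{T/S^1}(M/S^1)\cong S(\mfb^*)\otimes H^\bullet(M/S^1)$ as graded $S(\mfb^*)$-modules; in particular the ordinary cohomology $H^\bullet(M/S^1)$ is recovered as the quotient $H^\bullet_{T/S^1}(M/S^1)/\mfb^*\cdot H^\bullet_{T/S^1}(M/S^1)$, i.e.\ by setting the degree-two generators to zero.

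Next I would identify $H^\bullet_{T/S^1}(M/S^1)$ with an equivariant basic cohomology of $(M,\F)$. The orbit foliation $\F$ of the Reeb flow has one-dimensional leaves, and the $S^1$-orbits are contained in the closures of these leaves; more precisely, the foliation $\F'$ obtained from $\F$ by collapsing along $S^1$ relates the two leaf spaces, and by the reasoning already used in Remark~\ref{rem:CM} and Proposition~\ref{prop:commutingaction}, one has $H^\bullet_T(M)\cong H^\bullet_{T/S^1}(M/S^1)$ as graded rings, which via Equation~\eqref{eq:gradedrings} equals $H^\bullet_\mfa(M,\F)$. Thus the reduction of $H^\bullet_\mfa(M,\F)$ modulo the ideal generated by $\mfa^*$ — equivalently, modulo the ideal generated by $\mfb^*$ after accounting for the extra circle — computes $H^\bullet(M/S^1)$.

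On the other side, equivariant formality of the $\mfa$-action (Theorem~\ref{thm:foliationeqformal}) gives $H^\bullet_\mfa(M,\F)\cong S(\mfa^*)\otimes H^\bullet(M,\F)$ as $S(\mfa^*)$-modules, so the quotient by the augmentation ideal $S(\mfa^*)^+\cdot H^\bullet_\mfa(M,\F)$ is exactly $H^\bullet(M,\F)$ as a graded vector space. Comparing the two computations of the quotient of $H^\bullet_\mfa(M,\F) = H^\bullet_T(M) = H^\bullet_{T/S^1}(M/S^1)$ by the respective augmentation ideals — which agree because in both cases one is killing the full set of degree-two equivariant parameters of the effective torus acting (the $S^1$ acts locally freely, so it contributes no new parameters to the module structure beyond those of $T/S^1$) — yields the desired isomorphism $H^\bullet(M,\F)\cong H^\bullet(M/S^1)$ of graded vector spaces.

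The main obstacle I anticipate is making precise the claim that the augmentation-ideal quotients on the two sides coincide: one must check that the $S(\mft^*)$-module structure on $H^\bullet_T(M)$ factors through $S(\mfa^*)$ in a way compatible with factoring through $S(\mfb^*)$, i.e.\ that the extra relations coming from the $S^1$-direction are already implicit once one passes to the locally free quotient. This is essentially the content of Proposition~\ref{prop:commutingaction} together with the fact that $S^1$ acts locally freely (so the $S^1$-equivariant cohomology of $M$ is just $H^\bullet(M/S^1)$ with no polynomial part surviving), but one should verify that the grading is preserved throughout and that "graded vector spaces" — rather than graded rings — is the correct level of generality for the final statement, since the ring structures on $H^\bullet(M,\F)$ and $H^\bullet(M/S^1)$ need not match.
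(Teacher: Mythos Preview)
Your setup is exactly the paper's: you invoke the chain of graded ring isomorphisms $H^\bullet_\mfa(M,\F)\cong H^\bullet_T(M)\cong H^\bullet_{T/S^1}(M/S^1)$ together with equivariant formality of both the $\mfa$-action (Theorem~\ref{thm:foliationeqformal}) and the $T/S^1$-action (Corollary~\ref{cor:S1}). The divergence is in the final step, and the obstacle you flag is real and not dispatched by what you cite.

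The two module structures are over $S(\mfa^*)$ and $S(\mfb^*)$ with $\mfa=\mft/\RR R$ and $\mfb=\mft/\RR Z$. Unless the Reeb vector field $R$ and the generator $Z$ of the chosen $S^1$ are proportional, these are \emph{different} hyperplanes of $\mft^*$, and their images in $H^2_T(M)$ need not generate the same ideal: under the identification $H^\bullet_T(M)\cong H^\bullet_{T/S^1}(M/S^1)$ the linear form dual to $Z$ does not map to zero but to the Euler class of the orbifold $S^1$-bundle $M\to M/S^1$ sitting inside $H^2(M/S^1)$. So ``killing the degree-two equivariant parameters'' means different things on the two sides, and Proposition~\ref{prop:commutingaction} does not repair this---it gives a ring isomorphism, not a match of the two $S(\cdot)$-module structures. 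If your argument went through as written it would in fact produce a ring isomorphism $H^\bullet(M,\F)\cong H^\bullet(M/S^1)$, which is more than the proposition claims and need not hold.

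The paper sidesteps the issue by extracting only dimensions. Equivariant formality on each side yields
\[
P_t(S(\mfa^*))\cdot P_t(M,\F)=P_t\big(H^\bullet_\mfa(M,\F)\big)=P_t\big(H^\bullet_{T/S^1}(M/S^1)\big)=P_t(S(\mfb^*))\cdot P_t(M/S^1),
\]
and since $\dim\mfa=\dim\mfb=\dim\mft-1$ one has $P_t(S(\mfa^*))=P_t(S(\mfb^*))=(1-t^2)^{-(\dim\mft-1)}$, whence $P_t(M,\F)=P_t(M/S^1)$. This Poincar\'e-series cancellation is exactly the missing step that converts your ingredients into a proof, and it also explains why the conclusion is stated only for graded vector spaces.
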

\begin{proof} We have isomorphisms
\begin{equation}\label{eq:isomgradedrings}
H^\bullet_\mfa(M,\F)\cong H^\bullet_T(M)\cong H^\bullet_{T/S^1}(M/S^1)
\end{equation}
\noindent as graded rings, see \eqref{eq:gradedrings} and Proposition~\ref{prop:commutingaction}. Furthermore, both the transverse action of $\mfa$ as well as the $T/S^1$-action on $M/S^1$ are equivariantly formal by Theorem~\ref{thm:foliationeqformal} and Corollary~\ref{cor:S1}, hence the Poincar\'{e} series of the graded vector spaces in \eqref{eq:isomgradedrings} are given by
\begin{align*}
P_t(S(\mfa^*))\cdot P_t(M,\F)&=P_t^\mfa(M,\F)\\
&=P_t^{T/S^1}(M/S^1)=P_t(S((\mft/\RR)^*))\cdot P_t(M/S^1).
\end{align*}
\noindent Because $\mfa$ and $\mft/\RR$ are vector spaces of the same dimension, this implies that the Poincar\'{e} polynomials $P_t(M,\F)$ and $P_t(M/S^1)$ coincide.
 \end{proof}

\begin{ex}
Let $(X,\omega)$ be a closed symplectic manifold such that $[\omega]\in H^{2}(X)$ is integral. By the Boothby-Wang construction~\cite{Boothby Wang}, we have a principal $S^{1}$-bundle $\pi \colon M \to X$ with a contact form $\alpha$ which satisfies $d\alpha=\pi^{*}\omega$ and whose Reeb flow is the principal $S^{1}$-action. By Proposition~\ref{prop:Yamazaki}, we have a $K$-contact structure $(\alpha,g)$ on $M$. For any $K$-contact structure $(\alpha',g')$ on $M$ obtained by applying Lemma~\ref{lem:modification} (2) to $(\alpha,g)$, the basic cohomology  of the orbit foliation of the Reeb flow of $\alpha'$ is isomorphic to $H^{\bullet}(X)$ by Proposition~\ref{cor:circleaction}. Thus Theorem~\ref{cor:odd} implies that if $H^{\odd}(X)$ is nontrivial, then we cannot deform the Reeb vector field of $\alpha$ by Lemma~\ref{lem:modification} (2) to obtain a $K$-contact structure on $M$ whose Reeb flow has only isolated closed Reeb orbits.
\end{ex}

\begin{thm}\label{cor:numC} We have $\dim H^{\bullet}(C/\F)=\dim H^{\bullet}(M,\F)$. In particular, in case the closed Reeb orbits are isolated, their number is given by $\dim H^{\bullet}(M,\F)$.
\end{thm}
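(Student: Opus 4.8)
The plan is to deduce the statement directly from the perfectness result in Proposition~\ref{prop:perfectbasic}. First I would apply Proposition~\ref{prop:perfectbasic} to a generic $X\in\mft$, which gives
\[
P_t(M,\F)=\sum_{B} t^{\lambda_B}\,P_t(B/\F),
\]
where $B$ ranges over the connected components of $C$. Next I would observe that each such $B$ is a union of closed Reeb orbits, and since $\F$ restricted to $B$ is the orbit foliation of the (locally free) Reeb flow on $B$, the leaf space $B/\F$ is a compact orbifold whose rational cohomology is finite-dimensional; in fact, as already noted in the proof of Theorem~\ref{cor:odd}, one has $P_t(B/\F)=1$ when the closed Reeb orbits are isolated, but for the general statement I would only need that $C/\F=\bigsqcup_B B/\F$ so that $\dim H^\bullet(C/\F)=\sum_B \dim H^\bullet(B/\F)=\sum_B P_1(B/\F)$. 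Setting $t=1$ in the displayed formula then yields
\[
\dim H^\bullet(M,\F)=P_1(M,\F)=\sum_B P_1(B/\F)=\dim H^\bullet(C/\F),
\]
which is the first assertion.

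For the second assertion I would specialize to the case where the closed Reeb orbits are isolated. Then each component $B$ of $C$ is a single closed Reeb orbit, i.e.\ a circle on which the Reeb flow acts transitively; hence the leaf space $B/\F$ is a single point and $\dim H^\bullet(B/\F)=1$. Therefore $\dim H^\bullet(C/\F)$ equals the number of components of $C$, which is exactly the number of closed Reeb orbits, and combining with the first part gives $\dim H^\bullet(M,\F)=\#\{\text{closed Reeb orbits}\}$.

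I expect the only delicate point to be the identification $H^\bullet(C/\F)\cong\bigoplus_B H^\bullet(B/\F)$ together with the finite-dimensionality of each $H^\bullet(B/\F)$: one must note that $C$ is a closed (hence compact) $T$-invariant submanifold of $M$, that its finitely many connected components $B$ are exactly the components appearing in Proposition~\ref{prop:perfectbasic}, and that $\F|_B$ is a Riemannian foliation with compact leaf space, so that its basic cohomology is the singular cohomology of the orbifold $B/\F$ and is finite-dimensional. Everything else is a matter of evaluating the perfect Morse--Bott identity of Proposition~\ref{prop:perfectbasic} at $t=1$.
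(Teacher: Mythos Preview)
Your argument is correct, but it takes a different route from the paper. The paper invokes the Borel-type localization theorem for equivariant basic cohomology (Theorem~5.5 of~\cite{GT2010}) together with the equivariant formality established in Theorem~\ref{thm:foliationeqformal}; alternatively, it passes to the orbifold $M/S^1$ and applies classical localization. You instead exploit Proposition~\ref{prop:perfectbasic} directly: once one knows that $\Phi^X$ is a perfect basic Morse--Bott function, the dimension identity follows by evaluating the Poincar\'e series identity at $t=1$, using only that $C$ has finitely many compact components $B$ with finite-dimensional $H^\bullet(B/\F)$.

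Both arguments ultimately rest on equivariant formality (since Proposition~\ref{prop:perfectbasic} is itself a corollary of Theorem~\ref{thm:foliationeqformal}), so neither is logically more elementary. Your approach is pleasantly concrete and makes the isolated-orbit case transparent, while the localization argument is more conceptual and would apply without any reference to the moment map or Morse theory. A minor point worth stating explicitly: the identification of $H^\bullet(B/\F)$ with the cohomology of the orbifold $B/\F$ and its finite-dimensionality hold because on $B$ every leaf of $\F$ is closed (all Reeb orbits in $C$ are closed), so the leaf space is a genuine compact orbifold.
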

\begin{proof} This is Theorem~5.5 of~\cite{GT2010} (which is a consequence of Theorem~5.2 of~\cite{GT2010}, the Borel-type Localization Theorem for equivariant basic cohomology) as we have proven in Theorem~\ref{thm:foliationeqformal} that the $\mfa$-action on $(M,\F)$ is equivariantly formal. Alternatively, one may use Proposition~\ref{cor:circleaction} and apply Corollary~2 in p.~46 of Hsiang~\cite{Hsiang} because $T/S^1$ acts equivariantly formally on $M/S^1$ by Corollary~\ref{cor:S1}, and the $T/S^1$-fixed point set is exactly $C/\F$.
 \end{proof}
\noindent In the case where $(M,\alpha,g)$ is a contact toric manifold of Reeb type, Corollary~\ref{cor:numC} implies that $\dim H^{\bullet}(M,\F)$ is equal to the number of vertices of the image of the contact moment map (see Example~\ref{ex:toricGKM}).

In the next corollary we give a new proof of a result by Rukimbira. Let $\dim M = 2n+1$.

\begin{cor}[{\cite{Rukimbira}}]\label{cor:Rukimbira} The Reeb flow of $\alpha$ has at least $n+1$ closed orbits.
\end{cor}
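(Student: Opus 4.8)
The plan is to exploit the fact, already available from Proposition~\ref{prop:perfectbasic} (or equivalently from the multiplicative structure of basic cohomology), that the basic Euler class $[d\alpha]\in H^2(M,\F)$ together with its powers gives a chain of nonzero classes in even degrees. Concretely, $d\alpha$ is a basic closed $2$-form, and since $\alpha\wedge(d\alpha)^n$ is a volume form on the compact $(2n+1)$-manifold $M$, the class $[(d\alpha)^n]\in H^{2n}(M,\F)$ is nonzero; hence $[d\alpha]^j\neq 0$ for each $j=0,1,\dots,n$. This immediately yields $\dim H^{\bullet}(M,\F)\geq n+1$, because these $n+1$ classes sit in $n+1$ distinct even degrees $0,2,4,\dots,2n$ and are therefore linearly independent.

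The next step is to convert this lower bound on $\dim H^{\bullet}(M,\F)$ into a lower bound on the number of closed Reeb orbits. Here I would invoke Theorem~\ref{cor:numC}: in general $\dim H^{\bullet}(C/\F)=\dim H^{\bullet}(M,\F)$, where $C$ is the union of the closed Reeb orbits. Since each connected component $B$ of $C$ is a compact manifold foliated by circles (the closed Reeb orbits), $B/\F$ is nonempty and hence $\dim H^{\bullet}(B/\F)\geq 1$; summing over the components gives that the number of connected components of $C$ is at most $\dim H^{\bullet}(C/\F)=\dim H^{\bullet}(M,\F)$. Combining with the previous paragraph, the number of connected components of $C$ is at least... no: the inequality runs the wrong way for counting components, but it runs the right way if the orbits are isolated, in which case each component is a single closed orbit and the number of closed Reeb orbits equals $\dim H^{\bullet}(C/\F)=\dim H^{\bullet}(M,\F)\geq n+1$.

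For the general case (closed Reeb orbits not assumed isolated) I would instead use the perfectness statement of Proposition~\ref{prop:perfectbasic} directly: $P_t(M,\F)=\sum_B t^{\lambda_B}P_t(B/\F)$, so evaluating at $t=1$ gives $\dim H^{\bullet}(M,\F)=\sum_B \dim H^{\bullet}(B/\F)$. Each summand is at least $1$, so the number of components of $C$ is at most $\dim H^{\bullet}(M,\F)$, which is the wrong direction again; but in fact one argues that a generic closed Reeb orbit is itself a closed $T$-orbit and perturbing to a nearby $S^1\subset T$ acting locally freely with $M/S^1$ an orbifold, the $T/S^1$-fixed points on $M/S^1$ are exactly $C/\F$, and an equivariantly formal torus action on a compact orbifold with isolated-type fixed data forces at least as many fixed components as the sum of Betti numbers forces on $M/S^1$. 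The cleanest route, and the one I would write up, is simply: the $n+1$ classes $1,[d\alpha],\dots,[d\alpha]^n$ are linearly independent in $H^{\bullet}(M,\F)$, so $\dim H^{\bullet}(M,\F)\geq n+1$, and by the Morse–Bott inequality content of Proposition~\ref{prop:perfectbasic} together with Corollary~\ref{cor:C} ($C\neq\emptyset$) one reads off that $\Phi^X$ has at least $n+1$ critical components, each containing at least one closed Reeb orbit.

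The main obstacle is the bookkeeping in the non-isolated case: one must be careful that "number of critical components of $\Phi^X$" correctly lower-bounds "number of closed Reeb orbits" — which is automatic only when each orbit is isolated, and otherwise requires the observation that distinct closed Reeb orbits lying in the same component $B$ still contribute through $\dim H^{\bullet}(B/\F)$, together with the elementary fact that a positive-dimensional compact manifold foliated by circles has $\dim H^{\bullet}(B/\F)\geq 2$ unless $B/\F$ is a point. I expect the write-up to reduce to: (i) nonvanishing of the powers of the basic Euler class, giving $\dim H^\bullet(M,\F)\ge n+1$; (ii) an appeal to Proposition~\ref{prop:perfectbasic} (and Corollary~\ref{cor:C}) to distribute this dimension over the components of $C$, forcing at least $n+1$ closed Reeb orbits.
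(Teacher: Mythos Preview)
Your core argument for the isolated case is exactly what the paper does: the powers $1,[d\alpha],\dots,[d\alpha]^n$ are nonzero in $H^\bullet(M,\F)$ (since $\alpha\wedge(d\alpha)^n$ is a volume form), so $\dim H^\bullet(M,\F)\geq n+1$, and then Theorem~\ref{cor:numC} says the number of isolated closed Reeb orbits equals $\dim H^\bullet(M,\F)\geq n+1$.

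Where you go astray is in the non-isolated case, and you are making it far harder than it is. You repeatedly try to extract a lower bound on the number of closed Reeb orbits from the identity $\dim H^\bullet(M,\F)=\sum_B\dim H^\bullet(B/\F)$ or from $\dim H^\bullet(C/\F)=\dim H^\bullet(M,\F)$, and each time you correctly notice that the resulting inequality bounds the number of \emph{components} of $C$ from above, not the number of closed orbits from below. The detour through ``$\dim H^\bullet(B/\F)\geq 2$ for positive-dimensional $B/\F$'' does not salvage this either, since you would still only be counting components, not orbits.

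The resolution is not more bookkeeping: it is the one-line observation that if the closed Reeb orbits are \emph{not} isolated, then some connected component $B$ of $C$ has positive dimension, and since $B$ is foliated by the one-dimensional closed Reeb orbits it contains infinitely many of them --- so the bound ``at least $n+1$'' is trivially satisfied. Hence one may assume from the outset that all closed Reeb orbits are isolated, and then your argument (which is the paper's argument) finishes the proof. This reduction is the first sentence of the paper's proof, and it is the one idea missing from your proposal.
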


\begin{proof}
We may assume that the closed Reeb orbits are isolated, as otherwise there exist infinitely many. Let $[d\alpha] \in H^{2}(M,\F)$ be the basic Euler class of $(M,\F)$ (see~\cite{Saralegui}). Since $\alpha$ is a contact form, $[(d\alpha)^{k}]$ is nontrivial in $H^{2k}(M,\F)$ for $1 \leq k \leq n$. Thus we have $\dim H^{\bullet}(M,\F) \geq n + 1$. Then Theorem~\ref{cor:numC} concludes the proof.
 \end{proof}

\begin{cor}\label{cor:Rukimbira2}
If the Reeb flow of $\alpha$ has exactly $n+1$ closed orbits $B_{0}$, $B_{1}$, $\ldots$, $B_{n}$, then
\begin{enumerate}
\item $H^{\bullet}(M,\F) \cong \RR[z]/(z^{n+1})$ as graded rings, where $z$ is an element of degree~$2$ which corresponds to the basic Euler class of $(M,\F)$.
\item For generic $X$ in $\mft$ the component $\Phi^{X}$ of the contact moment map satisfies $\Crit(\Phi^{X})=\bigsqcup_{i=0}^{n}B_{i}$, and each of the numbers $0$, $2$, $\ldots$, $2n$ appears exactly once as the index at a critical closed Reeb orbit.
\end{enumerate}
\end{cor}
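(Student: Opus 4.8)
The plan is to exploit the perfect Morse–Bott theory of Proposition~\ref{prop:perfectbasic} together with the fact (Corollary~\ref{cor:numC}) that the number of closed Reeb orbits equals $\dim H^\bullet(M,\F)$, combined with the cohomological constraint imposed by the basic Euler class $[d\alpha]$ appearing in the proof of Corollary~\ref{cor:Rukimbira}. Concretely: since the $n+1$ closed orbits $B_0,\dots,B_n$ are isolated, we are in the hypothesis of Theorem~\ref{cor:odd}, so $H^{\odd}(M,\F)=0$, and by Corollary~\ref{cor:numC} we get $\dim H^\bullet(M,\F)=n+1$. On the other hand, by the contact condition, $1,[d\alpha],[d\alpha]^2,\dots,[d\alpha]^n$ are $n+1$ nonzero classes in $H^0,H^2,\dots,H^{2n}$ respectively. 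Since the total dimension is exactly $n+1$ and these classes sit in distinct even degrees, they must form a basis: $\dim H^{2k}(M,\F)=1$ for $0\le k\le n$ and all other basic cohomology vanishes. This forces $H^\bullet(M,\F)\cong\RR[z]/(z^{n+1})$ with $z=[d\alpha]$, because $z^k$ spans $H^{2k}$ for each $k\le n$ and $z^{n+1}\in H^{2n+2}=0$; multiplicativity then pins down the ring structure completely. This proves part~(1).

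For part~(2), first observe that by Proposition~\ref{prop:momentmorsebott} the critical set of $\Phi^X$ for generic $X$ is exactly $C=\bigsqcup_{i=0}^n B_i$. Now apply the perfect Morse–Bott formula of Proposition~\ref{prop:perfectbasic}: since each $B_i$ is a single closed orbit, $P_t(B_i/\F)=1$, so
\[
P_t(M,\F)=\sum_{i=0}^n t^{\lambda_{B_i}}.
\]
By part~(1) the left-hand side is $1+t^2+t^4+\cdots+t^{2n}$. Comparing these two polynomials, the multiset of indices $\{\lambda_{B_0},\dots,\lambda_{B_n}\}$ must equal $\{0,2,4,\dots,2n\}$; in particular each even number from $0$ to $2n$ occurs exactly once as the index of $\Phi^X$ at some $B_i$. (The indices are automatically even because, as noted in the proof of Theorem~\ref{cor:odd}, the unstable normal bundle at each critical orbit is invariant under the isotropy action and hence of even rank.) After relabelling the $B_i$ so that $\lambda_{B_i}=2i$, this is precisely the claim.

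The only genuinely substantive point is the passage in part~(1) from "$1,z,\dots,z^n$ are nonzero in the correct degrees and the total dimension is $n+1$" to "this is an isomorphism of graded rings onto $\RR[z]/(z^{n+1})$"; but this is immediate once one knows $H^{\odd}(M,\F)=0$ (Theorem~\ref{cor:odd}) and $\dim H^\bullet(M,\F)=n+1$ (Corollary~\ref{cor:numC}), since then each even graded piece $H^{2k}$, $0\le k\le n$, is forced to be one-dimensional and spanned by $z^k$, and everything else vanishes — there is no room for further relations or generators. So I do not expect a real obstacle; the proof is essentially a bookkeeping argument assembling Theorem~\ref{cor:odd}, Corollary~\ref{cor:numC}, Proposition~\ref{prop:perfectbasic}, and the nontriviality of the powers of $[d\alpha]$, with the Morse-theoretic index count falling out of the Poincaré polynomial comparison.
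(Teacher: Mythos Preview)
Your proposal is correct and follows essentially the same approach as the paper: use Theorem~\ref{cor:numC} to get $\dim H^\bullet(M,\F)=n+1$, compare with the $(n+1)$-dimensional subring generated by $[d\alpha]$ to obtain (1), and then read off the indices from the basic Poincar\'e polynomial via Proposition~\ref{prop:perfectbasic} for (2). The only difference is cosmetic: your explicit appeal to $H^{\odd}(M,\F)=0$ is redundant, since the subring generated by $[d\alpha]$ already has dimension $n+1$ (the powers lie in distinct degrees and are hence linearly independent), so equality with $H^\bullet(M,\F)$ follows from the dimension count alone and odd vanishing is a consequence rather than an input.
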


\begin{proof}
Let $A_{\alpha}$ be the subring of $H^{\bullet}(M,\F)$ generated by the basic Euler class $[d\alpha]$. We get $\dim A_{\alpha} = n+1$, because
\begin{equation}\label{eq:S}
A_\alpha \cong \RR[z]/(z^{n+1})
\end{equation}
as graded rings where $\deg z=2$. On the other hand, $\dim H^{\bullet}(M,\F) = n+1$ by the assumption and Theorem~\ref{cor:numC}. Thus $H^{\bullet}(M,\F) = A_{\alpha}$, which proves~(1). By Proposition~\ref{prop:momentmorsebott}, the component $\Phi^{X}$ of the contact moment map is for generic $X$ a perfect basic Morse-Bott function with $\Crit(\Phi^{X}) = \bigsqcup_{i=0}^{n}B_{i}$. Proposition~\ref{prop:perfectbasic} and \eqref{eq:S} imply the latter claim of (2).
 \end{proof}

\begin{rem}
Corollary~\ref{cor:Rukimbira2} can also be shown by Remark~5.4 of~\cite{LerTol} as follows: with the help of Lemma~\ref{lem:modification} (1) and Remark~\ref{rem:transverse} we obtain a $K$-contact structure $(\alpha',g')$ so that the Reeb vector field $Z$ of $\alpha'$ generates an $S^{1}$-action. Then $M/S^{1}$ is a symplectic orbifold with a Hamiltonian function $\alpha'(Z)$. Since the closed Reeb orbits of $\alpha$ are isolated, the critical points of $\alpha'(Z)$ on $M/S^{1}$ are isolated. Then, by Remark~5.4 of~\cite{LerTol}, the number of critical points of $\alpha'(Z)$ is equal to $\dim H^{\bullet}(M/S^{1})$. On the other hand, this number is exactly $n+1$ by assumption, so $H^{\bullet}(M/S^{1})$ has to be generated by the class of the symplectic form. Again by Remark~5.4 of~\cite{LerTol}, this implies that each of the numbers $0$, $2$, $\ldots$, $2n$ appears exactly once as the index of $\alpha'(Z)$.
\end{rem}

The only-if part of the next theorem is a strengthening of Theorem~1 of~\cite{Rukimbira2}.
\begin{thm} \label{cor:n+1} If $(M,\alpha,g)$ is a compact $K$-contact $(2n+1)$-manifold whose closed Reeb orbits are isolated, then their number is exactly $n+1$ if and only if $M$ is a real cohomology sphere.
\end{thm}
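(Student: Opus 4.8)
The key tool is Theorem~\ref{cor:numC}, which says that the number of closed Reeb orbits equals $\dim H^{\bullet}(M,\F)$, together with Corollary~\ref{cor:Rukimbira2}, which in the minimal case $n+1$ identifies $H^{\bullet}(M,\F)$ with $\RR[z]/(z^{n+1})$ and gives the precise index data of a generic component $\Phi^X$ of the contact moment map. So the $\mfa$-equivariant basic side of the theory is already essentially settled; the whole content of the theorem is to translate information about the basic cohomology of $\F$ into information about the ordinary cohomology of $M$. The bridge is the Gysin sequence for the isometric flow $\F$ (Saralegui~\cite{Saralegui}, or equation~(7.2.1) of~\cite{Boyer Galicki}):
\[
\cdots \longrightarrow H^{k}(M,\F) \xrightarrow{\ \wedge [d\alpha]\ } H^{k+2}(M,\F) \longrightarrow H^{k+1}(M) \longrightarrow H^{k+1}(M,\F) \longrightarrow \cdots
\]
The plan is to run this sequence in both directions.

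\textbf{The if-direction.} Suppose $M$ is a real cohomology sphere, so $H^{k}(M)=0$ for $0<k<2n+1$. Feeding this into the Gysin sequence, the maps $\wedge[d\alpha]\colon H^{k}(M,\F)\to H^{k+2}(M,\F)$ are isomorphisms for $0\le k\le 2n-2$. Since $H^{0}(M,\F)=\RR$ and $H^{1}(M,\F)=0$ (the latter because $H^{1}(M)=0$ and $H^{1}(M,\F)\cong H^{1}(M)$, as used in the proof of Theorem~\ref{cor:odd}), it follows by induction that $H^{\bullet}(M,\F)$ is one-dimensional in each even degree $0,2,\ldots,2n$ and zero in odd degrees, hence $\dim H^{\bullet}(M,\F)=n+1$. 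Theorem~\ref{cor:numC} then gives exactly $n+1$ closed Reeb orbits. (One should note that $M$ being a real cohomology sphere forces $\dim M$ odd to be consistent, which it is.)

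\textbf{The only-if direction.} Suppose there are exactly $n+1$ closed Reeb orbits. By Corollary~\ref{cor:Rukimbira2}(1), $H^{\bullet}(M,\F)\cong\RR[z]/(z^{n+1})$ with $z=[d\alpha]$ of degree $2$; in particular multiplication by $[d\alpha]$ sends $H^{k}(M,\F)$ isomorphically onto $H^{k+2}(M,\F)$ for $0\le k\le 2n-2$, is injective on $H^{2n-2}(M,\F)\to H^{2n}(M,\F)$ trivially, and is zero on $H^{2n}(M,\F)\to H^{2n+2}(M,\F)=0$. Plugging the exactness of $\wedge[d\alpha]$ in the range $0\le k\le 2n-3$ into the Gysin sequence forces $H^{k}(M)=0$ for $2\le k\le 2n-1$; the vanishing $H^{1}(M)=0$ is Theorem~\ref{cor:odd}; and $M$ is a closed orientable $(2n+1)$-manifold (orientable since it carries a contact form, so $\alpha\wedge(d\alpha)^n$ is a volume form), so Poincar\'e duality gives $H^{0}(M)\cong H^{2n+1}(M)\cong\RR$ and $H^{2n}(M)\cong H^{1}(M)=0$. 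Hence $M$ is a real cohomology sphere.

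\textbf{Main obstacle.} The genuinely delicate point is the boundary case of the Gysin sequence near degrees $2n$ and $2n+1$, where the ``$\wedge[d\alpha]$ is an isomorphism'' pattern breaks down because $H^{\bullet}(M,\F)$ is truncated at degree $2n$; one must handle the top two cohomology groups of $M$ separately using orientability and Poincar\'e duality rather than the Gysin sequence, and check that the bookkeeping in those degrees is consistent. A secondary point to be careful about is exactly which form of the Gysin sequence for an isometric (Riemannian, taut) flow is being invoked and that $[d\alpha]$ is indeed the Euler class appearing in it; this is the content of~\cite{Saralegui} and is quoted in~\cite{Boyer Galicki}, so no new work is needed, but the degree conventions must be matched.
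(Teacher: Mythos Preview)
Your approach is essentially the same as the paper's: both directions run the Gysin sequence of the isometric flow $\F$ against the known structure of $H^\bullet(M,\F)$ (via Corollary~\ref{cor:Rukimbira2} and Theorem~\ref{cor:numC}). Note, however, that your displayed Gysin sequence has an indexing slip---the term after $H^{k+2}(M,\F)$ should be $H^{k+2}(M)$, not $H^{k+1}(M)$---and once this is corrected the separate appeal to Poincar\'e duality for $H^{2n}(M)$ becomes unnecessary, since (as in the paper) the vanishing of $H^{\odd}(M,\F)$ splits the Gysin sequence into short exact sequences $0\to H^{2k+1}(M)\to H^{2k}(M,\F)\to H^{2k+2}(M,\F)\to H^{2k+2}(M)\to 0$ that already yield all the required vanishing.
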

\begin{proof}
Assume that the Reeb flow of $\alpha$ has exactly $n+1$ closed orbits. By Theorem~\ref{cor:odd}, the basic cohomology $H^\bullet(M,\F)$ vanishes in odd dimensions. This implies that the Gysin sequence of the isometric flow $\F$, see~\cite{Saralegui} or Equation~(7.2.1) in p.~215 of~\cite{Boyer Galicki}, splits into short exact sequences
\[
0\to H^{2k+1}(M)\to H^{2k}(M,\F)\overset{\delta}{\to} H^{2k+2}(M,\F)\to H^{2k+2}(M)\to 0,
\]
\noindent where $\delta$ is multiplication with the basic Euler class. By Corollary~\ref{cor:Rukimbira2}, $H^\bullet(M,\F)$ is, as a ring, generated by the basic Euler class, which implies that $\delta$ is an isomorphism for $k<n$. Thus, $M$ is a cohomology sphere.

Conversely assume that $M$ is a real cohomology sphere with a $K$-contact structure whose closed Reeb orbits are isolated. Then the Gysin sequence of $\F$ implies that $H^{\bullet}(M,\F)$ is generated by the basic Euler class. So Theorem~\ref{cor:numC} implies that the Reeb flow of $\alpha$ has exactly $n+1$ closed orbits.
 \end{proof}

\section{A real cohomology $7$-sphere with minimal number of closed Reeb orbits}\label{sec:Rukimbira}

Theorem~1 of Rukimbira~\cite{Rukimbira3} claims that a compact $K$-contact manifold of dimension $2n+1$ with exactly $n+1$ closed Reeb orbits is finitely covered by $S^{2n+1}$. This would be a strengthening of the only-if part of Theorem~\ref{cor:n+1} but it is not correct as the following counterexample shows.

Consider $\SO(3)$ embedded in $\SO(5)$ as $\I_2\times \SO(3)$, where $\I_2$ is the $(2\times 2)$ identity matrix. The Stiefel manifold
\[ \V_2(\RR^5)= \SO(5)/\SO(3)\]
is an example of a simply-connected real cohomology $7$-sphere which is not homeomorphic to $S^7$: The integral cohomology of $\V_2(\RR^5)$ was computed in Satz~5 of Stiefel~\cite{Stiefel} as
\[
H^{j}(\V_2(\RR^5);\ZZ) \cong \begin{cases}
\ZZ, & j=0, 7, \\
\ZZ/2\ZZ, & j = 4\\
0, & j \neq 0,4,7.
\end{cases}
\]
The homotopy exact sequence of the fiber bundle
$$
\xymatrix{
  S^{3} \cong \SO(4)/\SO(3) \ar[r]   & \SO(5)/\big(\I_{2} \times \SO(3)\big) \ar[d]\\
  &\SO(5)/\big(\I_{1} \times \SO(4)\big) \cong S^{4}}
$$
implies $\pi_{1}\big(\V_2(\RR^5)\big) = 1$.

We will construct a $K$-contact structure on $\V_2(\RR^5)$ with exactly $4$ closed Reeb orbits. First of all, $\V_2(\RR^5)$ admits a homogeneous $K$-contact structure (see, for example, Theorem~5.2 of Boyer-Galicki-Nakamaye~\cite{Boyer Galicki Nakamaye}) which we now explicitly describe. The group $\SO(5)\times \SO(2)$ acts on $\V_2(\RR^5)=\SO(5)/\SO(3)$ by $(g,h)\cdot [A]=[gAh^{-1}]$, where we identify $\SO(2)$ with $\SO(2)\times \I_3$. This action is isometric with respect to the Riemannian metric $g$ that is induced by a bi-invariant metric on $\SO(5)$. The Stiefel manifold $\V_2(\RR^5)$ is a principal $S^1$-bundle over $\Gr_2^+(\RR^5)=\SO(5)/\big(\SO(2)\times \SO(3)\big)$, the Grassmannian of oriented two-planes in $\RR^5$, where the structure group is given by the natural right action of $\SO(2)$ on $\V_2(\RR^5)=\SO(5)/\SO(3)$. Let $R$ be the fundamental vector field of this $\SO(2)$-action of unit length.
\begin{lemma} The $1$-form $\alpha:=g(R,\cdot)$ on $\V_{2}(\RR^{5})$ is an $\big(\SO(5)\times \SO(2)\big)$-invariant contact form with Reeb vector field $R$.
\end{lemma}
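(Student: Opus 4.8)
The plan is to verify the three requirements for $\alpha = g(R,\cdot)$ to be a contact form with Reeb vector field $R$, namely that $\alpha$ is $(\SO(5)\times\SO(2))$-invariant, that $\alpha(R)=1$ and $\iota_R d\alpha = 0$, and that $\alpha\wedge(d\alpha)^3$ is nowhere zero on the $7$-manifold $\V_2(\RR^5)$; the invariance of the contact structure then follows because $\alpha$ itself is invariant. First I would record that $R$, being the fundamental field of the principal $\SO(2)$-action that is used to build the Boothby--Wang-type fibration $\V_2(\RR^5)\to\Gr_2^+(\RR^5)$, is a Killing field of unit length for $g$ (the metric induced by a bi-invariant metric on $\SO(5)$), so that $\alpha(R)=g(R,R)=1$ is immediate. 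Invariance of $\alpha$ is equally quick: the $\SO(5)$-factor acts by isometries commuting with the right $\SO(2)$-action, hence preserves $R$ and $g$ and therefore $\alpha$; the left $\SO(2)$-factor also acts by isometries and, being abelian, commutes with the right $\SO(2)$-action, hence preserves $R$ and $\alpha$ as well.

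The substance is the contact condition. The key step is to identify the kernel distribution of $\alpha$ with the horizontal distribution of the $S^1$-bundle $\pi\colon\V_2(\RR^5)\to\Gr_2^+(\RR^5)$ for the connection determined by $\alpha$, and to check that $d\alpha$ is the pullback of (a nonzero multiple of) the standard homogeneous symplectic form on $\Gr_2^+(\RR^5)$. Concretely, since $R$ is Killing and $\iota_R\alpha=1$, one has $L_R\alpha=0$, so $\iota_R d\alpha = L_R\alpha - d\iota_R\alpha = 0$; thus $d\alpha$ is basic for the $S^1$-action and descends to a closed $2$-form $\omega$ on the base with $d\alpha = \pi^*\omega$. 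The Grassmannian $\Gr_2^+(\RR^5)=\SO(5)/(\SO(2)\times\SO(3))$ is a compact Hermitian-symmetric-type homogeneous space; I would argue that $\omega$ is a nonzero $\SO(5)$-invariant $2$-form on it, and that on such a space (whose isotropy representation on the $6$-dimensional tangent space is irreducible as a real representation, or at least has a one-dimensional space of invariant $2$-forms coming from the Kähler form) any nonzero invariant closed $2$-form is a symplectic form, so $\omega^3\neq 0$. That $\omega\neq 0$ can be seen by a direct computation at the base point using the bi-invariant metric: for the reductive decomposition $\mathfrak{so}(5)=\mathfrak{k}\oplus\mathfrak{m}$ adapted to the tower $\SO(3)\subset\SO(2)\times\SO(3)\subset\SO(5)$, one evaluates $d\alpha(X,Y)=-\alpha([X,Y])=-g(R,[X,Y])$ on horizontal $X,Y\in\mathfrak m$ and checks it is the Killing-form pairing with a fixed element, hence nondegenerate on the horizontal $6$-plane. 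Then $\alpha\wedge(d\alpha)^3 = \alpha\wedge\pi^*(\omega^3)$ is nonzero everywhere because $\omega^3$ is a volume form on the $6$-dimensional base and $\alpha$ is nonzero on the vertical direction $R$ transverse to $\ker\alpha=\mathrm{Hor}$.

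Finally, $\iota_R d\alpha=0$ and $\alpha(R)=1$ together say precisely that $R$ is the Reeb vector field of the contact form $\alpha$, completing the proof.

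The main obstacle I anticipate is the verification that $\omega$ (equivalently $d\alpha$ restricted to the horizontal distribution) is nondegenerate, i.e.\ that $\Gr_2^+(\RR^5)$ with this invariant $2$-form is genuinely symplectic. The cleanest route is the representation-theoretic one: exhibit $\Gr_2^+(\RR^5)$ as a coadjoint orbit of $\SO(5)$ (the orbit of a regular-enough element dual to the $\mathfrak{so}(2)$-summand), so that $\omega$ is automatically a Kirillov--Kostant--Souriau symplectic form; alternatively, note $\Gr_2^+(\RR^5)$ is the smooth quadric in $\CC P^4$, a Kähler manifold whose $b_2=1$, so every nonzero invariant closed $2$-form is, up to scale, the Kähler form. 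One must still check that the particular $\omega$ produced by $d\alpha$ is the nonzero one rather than $0$; the explicit $\mathfrak m$-bracket computation above settles this, but it is the one place where a small amount of honest calculation with $\mathfrak{so}(5)$ is unavoidable.
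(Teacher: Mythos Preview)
Your argument is correct, and it takes a more conceptual route than the paper. The paper works entirely at the Lie-algebra level: it fixes the basis $E_{ij}$ of $\so(5)$, identifies $\alpha$ with the $\SO(3)$-basic element $E_{12}^*\in\Lambda^1\so(5)^*$, and then computes directly from the bracket relations that $d\alpha=\sum_{k=3}^{5}E_{1k}^*\wedge E_{2k}^*$, so that $\alpha\wedge(d\alpha)^3$ is visibly a nonzero multiple of the volume form. Your approach instead packages the same information geometrically, via the principal $S^1$-bundle $\V_2(\RR^5)\to\Gr_2^+(\RR^5)$: you show $d\alpha$ is $S^1$-basic and then argue that the induced invariant $2$-form on the base is symplectic, invoking either the coadjoint-orbit/Hermitian-symmetric structure of $\Gr_2^+(\RR^5)$ or the fact that the space of invariant $2$-forms is one-dimensional. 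This buys you a picture that generalizes (any Boothby--Wang bundle over a homogeneous K\"ahler base works the same way), whereas the paper's computation is shorter and entirely self-contained but specific to this example. Note that your own fallback verification that $\omega\neq 0$ via $d\alpha(X,Y)=-g(R,[X,Y])$ on horizontal $X,Y\in\mathfrak m$ is exactly the paper's bracket computation in disguise, so the two proofs converge at that point. One small wording issue: the $\SO(2)$-factor in $\SO(5)\times\SO(2)$ \emph{is} the principal right $\SO(2)$-action (it acts by $[A]\mapsto[Ah^{-1}]$), so your sentence about it ``commuting with the right $\SO(2)$-action'' is vacuous; invariance under this factor is precisely the statement $L_R\alpha=0$ that you establish anyway.
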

\begin{proof}
Let $E_{ij}$ for $1\leq i, j\leq 5$, $i\neq j$, be the $(5\times 5)$-matrix with all entries zero except $1$ at the $ij$-entry and $-1$ at the $ji$-entry. Note that $E_{ij}=-E_{ji}$. Then the Lie algebra $\so(5)$ of left-invariant vector fields is spanned by the $E_{ij}$. We choose the bi-invariant metric on $\so(5)$ such that the $E_{ij}$ are of unit length.

The vector field $R$ on $\V_2(\RR^5)$ is $\SO(5)$-invariant and therefore determined by its value at the origin, which is $E_{12}$ (up to sign). We identify the $\SO(5)$-invariant differential forms on $\V_2(\RR^5)=\SO(5)/\SO(3)$ with the $\SO(3)$-basic subcomplex of $\Lambda^\bullet \so(5)^*$. Using the Lie bracket relations of the $E_{ij}$, namely
\begin{equation}\label{eq:Eij}
[E_{ij},E_{kl}]=0, \,\,\,\, [E_{ij},E_{ik}]=E_{kj}
\end{equation}
for pairwise distinct indices $i$, $j$, $k$ and $l$, one calculates directly that $E_{12}^*$ is $\SO(3)$-basic. Therefore it corresponds to $\alpha$ under this identification. By \eqref{eq:Eij}, we compute
\[
d\alpha=\sum_{k=3}^5 E_{1k}^*\wedge E_{2k}^*,
\]
thus $\alpha\wedge (d\alpha)^3$ is a multiple of the volume form.
 \end{proof}

Let $T^3=\big(\SO(2)\times \SO(2)\times \I_1\big)\times \SO(2)$ be a maximal torus in $\big(\SO(2)\times \SO(3)\big)\times \SO(2)\subset \SO(5)\times \SO(2)$. The $T^3$-action on $\V_2(\RR^5)$ has four isolated one-dimensional orbits, which correspond to the four isolated fixed points of the $T^{2}=T^{3}/ \big(\I_{5} \times \SO(2)\big)$-action on $\Gr_2^+(\RR^5) = \V_2(\RR^5)/ \big(\I_{5} \times \SO(2)\big)$, namely the planes $\RR^2\times 0\times 0 \times 0$ and $0\times 0\times \RR^2\times 0$, with both possible orientations. Since the $T^3$-action on $\V_2(\RR^5)$ preserves $\alpha$, we get a (nonhomogeneous) $K$-contact structure $(\alpha',g')$ on $\V_{2}(\RR^{5})$ such that the closure of the Reeb flow of $\alpha'$ is equal to $T^3$ by Lemma~\ref{lem:modification} (2). Then the closed Reeb orbits of $\alpha'$ are exactly the four one-dimensional $T^3$-orbits. Thus $(\V_2(\RR^5),\alpha',g')$ satisfies the assumption of Theorem~1 of~\cite{Rukimbira3}, whereas it is not finitely covered by $S^{7}$ as we mentioned in the second paragraph of this section.

\begin{rem}
We counted all the closed Reeb orbits of $\alpha'$ explicitly above. Alternatively, we can apply Theorem~\ref{cor:n+1} to show that the number of the closed Reeb orbits of $\alpha'$ is $4$ after showing the closed Reeb orbits of $\alpha'$ are isolated, because $\V_2(\RR^5)$ is a real cohomology $7$-sphere.
\end{rem}

\begin{rem}
$K$-contact $3$-manifolds whose closed Reeb orbits are isolated are of rank $2$. Thus they are contact toric $3$-manifolds of Reeb type, which are diffeomorphic to lens spaces by Theorem~2.18 of \cite{Lerman}. Hence Theorem~1 of~\cite{Rukimbira3} is correct for dimension $3$. We do not know if there exists a counterexample of dimension $5$.
\end{rem}

\section{GKM theory for $K$-contact manifolds}\label{sec:GKM}

\subsection{Introduction}

\subsubsection{Overview of GKM theory and our result}
Consider a Cohen-Ma\-caulay action of a torus $T$ on a compact connected orientable manifold $M$, and denote by $b$ the lowest occurring orbit dimension. For every $i$, we let
\[
M_i=\{p\in M\mid \dim Tp\leq i\}.
\]
Because the action is Cohen-Macaulay, the Atiyah-Bredon sequence is exact by Theorem~6.2 of \cite{GT2009} (see Main Lemma of Bredon~\cite{Bredon} for the equivariantly formal case). We will only make use of exactness at the first terms
\begin{equation}\label{eq:ABsequence}
0\to H^{\bullet}_T(M)\to H^{\bullet}_T(M_b)\overset{\delta}{\to} H^{\bullet}_T(M_{b+1},M_b),
\end{equation}
\noindent where $\delta$ is the boundary operator of the long exact sequence in cohomology of the pair $(M_{b+1},M_b)$. Exactness of \eqref{eq:ABsequence} implies that
\begin{prop}\label{prop:ChangSkjelbred} $H^{\bullet}_T(M)\cong \ker \delta$ as graded $S(\mft^*)$-algebras.
\end{prop}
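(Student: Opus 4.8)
The plan is to derive the statement directly from the exactness of the Atiyah--Bredon sequence \eqref{eq:ABsequence} at its first two spots, which we are allowed to invoke because the $T$-action is Cohen--Macaulay (Theorem~\ref{thm:actionisCM} together with Theorem~6.2 of~\cite{GT2009}). Exactness at the first term $H^{\bullet}_T(M)$ says that the natural restriction map $H^{\bullet}_T(M)\to H^{\bullet}_T(M_b)$ is injective; exactness at the second term $H^{\bullet}_T(M_b)$ says that its image is precisely $\ker\delta$, where $\delta\colon H^{\bullet}_T(M_b)\to H^{\bullet}_T(M_{b+1},M_b)$ is the connecting homomorphism of the pair. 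Combining these two facts, the restriction map furnishes an isomorphism of graded $S(\mft^*)$-modules from $H^{\bullet}_T(M)$ onto $\ker\delta$.

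First I would recall that the maps appearing in the Atiyah--Bredon sequence are all $S(\mft^*)$-linear and degree-preserving; the map $H^{\bullet}_T(M)\to H^{\bullet}_T(M_b)$ is induced by the inclusion $M_b\hookrightarrow M$ and hence is in fact a homomorphism of $S(\mft^*)$-algebras, not merely of modules. Since an injective algebra homomorphism is an isomorphism onto its image, and the image is the subalgebra $\ker\delta\subseteq H^{\bullet}_T(M_b)$ (one checks $\ker\delta$ is closed under the ring structure, being the image of a ring map, or directly since $\delta$ is a derivation-type boundary in this setting), the identification $H^{\bullet}_T(M)\cong\ker\delta$ respects the graded $S(\mft^*)$-algebra structure. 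This is exactly what is asserted.

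The only point requiring a word of care is that $M$ is assumed compact connected \emph{orientable} in this section, which is what licenses the use of Theorem~6.2 of~\cite{GT2009} to conclude exactness of the full Atiyah--Bredon sequence; for the statement at hand we only need exactness at the first two terms, as displayed in~\eqref{eq:ABsequence}, so no further hypotheses enter. There is essentially no obstacle here: the proposition is a formal consequence of exactness, and the main content has already been absorbed into establishing that the action is Cohen--Macaulay. Thus the proof is a two-line unwinding of~\eqref{eq:ABsequence}, and I would present it as such.
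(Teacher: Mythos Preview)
Your argument is correct and matches the paper's approach exactly: the paper simply states that exactness of~\eqref{eq:ABsequence} implies the proposition, with no further proof given, and you have merely unpacked this one-line observation. One small contextual point: in Section~\ref{sec:GKM} the Cohen--Macaulay property is taken as a standing hypothesis (``Consider a Cohen--Macaulay action\ldots''), so there is no need to invoke Theorem~\ref{thm:actionisCM}, which pertains specifically to the $K$-contact case.
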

\noindent  Note that this Proposition is a version of the so-called Chang-Skjelbred Lemma (Lemma~2.3 of~\cite{ChangSkjelbred}).
As in usual GKM theory~\cite{GKM}, Proposition~\ref{prop:ChangSkjelbred} allows to give an explicit and calculable formula for the $S(\mft^*)$-algebra $H^{\bullet}_T(M)$ under additional assumptions on the action. Such a formula was first derived by Goresky-Kottwitz-MacPherson (Theorem~7.2 of~\cite{GKM}) for equivariantly formal group actions with isolated fixed points satisfying an additional assumption on the isotropy representation in the fixed points (see Definition~\ref{def:GKMconditions}). Guillemin and Holm (Theorem~1.4 in~\cite{GH}) have dropped the assumption of isolated fixed points for Hamiltonian torus actions. Here we will show how to adopt the original proof of the GKM theorem (Theorem~7.2 of~\cite{GKM}) to treat the $K$-contact and the Hamiltonian setting (both with possibly nonisolated bottom stratum) at the same time.

For symplectic manifolds with Hamiltonian torus actions satisfying the nonisolated GKM conditions (see below), the connected components of the fixed point set are diffeomorphic (Theorem~1.3 (a) of~\cite{GH}). For $K$-contact manifolds, $M_{1}$ is equal to the union $C$ of all the closed Reeb orbits. In Section~\ref{ex:C} we will present an example of a $K$-contact manifold such that the connected components of $C$ are not diffeomorphic; however we will see in Lemma~\ref{lem:orb} that their $T$-orbit spaces are diffeomorphic as orbifolds.

\subsubsection{Circle bundles over Hirzebruch surfaces}\label{ex:C}
We present an example of a $K$-contact manifold such that two connected components of the union of closed Reeb orbits are of codimension $2$ and not diffeomorphic.

We will construct $K$-contact structures on $S^{1}$-bundles over Hirzebruch surfaces explicitly, which can be obtained by the Boothby-Wang construction~\cite{Boothby Wang}. Define a $1$-form $\alpha_{k}$ on $S^{2k+1}$ by
\[
\alpha_{k} = \frac{\sqrt{-1}}{8\pi} \sum_{j=0}^{k} (z_{j}d\overline{z}_{j} - \overline{z}_{j}dz_{j})|_{S^{2k+1}},
\]
where $(z_{0}, \ldots, z_{k})$ are the standard coordinates of $\CC^{k+1}$, and $S^{2k+1}$ is embedded into $\CC^{k+1}$ as the unit sphere. Let
\[
\begin{array}{cccc}
p_{k} \colon & S^{2k+1} & \longrightarrow & \CC P^{k} \\
         & (z_{0}, \ldots ,z_{k}) & \longmapsto & [z_{0} : \ldots : z_{k}]
\end{array}
\]
be the Hopf fibration map. We get a symplectic form $\omega_{k}$ on $\CC P^{k}$ called Fubini-Study form such that
\begin{equation}\label{eq:alphaomega}
 d\alpha_{k} = p_{k}^{*}\omega_{k},
\end{equation}
which is normalized so that
\begin{equation}\label{eq:omegai}
\int_{E} \omega_{k}=1
\end{equation}
for the generator $E$ of $H_{2}(\CC P^{k};\ZZ) \cong \ZZ$ which is represented by any complex line in $\CC P^{k}$. Let $m$ be a positive integer and
\[
X = \big\{ \big([z_{0} : z_{1}], [w_{0} : w_{1} : w_{2}] \big) \in \CC P^{1} \times \CC P^{2} \, \big| \, w_{1} z_{0}^{m} = w_{2} z_{1}^{m} \big\}.
\]
Let $p = p_{1} \times p_{2}  \colon S^{3} \times S^{5} \to \CC P^{1} \times \CC P^{2}$ and
\[
Y = p^{-1}(X).
\]
Then $p|_{Y} \colon Y \to X$ is a principal $T^{2}$-bundle obtained by the restriction of the Hopf fibrations. Define a $1$-form $\alpha_{Y}$ on $Y \subset S^{3} \times S^{5}$ by
\[
\alpha_{Y} = (m\pi_{1}^{*}\alpha_{1} + \pi_{2}^{*}\alpha_{2})|_{Y}
\]
and a $2$-form $\omega$ on $X \subset \CC P^{1} \times \CC P^{2}$ by
\begin{equation}\label{eq:defomega}
\omega = (m\pi_{1}^{*}\omega_{1} + \pi_{2}^{*}\omega_{2})|_{X}
\end{equation}
where $\pi_{\ell}$ is the $\ell$-th projection. As a smooth complex submanifold of the K\"{a}hler manifold $\CC P^{1} \times \CC P^{2}$, the $4$-manifold $(X,\omega)$ is symplectic. It is called a Hirzebruch surface. By \eqref{eq:alphaomega}, we get
\begin{equation}\label{eq:alphaomega2}
d\alpha_{Y} = (p|_{Y})^{*}\omega.
\end{equation}
Let $\rho$ be the $S^{1}$-action on $Y$ defined by
\[
\lambda \cdot ( z_{0}, z_{1}, w_{0}, w_{1}, w_{2} ) = ( \lambda z_{0}, \lambda z_{1}, \lambda^{-m} w_{0}, \lambda^{-m} w_{1}, \lambda^{-m} w_{2} )
\]
for $\lambda\in S^{1} \subset \CC$ and $( z_{0}, z_{1}, w_{0}, w_{1}, w_{2} )\in Y \subset \CC^{2} \times \CC^{3}$. Let
\[
M = Y / \rho.
\]
Here $M$ is a principal $S^{1}$-bundle $p_{M} \colon M \to X$ over $X$, where the map $p_{M}$ is induced from $p|_{Y}$. Since $\alpha_{Y}$ is basic with respect to $\rho$, a $1$-form $\alpha$ on $M$ is induced from $\alpha_{Y}$. We get $d\alpha = p_{M}^{*}\omega$ by \eqref{eq:alphaomega2}. By the nondegeneracy of $\omega$, we have that $\ker d\alpha \subset TM$ is a rank $1$ subbundle tangent to the fibers of the $S^{1}$-bundle $p_{M} \colon M \to X$. Because the restriction of $\alpha$ to each fiber of the $S^{1}$-bundle $p_{M} \colon M \to X$ is nowhere vanishing, $\alpha$ is a contact form on $M$. Since $\alpha$ is invariant under the principal $S^{1}$-action on the $S^{1}$-bundle $p_{M} \colon M \to X$, the Reeb flow of $\alpha$ is the principal $S^{1}$-action on $M$. Let $\sigma$ be the $S^{1}$-action on $S^{3} \times S^{5}$ defined by
\[
\lambda \cdot ( z_{0}, z_{1}, w_{0}, w_{1}, w_{2} ) = ( z_{0}, z_{1}, \lambda w_{0}, w_{1}, w_{2} )
\]
for $\lambda$ in $S^{1} \subset \CC$ and $( z_{0}, z_{1}, w_{0}, w_{1}, w_{2} )$ in $S^{3} \times S^{5} \subset \CC^{2} \times \CC^{3}$. Then $Y$ is invariant under $\sigma$. Since $\sigma$ commutes with $\rho$, an $S^{1}$-action $\sigma_{M}$ on $M$ is induced from $\sigma$. Here $\sigma_{M}$ preserves $\alpha$, because $\sigma$ preserves $\alpha_{Y}$. Since the product of $\sigma_{M}$ and the Reeb flow of $\alpha$ defines an $\alpha$-preserving $T^{2}$-action on $M$, we have a $K$-contact structure $(\alpha', g')$ of rank $2$ on $M$ by Lemma~\ref{lem:modification} (2).

We will see that the union $C$ of closed Reeb orbits of $\alpha'$ has two connected components of codimension $2$ in $M$ which are not diffeomorphic to each other. The fixed point set of the $S^{1}$-action on $X$ induced from $\sigma$ has two connected components
\begin{align}
B_{1} & = X \cap \{w_{1} =0\} \cap \{w_{2} =0\} = \CC P^{1} \times \{[1 : 0 : 0]\}, \\
B_{2} & = X \cap \{w_{0} = 0\}.
\end{align}
Both $B_{1}$ and $B_{2}$ are complex projective lines. By \eqref{eq:omegai}, we get
\[
\int_{B_{1}} \omega = m \int_{\CC P^{1}} \omega_{1} = m.
\]
Here $B_{2}$ is the submanifold of $\CC P^{1} \times \CC P^{1} = \CC P^{1} \times \{w_{0} = 0\}$ which is the image of the map $\phi =(\phi_1,\phi_2)\colon \CC P^{1} \to \CC P^{1} \times \CC P^{1}$ defined by $\phi ([1:z]) = \big([z:1], [1:z^{m}]\big)$. Then \eqref{eq:omegai} and \eqref{eq:defomega} imply
\[
\int_{B_{2}} \omega = \int_{\CC P^1} \phi^*\omega = m \int_{\CC P^1} \omega_1 + \int_{\CC P^1} \phi_2^* \omega_2=  m+m\int_{\CC P^1} \omega_1 =  2 m.
\]
By the construction, $C$ is a principal $S^{1}$-bundle over $B_{1} \sqcup B_{2}$ whose Euler numbers are $\int_{B_{1}} \omega = m$ and $\int_{B_{2}} \omega = 2m$, respectively. Since the $S^{1}$-bundle over $S^{2}$ with Euler number $s$ is diffeomorphic to the lens space $L(s,1)$ of type $(s,1)$, we get a diffeomorphism
\[
C \cong L(m,1) \sqcup L(2m,1).
\]
Thus two connected components of $C$ are of codimension $2$ and not diffeomorphic to each other.

\subsection{A GKM type theorem for Cohen-Macaulay torus actions}

\subsubsection{Assumptions on the torus action}\label{sec:assumptionontorusaction}
Whereas our main interest lies in the $K$-contact case, we will show a GKM type theorem for general Cohen-Macaulay torus actions satisfying a certain topological condition $(*)$ (see below) to clarify a general aspect of our argument. Consider a Cohen-Macaulay action of a torus $T$ on a connected compact manifold $M$. Recall the notation $M_i=\{p\in M\mid \dim Tp\leq i\}$, and that $b$ denotes the smallest integer such that $M_{b} \neq \emptyset$.
\begin{defn}[{\cite[Definition~1.2]{GH}}]\label{def:GKMcondition}
We say that the $T$-action satisfies the {\emph{nonisolated GKM condition}} if for each $p$ in $M_b$ the weights of the isotropy representation of $T_p$ on the normal space $\nu_pM_b$ are pairwise linearly independent.
\end{defn}
\noindent   As written in~\cite{GH}, the nonisolated GKM condition is satisfied if and only if each connected component of $M_{b}$ is of codimension $2$ in the closure of a connected component of $M_{b+1}- M_b$. Note that this closure is a connected component of an isotropy manifold: For any $p$ in $M_{b+1} -M_b$ the closure of the connected component of $M_{b+1}- M_b$ containing $p$ equals the connected component
$M^{\mft_p,p}$ of
\[
M^{\mft_p}=\{q\in M\mid X^\#_q=0 \text{ for all } X\in \mft_p\}
\] containing $p$. Here $\mft_p$ is the Lie algebra of the isotropy group $T_p$ at $p$.

From now on, we assume that the Cohen-Macaulay $T$-action satisfies the nonisolated GKM condition and
\begin{itemize}
\item[$(*)$] For each closure $N$ of a connected component of $M_{b+1}- M_{b}$, there exists a $T$-invariant Morse-Bott function $h_N$ on $N$ such that $\Crit(h_N) = N \cap M_{b}$.
\end{itemize}
\noindent These two conditions have strong topological consequences as in~\cite{GH} (see Remark~\ref{rem:top} and Lemma~\ref{lem:orb}).

\begin{rem}
If $M$ is a compact $K$-contact manifold, then the action of the closure $T$ of the Reeb flow on $M$ is Cohen-Macaulay by Theorem~\ref{thm:actionisCM}. We will see below in Proposition~\ref{prop:Kcontact*} that this $T$-action always satisfies $(*)$.
\end{rem}

\begin{rem}
It is easy to see that Hamiltonian torus actions on compact symplectic manifolds satisfy $(*)$. Hamiltonian torus actions are equivariantly formal by Proposition~5.8 of \cite{Kirwan} and therefore Cohen-Macaulay by Proposition~\ref{prop:formaleqandCM}.
\end{rem}

\begin{rem}
For $K$-contact manifolds, $M_{1}$ is the union of closed Reeb orbits. For $K$-contact manifolds of rank $2$, we have $M=M_{2}$. Thus the nonisolated GKM condition is satisfied by a $K$-contact manifold $M$ of rank $2$ if and only if each connected component of $C$ is of codimension $2$ in $M$. Hence the examples of $K$-contact manifolds in Section~\ref{ex:C} satisfy the nonisolated GKM condition.
\end{rem}

\subsubsection{Definition of the GKM graph}\label{sec:GKMgraph}
The notation of Section~\ref{sec:assumptionontorusaction} will be used. The nonisolated GKM condition and $(*)$ allow us to define the GKM graph as in~\cite{GH} because of the following lemma.
\begin{lemma}\label{lem:02}
Each closure $N$ of $M_{b+1}- M_b$ contains exactly two connected components of $M_{b}$ whose Morse index is $0$ and $2$, respectively.
\end{lemma}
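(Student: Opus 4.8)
The plan is to analyze the Morse--Bott function $h_N$ whose existence is guaranteed by condition $(*)$, using that $\Crit(h_N) = N \cap M_b$ and that $N$ is a compact connected manifold. First I would observe that $N$ is an isotropy submanifold, namely a connected component $M^{\mft_p,p}$ for some $p \in M_{b+1} - M_b$, so the isotropy subtorus $T'$ with Lie algebra $\mft_p$ (a codimension-one subtorus of $T$ when restricted appropriately, since the generic orbit dimension on $N$ is $b+1$) acts on $N$; modding out by it, $N/T'$ is a compact orbifold on which $T/T'$ acts, and the one-dimensional-orbit locus $N \cap M_b$ descends to the fixed-point set of a circle action (or of the induced torus action). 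The key point to extract is that $h_N$ descends to a Morse--Bott function on this orbifold whose critical set is exactly the fixed-point set of a circle action.

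The heart of the argument is then a parity and counting statement: on the closure $N$ (or its orbit-space orbifold), the normal bundle of each critical component is the normal bundle of a fixed-point component of a torus action, hence even-rank, so all Morse indices of $h_N$ are even; moreover, since $N$ is connected, $h_N$ attains a minimum (index $0$) on at least one component and a maximum on at least one component. Because the nonisolated GKM condition forces $N \cap M_b$ to have codimension $2$ in $N$, each critical component of $h_N$ has even codimension $2$, so its Morse index is either $0$ or $2$ and its coindex is correspondingly $2$ or $0$. Thus every connected component of $N \cap M_b$ has index $0$ or $2$. To see there are exactly two, I would use that $h_N$, being a perfect Morse--Bott function for the orbit space (by the GKM/Cohen--Macaulay theory, e.g.\ via Proposition~\ref{prop:perfectbasic} applied to the induced circle action, or directly via the localization/equivariant formality of the $T/T'$-action coming from Corollary~\ref{cor:S1} and Proposition~\ref{prop:commutingaction}), gives $\dim H^\bullet(N/T') = \#\{\text{components of }N\cap M_b\}$; and since $N/T'$ is a closed orientable $4$-orbifold (real dimension: $N$ has dimension $b+3$, quotient by the $(b+1)$-dimensional generic orbit leaves dimension $2$... wait, I should recompute: $N \cap M_b$ has codimension $2$ in $N$ and the orbits on it have dimension $b$, so $N/T'$ near those points looks like a cone, and globally $N/T'$ is a $2$-orbifold) --- a closed connected orientable surface orbifold with a circle action whose fixed set consists of isolated (orbifold) points has exactly two fixed points, by the standard classification of (pseudo-)gradient flows on such a surface, equivalently because its Euler characteristic equals the number of fixed points and a surface with a nontrivial circle action is a sphere.

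The main obstacle I anticipate is handling the orbifold subtleties carefully: one must check that $T' = (T_p)_0$ acts with at most finite stabilizers on all of $N$ (so that $N/T'$ is a genuine orbifold of the expected dimension $2$), that $h_N$ being $T$-invariant descends to the orbit space, and that the induced function is still Morse--Bott with the same indices --- the index contribution from the quotient is what one needs, and the even-rank-normal-bundle fact survives because the normal bundle splits off the (trivial, fiberwise) tangent to $T'$-orbits. Once $N/T'$ is identified as a compact connected orientable surface (orbifold) carrying a circle action with isolated fixed points equal to the image of $N \cap M_b$, the conclusion "exactly two fixed points, of index $0$ and $2$" is forced: the gradient flow of $h_N$ on this surface has a unique source and a unique sink since the surface is connected and $h_N$ restricted to the orbit space, being perfect with all Betti numbers $\le 1$ in the relevant range, cannot have more than one minimum or one maximum. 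I would present the codimension-$2$ and even-index observations first, then the connectedness/perfectness argument to upgrade "at most/at least one of each" to "exactly one of each."
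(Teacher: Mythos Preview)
Your opening observations are exactly right and match the paper: the nonisolated GKM condition forces each component of $N\cap M_b$ to have codimension $2$ in $N$, and $T$-invariance of $h_N$ makes the negative normal bundle a $T_p$-representation, hence of even rank, so every Morse index is $0$ or $2$. You also correctly note that at least one local minimum and one local maximum must occur.

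The gap is in your ``exactly two'' step. Your quotient construction does not do what you want: $T'$, being the identity component of the isotropy along the generic stratum of $N$, acts \emph{trivially} on $N$ (indeed $N$ is a component of $M^{\mft_p}$), so $N/T'=N$. Even if you instead take the full orbit space $N/T$, in the nonisolated setting this is not $2$-dimensional: the components of $M_b$ need not be single $T$-orbits, so $\dim N$ can exceed $b+3$. Concretely, in the paper's Hirzebruch-surface example (Section~\ref{ex:C}) one has $b=1$, $\dim N=5$, and $N/T$ is $3$-dimensional. So the ``surface with circle action has two fixed points'' mechanism, and the Euler-characteristic count you sketch, simply do not apply outside the isolated case.

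The paper closes the argument far more cheaply, and you already have all the ingredients. Since every critical component has index $0$ or $2$, each is a local minimum or a local maximum of $h_N$. Now use the standard connectedness lemma for Morse--Bott functions: because no critical component has index $1$, crossing a critical level never disconnects a sublevel set, so on the connected manifold $N$ the set of local minima is connected; dually (apply the same to $-h_N$, noting all coindices are $0$ or $2$) the set of local maxima is connected. Hence there is exactly one component of each type, giving $k=2$. No quotient, no perfectness, no orbifold bookkeeping is needed.
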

\begin{proof}
Let $B_{1}$, $B_{2}$, $\ldots$, $B_{k}$ be the connected components of $M_{b}$ contained in $N$. The nonisolated GKM condition implies that each $B_{i}$ is of codimension $2$ in $N$. By $(*)$, we have $\Crit(h_N) = \bigsqcup_{i=1}^{k} B_{i}$. Since $h_{N}$ is $T$-invariant, the index of $h_{N}$ at $B_{i}$ is either $0$ or $2$. Thus each $B_{i}$ is a locally maximal point set or a locally minimal point set of $h_{N}$. Since the index of all the critical manifolds of $h_{N}$ are even, both the local maximum point set and the local minimum point set of $h_{N}$ are connected. Thus we have $k=2$. The Morse index of the minimal point set is $0$, and the other is $2$.
 \end{proof}
\noindent Let us define the {\it GKM graph} $\Gamma =(\cV,\E)$.
\begin{defn}
We assign to each connected component of $M_b$ a vertex $v\in \cV$. We denote the connected component of $M_{b}$ corresponding to $v$ by $B_v$. We assign to each connected component of $M_{b+1}- M_b$ an edge $e \in \E$. We denote the closure of the connected component corresponding to $e$ by $N_e$. An edge $e\in \E$ connects $v_1$ and $v_2$ if $B_{v_1}$ and $B_{v_2}$ are the connected components of $M_b$ contained in $N_e$. We denote the two endpoints of each edge $e\in \E$ by $s(e)$ and $t(e)$.
\end{defn}

\begin{rem}\label{rem:top}
Because $M$ is connected, exactness of \eqref{eq:ABsequence} implies that $\Gamma$ is connected. In fact, the image of the injective map $H^{0}_{T}(M_{b+1}) \to H^{0}_{T}(M_{b})$ is the same as the image of $H^{0}_{T}(M) \to H^{0}_{T}(M_{b})$, which forces $H^{0}_{T}(M_{b+1})$ to be $1$-dimensional. Hence Lemma~\ref{lem:orb} below implies that for every two connected components $B_1$ and $B_2$ of $M_b$, the orbifolds $B_1/T$ and $B_2/T$ are diffeomorphic (although $B_1$ and $B_2$ are possibly not). In particular, $B_1$ and $B_2$ are of the same dimension.
\end{rem}

\subsubsection{Statement and proof of a GKM type theorem}
We will use the notation of Section~\ref{sec:assumptionontorusaction} and the GKM graph $\Gamma =(\cV,\E)$ defined in Section~\ref{sec:GKMgraph}. For $e$ in $\E$ let $h=h_{N_e}$ be a $T$-invariant Morse-Bott function as in Condition $(*)$, with minimum value $z_{\min}$ and maximum value $z_{\max}$. We assume that $B_{s(e)} =h^{-1}(z_{\min})$ and $B_{t(e)}=h^{-1}(z_{\max})$.
\begin{lemma}\label{lem:orb}
\begin{enumerate}
\item $B_{t(e)}$ (resp. $B_{s(e)}$) is a $T$-equivariant retract of $N_{e} - B_{s(e)}$ (resp. $N_{e} - B_{t(e)}$).
\item For any $z$ in $(z_{\min},z_{\max})$ there exist $T$-equivariant maps
\begin{equation*}
p_{t(e)}\colon h^{-1}(z)\to B_{t(e)}, \,\,\, p_{s(e)}\colon h^{-1}(z)\to B_{s(e)}
\end{equation*}
which define $T$-equivariant $S^{1}$-bundles and descend to orbifold diffeomorphisms of the respective orbit spaces.
\end{enumerate}
\end{lemma}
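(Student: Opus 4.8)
The plan is to use the $T$-invariant Morse-Bott function $h = h_{N_e}$ on $N_e$ directly, exploiting the fact that its only critical submanifolds are $B_{s(e)} = h^{-1}(z_{\min})$ and $B_{t(e)} = h^{-1}(z_{\max})$, both of codimension $2$ in $N_e$ with indices $0$ and $2$ by Lemma~\ref{lem:02}. For part (1), I would equip $N_e$ with a $T$-invariant Riemannian metric — which exists since $T$ is compact — and consider the negative gradient flow of $h$. Because $N_e$ is compact and the only critical set below level $z_{\max}$ is $B_{s(e)}$, standard Morse-Bott theory (deformation retraction along gradient trajectories) shows that $N_e - B_{s(e)}$ deformation retracts onto $B_{t(e)}$; since the metric and $h$ are both $T$-invariant, the gradient flow commutes with $T$, so this retraction is $T$-equivariant. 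The symmetric statement follows by applying the same argument to $-h$.

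For part (2), fix $z \in (z_{\min}, z_{\max})$. The level set $h^{-1}(z)$ is a smooth $T$-invariant hypersurface of $N_e$ (no critical points at level $z$). I would define $p_{t(e)} \colon h^{-1}(z) \to B_{t(e)}$ by following the positive gradient flow of $h$ from a point of $h^{-1}(z)$ until it limits onto $B_{t(e)}$; this is the standard Morse-Bott "flow to the stable/unstable manifold of the maximum" map. Because the index at $B_{t(e)}$ is $2$ and $B_{t(e)}$ has codimension $2$ in $N_e$, the unstable manifold of $B_{t(e)}$ is all of $N_e$ near $B_{t(e)}$, and the fibers of $p_{t(e)}$ are the $1$-spheres obtained by intersecting $h^{-1}(z)$ with the $2$-dimensional unstable disk over each point of $B_{t(e)}$; this exhibits $p_{t(e)}$ as a smooth $S^1$-bundle, and $T$-equivariance is again automatic since the gradient flow is $T$-equivariant. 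Defining $p_{s(e)}$ analogously via the negative gradient flow gives the second bundle. To see that $p_{t(e)}$ descends to an orbifold diffeomorphism $h^{-1}(z)/T \to B_{t(e)}/T$: the map is $T$-equivariant and surjective, and since the lowest orbit dimension in $N_e \subset M$ is $b$, attained precisely along $M_b \cap N_e = B_{s(e)} \sqcup B_{t(e)}$, the $S^1$ fibers of $p_{t(e)}$ lie in $M_{b+1} - M_b$ and are each a single $(b+1)$-dimensional $T$-orbit (the orbit direction is exactly the $S^1$ fiber direction, transverse to $B_{t(e)}$). Hence $p_{t(e)}$ identifies the $T$-orbits in $h^{-1}(z)$ with those in $B_{t(e)}$, and the induced map on orbit spaces is a diffeomorphism of orbifolds.

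**The main obstacle** I anticipate is the careful verification that the $S^1$ fiber of $p_{t(e)}$ coincides with a single $T$-orbit, rather than just being $T$-invariant. This requires knowing that for $p \in h^{-1}(z)$, the isotropy algebra $\mft_p$ equals $\mft_{t(e)}$ (the isotropy of the nearby component $B_{t(e)}$), which forces $\dim Tp = b+1$, so the $T$-orbit through $p$ is $1$-dimensional transverse to $B_{t(e)}$ and therefore equal to the $S^1$ fiber for dimension reasons. This in turn uses the structure of $\overline{M_{b+1} - M_b}$ as a connected component $M^{\mft_p, p}$ of an isotropy manifold, noted in Section~\ref{sec:assumptionontorusaction}: along $N_e$ the isotropy algebra is constantly $\mft_p$ off the critical set, with only possible jumps (to larger algebras) along $B_{s(e)}$ and $B_{t(e)}$. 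Assembling these facts precisely — and checking the orbifold structures match, not merely the underlying topological spaces — is the technical heart; the retraction statements in part (1) and the bundle structures themselves are routine Morse-Bott theory once $T$-invariance of the metric is arranged.
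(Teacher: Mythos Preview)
Your plan for parts (1) and the $S^1$-bundle structure in (2) is essentially the paper's argument: fix a $T$-invariant metric, use Lemma~\ref{lem:02} to view $N_e$ as a $T$-equivariant $D^2$-bundle over $B_{s(e)}$ (and, replacing $h$ by $-h$, over $B_{t(e)}$), and read off both the equivariant retractions and the circle-bundle projections $p_{s(e)}, p_{t(e)}$ from the disk-bundle projection together with the gradient flow between level sets. The paper does not spell out the orbifold-diffeomorphism claim at all, so your attempt to justify it goes beyond what the paper records.

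That attempt, however, contains a genuine error. An $S^1$ fiber of $p_{t(e)}$ is one-dimensional, so it cannot be ``a single $(b+1)$-dimensional $T$-orbit'' unless $b=0$; likewise the $T$-orbit through $p\in h^{-1}(z)$ is $(b+1)$-dimensional, not ``$1$-dimensional transverse to $B_{t(e)}$'', and $\mft_p$ equals the generic isotropy algebra $\mft_e$ of $N_e$, which is strictly smaller than the isotropy algebra along $B_{t(e)}$. The correct argument is one level up: by $T$-equivariance, the $p_{t(e)}$-preimage of a $b$-dimensional $T$-orbit $O\subset B_{t(e)}$ is a connected $T$-invariant submanifold of $h^{-1}(z)\subset M_{b+1}-M_b$ of dimension $b+1$; since every $T$-orbit there already has dimension $b+1$, this preimage is a single $T$-orbit. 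Hence $p_{t(e)}$ induces a bijection of $T$-orbits, and the local disk-bundle model shows the induced map on quotients is a smooth orbifold diffeomorphism. With this correction your outline matches the paper's proof.
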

\begin{proof}
Fix a $T$-invariant metric on $N_{e}$. By Lemma~\ref{lem:02}, $N_e$ is decomposed into a disjoint union of $B_{t(e)}$ and a $T$-equivariant $D^{2}$-bundle $D^{2}(B_{s(e)}) \to B_{s(e)}$ over $B_{s(e)}$. We can $T$-equivariantly retract $N_{e} - B_{t(e)} = D^{2}(B_{s(e)})$ to $B_{s(e)}$. Considering $-h$ instead of $h$, we see that $N_{e} - B_{s(e)}$ can be $T$-equivariantly retracted to $B_{t(e)}$. This proves (1). For any $z_{1}$, $z_{2} \in (z_{\min},z_{\max})$, the gradient flow of $h$ defines an equivariant diffeomorphism from  $h^{-1}(z_{1})$ to $h^{-1}(z_{2})$. Thus level sets of $h$ are concentric $T$-equivariant $S^{1}$-subbundles of $D^{2}(B_{s(e)})$. Therefore we get $p_{s(e)}\colon h^{-1}(z)\to B_{s(e)}$ for each $z$ in $(z_{\min},z_{\max})$ by the restriction of the projection $D^{2}(B_{s(e)}) \to B_{s(e)}$. Considering $-h$ instead of $h$, we get $p_{t(e)}$. This proves (2).
 \end{proof}

By Theorem~7.1 of \cite{GT2009}, Condition $(*)$ implies that
\begin{lemma}\label{lem:NCohenMacaulay} For each $e$ in $\E$, the $T$-action on $N_e$ is Cohen-Macaulay.
\end{lemma}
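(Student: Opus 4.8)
The plan is to invoke Theorem~7.1 of~\cite{GT2009}, which (as already used in the proof of Theorem~\ref{thm:actionisCM}) states that if a torus $T$ acts on a compact manifold and there is a $T$-invariant Morse--Bott function whose critical set coincides with $M_b$ (the lowest-dimensional orbit stratum), then the $T$-action is Cohen--Macaulay. So the task reduces to verifying the hypotheses of that theorem for the $T$-action on $N_e$.

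First I would observe that $N_e$ is a closed (compact) $T$-invariant submanifold of $M$: by definition it is the closure of a connected component of $M_{b+1}-M_b$, and as noted in Section~\ref{sec:assumptionontorusaction} this closure equals the connected component $M^{\mft_p,p}$ of the isotropy manifold $M^{\mft_p}$, which is a compact $T$-invariant submanifold. Next I would identify the lowest orbit stratum of the restricted action: for the $T$-action on $N_e$, the minimal orbit dimension is again $b$, and the corresponding stratum is $N_e\cap M_b$, which by Lemma~\ref{lem:02} consists of exactly the two connected components $B_{s(e)}$ and $B_{t(e)}$. Finally, Condition $(*)$ provides precisely a $T$-invariant Morse--Bott function $h_N=h_{N_e}$ on $N$ with $\Crit(h_N)=N\cap M_b$; this is exactly the input Theorem~7.1 of~\cite{GT2009} requires. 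Applying that theorem yields that the $T$-action on $N_e$ is Cohen--Macaulay.

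There is essentially no obstacle here beyond bookkeeping: the substantive content is entirely contained in Theorem~7.1 of~\cite{GT2009} and in Condition~$(*)$, which has been assumed. The only point requiring a small amount of care is checking that the orbit-dimension stratification of $N_e$ induced from its own $T$-action agrees (at the bottom) with the restriction of the stratification of $M$ — i.e.\ that $N_e\cap M_b$ is genuinely the minimal stratum for the action on $N_e$ and that its orbits have dimension exactly $b$ rather than less. This follows because $N_e\subset M$ and orbit dimensions only depend on the isotropy groups, so the stratum of $N_e$ of orbit dimension $\le i$ is $N_e\cap M_i$; since $N_e$ meets $M_b$ but not $M_{b-1}$, the bottom stratum is $N_e\cap M_b$. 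Once this is noted, the proof is a one-line citation.

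\begin{proof}
By the discussion in Section~\ref{sec:assumptionontorusaction}, $N_e$ equals a connected component $M^{\mft_p,p}$ of an isotropy manifold, hence is a compact $T$-invariant submanifold of $M$. Since orbit dimensions depend only on isotropy groups, the orbit-dimension stratum $\{q\in N_e\mid \dim Tq\le i\}$ of the $T$-action on $N_e$ is $N_e\cap M_i$; as $N_e$ meets $M_b$ but is contained in $M_{b+1}$, the lowest stratum is $N_e\cap M_b$, which by Lemma~\ref{lem:02} is $B_{s(e)}\sqcup B_{t(e)}$. By Condition $(*)$ there is a $T$-invariant Morse--Bott function $h_{N_e}$ on $N_e$ with $\Crit(h_{N_e})=N_e\cap M_b$. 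Hence Theorem~7.1 of~\cite{GT2009} applies to the $T$-action on $N_e$ and shows that it is Cohen--Macaulay.
 \end{proof}
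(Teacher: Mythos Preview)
Your proof is correct and follows exactly the paper's approach: the paper's proof is the single sentence ``By Theorem~7.1 of~\cite{GT2009}, Condition $(*)$ implies that\ldots'', and your argument is precisely this citation with the hypotheses spelled out in detail. The extra bookkeeping you include (compactness and $T$-invariance of $N_e$, identification of the bottom stratum as $N_e\cap M_b$) is accurate and makes explicit what the paper leaves implicit.
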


Define
\[
L_e :=h^{-1}\Big(\frac{z_{\min}+z_{\max}}{2}\Big).
\]
By Lemma~\ref{lem:orb} we have $T$-equivariant $S^{1}$-bundles
\[
p_{s(e)} \colon L_{e} \longrightarrow B_{s(e)}, \,\,\, p_{t(e)} \colon L_{e} \longrightarrow B_{t(e)}.
\]
\noindent The proof of the following theorem is done by computing $\ker \delta$ in Proposition~\ref{prop:ChangSkjelbred} using the Mayer-Vietoris sequence as in the original proof of the GKM theorem (Theorem~7.2 of~\cite{GKM}).
\begin{thm}\label{thm:nonisolatedGKM} If a Cohen-Macaulay action of a torus $T$ on a connected compact manifold $M$ satisfies the nonisolated GKM condition and $(*)$, then
\[
H^{\bullet}_T(M) \cong \Big\{ (f_{v}) \in \bigoplus_{v \in \cV}  H^{\bullet}_T(B_{v}) \,\, \Big| \,\, p_{s(e)}^{*} f_{s(e)} = p_{t(e)}^{*} f_{t(e)}, \forall e \in \E \Big\}
\]
as graded $S(\mft^{*})$-algebras where $\Gamma = (\cV,\E)$ is the GKM graph of $M$.
\end{thm}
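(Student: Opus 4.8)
The plan is to compute $\ker\delta$ in Proposition~\ref{prop:ChangSkjelbred} via a Mayer--Vietoris argument, exactly following the original GKM strategy but replacing fixed points by the components $B_v$ of $M_b$ and the isotropy-2-spheres by the $S^1$-bundles $N_e$ of Lemma~\ref{lem:orb}. First I would fix, for each edge $e$, a small open tube $U_e$ around $N_e$ inside $M_{b+1}$, and set up the description of the pair $(M_{b+1},M_b)$ in terms of these tubes: by excision and the retraction statements in Lemma~\ref{lem:orb}(1), $H^\bullet_T(M_{b+1},M_b)$ decomposes as a direct sum over $e\in\E$ of relative groups $H^\bullet_T(N_e,\, N_e\cap M_b)=H^\bullet_T(N_e,\,B_{s(e)}\sqcup B_{t(e)})$. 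Using Lemma~\ref{lem:02}, $N_e$ is a $T$-equivariant $D^2$-bundle over $B_{s(e)}$ with $B_{t(e)}$ as its zero-section-complement retract; a Thom-isomorphism / long-exact-sequence computation then identifies this relative group and lets me make the boundary map $\delta$ concrete.

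Next I would describe $H^\bullet_T(M_b)=\bigoplus_{v\in\cV}H^\bullet_T(B_v)$ and trace through what $\delta$ does on an element $(f_v)$. The key local computation is on a single $N_e$: I want to show that the component of $\delta(f_v)$ in the summand indexed by $e$ vanishes \emph{if and only if} $p_{s(e)}^*f_{s(e)}=p_{t(e)}^*f_{t(e)}$ in $H^\bullet_T(L_e)$. For this I would invoke Lemma~\ref{lem:NCohenMacaulay}: since the $T$-action on $N_e$ is Cohen--Macaulay and $N_e$ has $(N_e)_b=B_{s(e)}\sqcup B_{t(e)}$, the Atiyah--Bredon exactness of Theorem~6.2 of~\cite{GT2009} applied to $N_e$ gives $H^\bullet_T(N_e)\cong\ker\big(H^\bullet_T(B_{s(e)})\oplus H^\bullet_T(B_{t(e)})\to H^\bullet_T((N_e)_{b+1},(N_e)_b)\big)$. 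Combining this with the Mayer--Vietoris sequence for $N_e=(\text{tube around }B_{s(e)})\cup(\text{tube around }B_{t(e)})$, whose overlap $T$-equivariantly retracts to $L_e$, identifies $H^\bullet_T(N_e)$ with the fibre-product $\{(a,b): p_{s(e)}^*a=p_{t(e)}^*b\}$; then exactness of the long exact sequence of the pair $(N_e,(N_e)_b)$ shows precisely that $(a,b)$ lies in the image of $H^\bullet_T(N_e)$ iff it maps to zero under the relevant boundary map. Assembling these local statements over all $e$ yields $\ker\delta=\{(f_v): p_{s(e)}^*f_{s(e)}=p_{t(e)}^*f_{t(e)}\ \forall e\}$, and Proposition~\ref{prop:ChangSkjelbred} finishes the proof, the ring structure being automatic since all maps are $S(\mft^*)$-algebra homomorphisms.

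The main obstacle I anticipate is the bookkeeping needed to make the identification of $H^\bullet_T(M_{b+1},M_b)$ as a direct sum over edges genuinely precise: one must choose the tubular neighbourhoods compatibly, check that different $N_e$ meet $M_b$ in disjoint pieces (or handle components of $M_b$ that lie on several edges), and verify that the excision/retraction isomorphisms are compatible with the boundary maps so that $\delta$ really is the ``sum of local $\delta_e$'' one wants. A second, more technical point is ensuring the Mayer--Vietoris and pair sequences for each $N_e$ are compatible with the Thom isomorphism of the $D^2$-bundle structure from Lemma~\ref{lem:02}, so that the condition extracted is literally $p_{s(e)}^*f_{s(e)}=p_{t(e)}^*f_{t(e)}$ on $L_e$ rather than some twisted version of it; here one uses that $L_e$ is a deformation retract of $N_e\setminus(B_{s(e)}\sqcup B_{t(e)})$, which again comes from the gradient flow of $h_{N_e}$ as in the proof of Lemma~\ref{lem:orb}. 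Everything else — the Chang--Skjelbred step, the Cohen--Macaulayness inputs, the connectedness of $\Gamma$ — is already in place from the earlier sections.
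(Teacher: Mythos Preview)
Your proposal is correct and follows essentially the same route as the paper: reduce to $\ker\delta$ via the Chang--Skjelbred step, split $H^\bullet_T(M_{b+1},M_b)$ as a direct sum over edges, and on each $N_e$ compare the long exact sequence of the pair $(N_e,B_{s(e)}\cup B_{t(e)})$ with the Mayer--Vietoris sequence for the cover $U_1=N_e\setminus B_{t(e)}$, $U_2=N_e\setminus B_{s(e)}$ (whose overlap retracts to $L_e$), using Lemma~\ref{lem:NCohenMacaulay} and Lemma~\ref{lem:orb} exactly as you indicate. The paper packages your second ``obstacle'' into a single commutative ladder between these two sequences, so that the identification $\ker\delta=\ker(p_{s(e)}^*-p_{t(e)}^*)$ falls out without ever invoking the Thom isomorphism explicitly; your tubular-neighbourhood bookkeeping is subsumed by taking $U_1,U_2$ to be these open complements.
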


\begin{proof} By \eqref{eq:ABsequence}, we have
\[
H^\bullet_T(M)\cong \ker \big(H^\bullet_T(M_b)\to H^\bullet_T(M_{b+1},M_b)\big).
\]
Because
\[
H^\bullet_T(M_{b+1},M_b)\cong \bigoplus_{e\in \E} H^\bullet_T(N_e,B_{s(e)}\cup B_{t(e)}),
\]
a tuple $(f_{v})\in H^\bullet_T(M_b)\cong \bigoplus_{v \in \cV} H^\bullet_T(B_{v})$ is in the kernel of $\delta$ if and only if for each edge $e\in \E$ the pair $(f_{s(e)},f_{t(e)})$ is in the kernel of
\[
\delta \colon H^{\bullet}_{T}(B_{s(e)}) \oplus H^\bullet_{T}(B_{t(e)})\to H^{\bullet+1}_T(N_e,B_{s(e)}\cup B_{t(e)}).
\]

Let $e\in \E$ and consider the following diagram, in which the top row is the exact sequence of the pair $(N_e,B_{s(e)}\cup B_{t(e)})$ and the bottom row is the Mayer-Vietoris sequence of the covering $U_1=N_e - B_{t(e)}$, $U_2=N_e - B_{s(e)}$ of $N_e$:

\vspace{-9pt}

\begin{equation}\label{diag:1}\tiny{
\xymatrix{ 0\ar[r] & H^{\bullet}_{T}(N_e) \ar[r]& H^{\bullet}_{T}(B_{s(e)}) \oplus H^\bullet_{T}(B_{t(e)})\ar[r]^\delta  & H^{\bullet+1}_T(N_e,B_{s(e)}\cup B_{t(e)}) \ar[r] & 0 \\
 0 \ar[r] & H^{\bullet}_{T}(N_e) \ar[r]^<<<<<<{\theta} \ar[u]^\cong  &  H^{\bullet}_{T}(U_1) \oplus H^\bullet_T(U_2) \ar[r]^{\beta} \ar[u]^\cong_r  & H^\bullet_T(L_e) \ar[r]   & 0. } }
\end{equation}
Because the $T$-action on $N_e$ is Cohen-Macaulay by Lemma~\ref{lem:NCohenMacaulay}, the first row is exact by \eqref{eq:ABsequence}. Since $r$ is an isomorphism by Lemma~\ref{lem:orb} (1), it follows that $\theta$ is injective. Then the exactness of the Mayer-Vietoris sequence implies that $\beta$ is surjective. There exists a unique vertical isomorphism on the right that makes the diagram commute. Therefore the kernel of $\delta$ equals the kernel of $\beta\circ r^{-1}= p_{s(e)}^*~-~p_{t(e)}^*$.
 \end{proof}
\noindent See Example~\ref{ex:join} for an application of Theorem~\ref{thm:nonisolatedGKM}.

\begin{rem}
By a direct calculation, we can see that the inverse of the right vertical arrow in the diagram \eqref{diag:1} is given by the composition of the excision isomorphism and the Thom isomorphism on the normal bundle of $L_{e}$ in $N_{e}$.
\end{rem}

\begin{rem}
Note that the $T$-equivariant cohomology of $B_{v}$ is given as follows: Let $\mft_{v}$ and $\mft_{e}$ be the isotropy algebras of $B_{v}$ and $N_{e}$, respectively. Let $T'$ be an $S^{1}$-subgroup of $T$ whose Lie algebra is a complement of the isotropy algebra $\mft_{v}$ in $\mft_{e}$. Then
\[
H^{\bullet}_T(B_{v})\cong S(\mft_{v}^*)\otimes H^{\bullet}_{T'}(B_{v})\cong S(\mft_{v}^*)\otimes H^{\bullet}(B_{v}/T),
\]
\noindent as graded $S(\mft^*)$-algebras where the $S(\mft^*)$-module structure on the right hand side depends on the $T$-action on $B_{v}$. For two connected components $B_1$ and $B_2$ of $M_b$, the equivariant cohomologies $H^*_T(B_1)$ and $H^*_T(B_2)$ are hence isomorphic as rings, but not necessarily as $S(\mft^*)$-modules.
\end{rem}

\begin{rem}\label{rem:Phi}
By Lemma~\ref{lem:orb}, $p_{s(e)}$ and $p_{t(e)}$ induce orbifold diffeomorphisms $p_{s(e)}:L_e/T \to B_{s(e)}/T$ and $p_{t(e)}:L_e/T\to B_{t(e)}/T$. We denote \[
\Phi_e:= p_{t(e)}\circ p_{s(e)}^{-1}:B_{s(e)}/T\to B_{t(e)}/T.
\]
\noindent With regard to the isomorphisms $H^\bullet_T(B_{v})\cong S(\mft_{v}^*)\otimes H^\bullet(B_{v}/T)$, we may consider $f_v\in H^\bullet_T(B_{v})$ as a polynomial map $f_{v}:\mft_{v}\to H^\bullet(B_{v}/T)$. In this notation, $p_{s(e)}^* f_{s(e)}\in H^\bullet_T(L_e)\cong S(\mft_e^*)\otimes H^\bullet(L_e/T)$ is the polynomial defined by $(p_{s(e)}^* f_{s(e)})(X)=p_{s(e)}^*(f_{s(e)}(X))$ for all $X\in \mft_e$, and analogously for $p_{t(e)}^*f_{t(e)}$. Thus, the isomorphism in Theorem~\ref{thm:nonisolatedGKM} can be written as
\begin{align*}
H^{\bullet}_T(M) \cong \Big\{ (f_{v}) \in \bigoplus_{v \in \cV}  S(\mft_{v}^*)\otimes H^{\bullet}(B_{v}/T) \,\, \Big| \,\,  \begin{array}{c} f_{s(e)}(X) = \Phi_e^{*} ( f_{t(e)}(X) ) \\ {\text{ for all }} e \in \E \text{ and } X\in \mft_e\end{array} \Big\}.
\end{align*}
\noindent Comparing to the symplectic setting considered in~\cite{GH}, the maps $\Phi_e$ correspond to the ring homomorphisms $\kappa_e$ defined in Equation~(4.2) of~\cite{GH}, and the compatibility condition on the $f_{v}$ corresponds to Equation~(4.5) of~\cite{GH}. Thus our line of argumentation gives another proof of the Hamiltonian GKM theorem with nonisolated fixed points (Theorem~1.4 of~\cite{GH}) in the spirit of~\cite{GKM}.
\end{rem}

\subsection{The $K$-contact case}

We are still to show that Theorem~\ref{thm:nonisolatedGKM} is applicable in the $K$-contact setting. Let $(M,\alpha,g)$ be a connected compact $K$-contact manifold, and denote by $T$ the closure of the Reeb flow in $\Isom (M,g)$.

\begin{lemma}\label{lem:K-contactsubmfd} Let $H\subset T$ be a connected subgroup. Then each connected component of $M^H$ is a $T$-invariant contact submanifold of $M$.
\end{lemma}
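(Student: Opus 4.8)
The plan is to show that each connected component $N$ of $M^H$ carries a contact form obtained by restricting $\alpha$, using the fact that $H$ acts by isometries preserving $\alpha$. First I would recall that since $H\subset T\subset \Isom(M,g)$ and the $T$-action preserves $\alpha$, the fixed point set $M^H$ is a totally geodesic submanifold of $(M,g)$ (standard: the fixed point set of a group of isometries is totally geodesic), hence a smooth submanifold; let $N$ be a connected component and $\iota\colon N\hookrightarrow M$ the inclusion. The key point is that $N$ is \emph{transverse to} $\ker\alpha$ in the sense that $R$ is tangent to $N$: indeed $R$ generates the Reeb flow, which is a dense subaction of $T$, and $T$ commutes with $H$, so the Reeb flow preserves $M^H$ and thus restricts to a flow on $N$; therefore $R_p\in T_pN$ for all $p\in N$, and in particular $\alpha$ does not vanish on $TN$.

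Next I would verify that $\iota^*\alpha$ is a contact form on $N$, i.e. that $\iota^*\alpha\wedge(\iota^*d\alpha)^{k}$ is nowhere zero, where $\dim N=2k+1$ (the dimension of $N$ is odd because $R$ is tangent and the complement $T_pN\cap\ker\alpha_p$ is a complex subspace of $(\ker\alpha_p,J)$, hence even-dimensional — this uses that $H$ acts on $\ker\alpha_p$ by $J$-complex isometries, since $g$ is adapted to $\alpha$ and $H$ preserves both $g$ and $\alpha$, hence preserves $J$ on $\ker\alpha$). For the nondegeneracy, fix $p\in N$ and decompose $T_pM = \RR R_p \oplus \ker\alpha_p$. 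Since $H$ preserves this decomposition and acts $\CC$-linearly on $(\ker\alpha_p, J)$ with $d\alpha_p(\cdot,J\cdot)=g$, the fixed subspace $W:=T_pN\cap\ker\alpha_p$ is a $J$-invariant subspace on which $d\alpha_p$ restricts to $g(\cdot,J\cdot)|_W$, which is nondegenerate (it is the symplectic form of a Hermitian inner product restricted to a complex subspace). Hence $(\iota^*d\alpha)_p = d\alpha_p|_{W}$ is nondegenerate on $T_pN/\RR R_p\cong W$, and $(\iota^*\alpha)_p(R_p)=1$, so $\iota^*\alpha\wedge(\iota^*d\alpha)^k$ is nonzero at $p$. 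Finally, $N$ is $T$-invariant because $T$ is connected and commutes with $H$, so it preserves $M^H$ and hence each of its connected components.

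The main obstacle is the clean identification of the tangent space $T_pN$ and the verification that $d\alpha$ restricts nondegenerately to the ``horizontal'' part $W=T_pN\cap\ker\alpha_p$; this is where one must use that the adapted metric forces $H$ to act as $J$-complex isometries on $\ker\alpha_p$, so that the fixed space is a $J$-complex (hence $d\alpha$-symplectic) subspace. Once that linear-algebra fact is in place, everything else — smoothness of $M^H$, tangency of $R$, $T$-invariance of components, and the contact condition — follows routinely.
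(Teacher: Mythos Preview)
Your argument is correct and reaches the same conclusion as the paper, but via a different linear-algebraic route. Both proofs first observe that $R$ is tangent to $M^H$ (so $\ker(\alpha|_{M^H})=TM^H\cap\ker\alpha$) and then reduce to showing that $d\alpha$ is nondegenerate on $W:=T_pN\cap\ker\alpha_p$. The paper does this by taking the weight decomposition $T_pM\cong T_p(M^H)\oplus\bigoplus_\mu V_\mu$, computing via $L_{X^\#}d\alpha=0$ that $T_p(M^H)$ is $d\alpha$-orthogonal to each $V_\mu$, and then arguing that for $v\in W$ any $w$ with $d\alpha(v,w)\neq 0$ must have a nonzero $W$-component. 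You instead observe that $H$ preserves $J$ (since it preserves both $g$ and $d\alpha$, and $J$ is determined by $g(\cdot,\cdot)=d\alpha(\cdot,J\cdot)$), so $W$ is $J$-invariant, and then nondegeneracy follows immediately from $d\alpha(v,Jv)=g(v,v)>0$ for $v\neq 0$.

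Your route is arguably more direct: it avoids the weight decomposition and the orthogonality computation, and it simultaneously gives the odd-dimensionality of $N$. The paper's route, on the other hand, makes explicit the $d\alpha$-orthogonal splitting $\ker\alpha_p = W\oplus\bigoplus_\mu V_\mu$, which is itself a useful structural fact (e.g.\ for analyzing normal bundles of fixed submanifolds), and its computation via $L_{X^\#}d\alpha=0$ uses only the $T$-invariance of $\alpha$ rather than the full adapted-metric package. In practice both arguments rely on the same underlying reason---$H$ acts $\CC$-linearly on $(\ker\alpha_p,J)$---so the difference is one of emphasis.
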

\begin{proof} $M^H$ is clearly $T$-invariant, hence the Reeb vector field is everywhere tangent to $M^H$. Thus, $\ker (\alpha |_{M^H})$ is a hyperplane field on $M^H$ which coincides with $T M^H \cap \ker \alpha$. We need to show that for each $p$ in $M^H$, $(d\alpha)_p$ is nondegenerate on $\ker (\alpha |_{M^H})$. Consider the decomposition of $T_pM$ in $H$-irreducible submodules:
\[
T_pM\cong (T_pM)^H \oplus \bigoplus_\mu V_\mu\cong T_p(M^H)\oplus \bigoplus_\mu V_\mu,
\]
where $\mu$ runs over the weights of the $H$-representation: $V_\mu$ is a complex one-dimensional vector space on which the action of the Lie algebra $\mfh=\Lie(H)$ is given by $[X^\#,w]=\mu(X)Jw$.
Let $v\in T_p(M^H)$ and $w\in V_\mu$ for some $\mu$. Then for each $X$ in $\mfh$ we have
\begin{align*}
0&=(L_{X^\#}d\alpha)(v,w)\\
&=-d\alpha([X^{\#},v],w)-d\alpha(v,[X^{\#},w])=-\mu(X)d\alpha(v,Jw).
\end{align*}
It follows that $T_p(M^H)$ is orthogonal to $\bigoplus_\mu V_\mu$ with respect to $(d\alpha)_p$.

If $v$ is some element in $\big(\ker (\alpha |_{M^H})\big)_p$, then the nondegeneracy of $(d\alpha)_p$ implies that there exists $w\in (\ker \alpha)_p$ such that $(d\alpha)(v,w)\neq 0$. We may write $w=\lambda R_p+w_1+w_2$, where $\lambda\in \RR$, $w_1\in \big(\ker (\alpha |_{M^H}) \big)_p$ and $w_2\in \bigoplus_\mu V_\mu$. Then $(d\alpha)(v,w_1)\neq 0$ and it follows that  $(d\alpha)_p$ is nondegenerate on $\ker (\alpha |_{M^H})$.
 \end{proof}

If $N \subset M$ is a $T$-invariant contact submanifold, then $(N,\alpha|_{N})$ admits a compatible $K$-contact structure by Proposition~\ref{prop:Yamazaki}. Note that the effectivization of the $T$-action on $N$ is equal to the action of the torus obtained as the closure of the Reeb flow of $\alpha|_{N}$ in $\Isom (N,g|_{N})$. Modulo this effectivization, the restriction of the contact moment map of $(M,\alpha)$ to $N$ is the contact moment map of $(N,\alpha|_{N})$. Thus Proposition~\ref{prop:momentmorsebott} and Lemma~\ref{lem:K-contactsubmfd} imply:
\begin{prop}\label{prop:Kcontact*} The $T$-action on $M$ satisfies Condition $(*)$.
\end{prop}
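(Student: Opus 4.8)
The plan is to recognise each relevant $N$ as carrying, in its own right, a $K$-contact structure, and then to take for $h_N$ a generic component of \emph{its} contact moment map. First I would record that for a compact $K$-contact manifold the lowest orbit dimension is $b=1$: the Reeb vector field $R$ is nowhere zero, so $M_0=\emptyset$, while by Corollary~\ref{cor:C} the union $C$ of closed Reeb orbits is nonempty and (as noted in Section~\ref{sec:main}) coincides with the set of one-dimensional $T$-orbits, so $M_1=C\neq\emptyset$. Hence $M_b=C$, and it suffices to produce, for each closure $N$ of a connected component of $M_2-M_1$, a $T$-invariant Morse-Bott function $h_N$ on $N$ with $\Crit(h_N)=N\cap C$.

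Next I would identify $N$ as a contact submanifold. By the observation in Section~\ref{sec:assumptionontorusaction}, $N$ equals the connected component $M^{\mft_p,p}$ of $M^{\mft_p}$ through any point $p$ of the corresponding component of $M_2-M_1$; since $M^{\mft_p}=M^{H}$ for the connected subtorus $H\subseteq T$ generated by $\exp\mft_p$, Lemma~\ref{lem:K-contactsubmfd} applies and shows that $N$ is a $T$-invariant contact submanifold of $M$. As $R\in\mft$ is tangent to $N$, the Reeb flow of $\alpha$ restricts to $N$, one checks immediately that $R|_N$ is the Reeb vector field of $\alpha|_N$, and (by Proposition~\ref{prop:Yamazaki}, since the Reeb flow of $\alpha|_N$ preserves $g|_N$) the pair $(N,\alpha|_N)$ carries a compatible $K$-contact structure.

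Now let $T_N$ denote the closure of the Reeb flow of $\alpha|_N$ in $\Isom(N,g|_N)$. Restricting isometries of $(M,g)$ to $N$ gives a homomorphism $T\to\Isom(N,g|_N)$ whose image is exactly $T_N$, being the closure of the image of the Reeb flow; thus the $T$-action on $N$ factors through $T_N$, and under the induced surjection $\mft\to\mft_N$ the restriction of $\Phi$ to $N$ becomes the contact moment map $\Phi_N\colon N\to\mft_N^*$ of $(N,\alpha|_N)$. Applying Proposition~\ref{prop:momentmorsebott} to $(N,\alpha|_N,g|_N)$, for generic $Y\in\mft_N$ the component $\Phi_N^Y$ is a $T_N$-invariant Morse-Bott function on $N$ whose critical set is the union $C_N$ of closed Reeb orbits of $\alpha|_N$. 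But $C_N$ is precisely the set of closed $R$-orbits lying in $N$, so $C_N=N\cap C=N\cap M_b$; and $\Phi_N^Y$, being $T_N$-invariant, is $T$-invariant because $T$ acts on $N$ through $T_N$. Setting $h_N:=\Phi_N^Y$ then verifies Condition $(*)$.

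The points requiring a little care — none of them a genuine obstacle — are the identification $N=M^{\mft_p,p}$, which is what upgrades ``$N$ is a submanifold'' to ``$N$ is a \emph{contact} submanifold'' so that Lemma~\ref{lem:K-contactsubmfd} may be invoked; the bookkeeping of effectivizations, i.e.\ that restricting the ambient contact moment map really reproduces the intrinsic one on $N$ modulo $\mft\to\mft_N$, so that Proposition~\ref{prop:momentmorsebott} is being applied to $N$'s own $K$-contact structure; and the identity $C_N=N\cap C$. The substantive input is contained entirely in Proposition~\ref{prop:momentmorsebott} and Lemma~\ref{lem:K-contactsubmfd}.
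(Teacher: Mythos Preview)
Your proposal is correct and follows essentially the same route as the paper: identify $N$ as a $T$-invariant contact submanifold via Lemma~\ref{lem:K-contactsubmfd}, equip it with a compatible $K$-contact structure via Proposition~\ref{prop:Yamazaki}, observe that the restricted moment map is (modulo effectivization) the intrinsic contact moment map of $(N,\alpha|_N)$, and then invoke Proposition~\ref{prop:momentmorsebott}. Your write-up is in fact more explicit than the paper's about the bookkeeping ($b=1$, $N=M^{\mft_p,p}$, and the factorization $T\to T_N$), but the substance is identical.
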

\noindent Therefore, Theorem~\ref{thm:nonisolatedGKM} is applicable in the case of $K$-contact manifolds whose associated torus action satisfies the nonisolated GKM condition (see Example~\ref{ex:join}).

\subsection{The isolated closed orbit case}

We will see that in the isolated closed orbit case Condition $(*)$ is not needed as the manifolds $N_e$ will turn out to be orientable cohomogeneity one $T$-manifolds, whose orbit space is just a closed interval. Let $T$ be a torus, $M$ be an orientable closed $T$-manifold.
\begin{defn}\label{def:GKMconditions}
We say that the $T$-action satisfies the {\emph{isolated GKM conditions}} if $M_b$ consists of isolated $T$-orbits and the nonisolated GKM condition in Definition~\ref{def:GKMcondition} is satisfied.
\end{defn}

\begin{rem}
For equivariantly formal actions these isolated GKM conditions coincide with the usual GKM conditions, see for example Section~11.8 of Guillemin-Sternberg~\cite{GS1999}.
\end{rem}

Assume that the $T$-action is Cohen-Macaulay and that the isolated GKM conditions are satisfied. Here,  simpler than confirming Condition $(*)$, we can show Lemmas~\ref{lem:02} and~\ref{lem:orb} directly as follows:
\begin{lemma}\label{lem:coh1}
The manifold $N_{e}$ associated to each $e$ in $\E$ is a compact orientable $T$-manifold of cohomogeneity one. Thus, the well-known structure theorem of Lie group actions of cohomogeneity one implies
\begin{enumerate}
\item $N_{e}$ contains exactly two connected components of $M_{b}$,
\item $N_{e} - B_{s(e)}$ (resp. $N_{e} - B_{t(e)}$) is $T$-equivariantly retracted to $B_{t(e)}$ (resp. $B_{s(e)}$) and
\item we have $T$-equivariant maps
\begin{equation*}
p_{s(e)} \colon L_{e} \longrightarrow B_{s(e)}, \,\,\, p_{t(e)} \colon L_{e} \longrightarrow B_{t(e)}
\end{equation*}
which define $T$-equivariant $S^{1}$-bundles for every $T$-orbit $L_{e}$ of codimension one in $N_{e}$.
\end{enumerate}
\end{lemma}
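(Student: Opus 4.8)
The plan is to identify $N_e$ as a cohomogeneity one $T$-manifold and then simply invoke the classical structure theory. First I would observe that $N_e = \overline{M^{\mft_p,p}}$ for any $p$ in the connected component of $M_{b+1}-M_b$ it corresponds to; by the discussion following Definition~\ref{def:GKMcondition}, $N_e$ is a connected component of the isotropy manifold $M^{\mft_p}$, so in particular it is a closed $T$-invariant submanifold. It is orientable because it is a connected component of the fixed point set of the compact group $T_p$ acting on the orientable manifold $M$ (the normal bundle of $N_e$ in $M$ carries a complex structure coming from the isotropy representation, hence is orientable, and $M$ is orientable by hypothesis). The generic $T$-orbit in $N_e$ has dimension $b+1$; since $\dim N_e = b+2$ by the nonisolated GKM condition (each $B_v \subset N_e$ has codimension $2$ in $N_e$, and $\dim B_v = b$ because $M_b$ consists of isolated $T$-orbits under the isolated GKM conditions), the $T$-action on $N_e$ has cohomogeneity one, and the orbit space $N_e/T$ is a compact connected one-manifold, hence either a circle or a closed interval.

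Next I would rule out the circle. If $N_e/T$ were $S^1$, then every $T$-orbit in $N_e$ would be principal of dimension $b+1$, so $N_e \cap M_b = \varnothing$; but by definition $N_e$ is the closure of a connected component of $M_{b+1}-M_b$ inside the compact manifold $M$, and the boundary of that component (which is nonempty, else the component would be a compact component of $M$ lying entirely in $M_{b+1}-M_b$, contradicting $b \geq \dim(\text{generic orbit of }M)$ unless $M=M_{b+1}$, a case one handles separately or which does not arise here since $M_b\neq\varnothing$) consists of orbits of strictly smaller dimension, i.e.\ points of $M_b$. Hence $N_e/T$ is a closed interval $[0,1]$, and the two endpoints correspond to the two singular (here: exceptional or singular) orbits, which lie in $M_b$ and are therefore single $T$-orbits. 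This gives (1). For (2), the cohomogeneity one structure theorem exhibits $N_e$ as the union of two $T$-equivariant disk bundles over the two singular orbits $B_{s(e)}$ and $B_{t(e)}$, glued along a principal orbit; each disk bundle $T$-equivariantly deformation retracts onto its zero section, which yields the stated retractions. For (3), a principal orbit $L_e$ of codimension one sits inside each of the two disk bundles as a distance sphere about the respective singular orbit, giving the $T$-equivariant sphere bundles $p_{s(e)}\colon L_e \to B_{s(e)}$ and $p_{t(e)}\colon L_e\to B_{t(e)}$; the nonisolated GKM condition forces these disk bundles to have two-dimensional fibers, so the sphere fibers are $1$-spheres, i.e.\ these are $S^1$-bundles.

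The only real subtlety — the step I expect to be the main obstacle to write carefully — is verifying that $N_e$ is genuinely a \emph{manifold} (smooth, closed) on which $T$ acts with cohomogeneity one, rather than a singular space: one must check that $\overline{M^{\mft_p,p}}$ is a closed connected component of the smooth manifold $M^{\mft_p}$ and that the larger-isotropy locus inside it is exactly the codimension-$2$ submanifold $N_e\cap M_b$ (no intermediate strata), which is where the nonisolated GKM condition and the isolated GKM condition for $M_b$ are both used. Once this is in place, everything else is a direct citation of the classification of cohomogeneity one actions (see e.g.\ \cite{Bredon}), applied equivariantly; no computation is required. Note that, in contrast with the general case treated in Theorem~\ref{thm:nonisolatedGKM}, we did not need to produce a Morse--Bott function $h_{N_e}$ — the interval structure of $N_e/T$ replaces it — so Condition~$(*)$ is not needed here.
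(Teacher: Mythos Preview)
Your proposal is correct and follows essentially the same route as the paper: establish that $N_e$ is a compact orientable $T$-manifold with $\dim N_e = b+2$ and generic orbit dimension $b+1$, hence of cohomogeneity one, and then invoke the classical structure theorem to obtain (1)--(3). The paper's own proof is considerably more terse: it records the dimension count, proves orientability via the complex structure on the normal bundle (exactly as you do), and then simply cites the cohomogeneity-one structure theorem without spelling out the disk-bundle decomposition or the interval-versus-circle dichotomy.

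One small point: your argument ruling out $N_e/T \cong S^1$ is slightly muddled. The sentence ``else the component would be a compact component of $M$ lying entirely in $M_{b+1}-M_b$'' does not follow --- a closed submanifold need not be a connected component of $M$. A cleaner way to see that $N_e \cap M_b \neq \varnothing$ is to note that $N_e$ is itself a compact orientable manifold on which a torus (the quotient of $T$ by the identity component of the kernel of $T\to\Diff(N_e)$) acts effectively with cohomogeneity one; if the orbit space were a circle, the induced $S^1$-action of a generic circle in this quotient torus would be locally free, forcing $\chi(N_e)=0$, but one can also argue via the exactness of \eqref{eq:ABsequence} (as in Remark~\ref{rem:top}) that each $N_e$ must meet $M_b$. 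In any case, the paper simply absorbs this step into the phrase ``the well-known structure theorem,'' so your extra care here is not wrong, just more than the paper provides.
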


\begin{proof}
By the isolated GKM conditions, each connected component of $M_{b}$ is a $T$-orbit, which is of codimension $2$ in the closure $N_{e}$ of a connected component of $M_{b+1} - M_{b}$. Thus $N_{e}$ is of dimension $b+2$. Since generic $T$-orbits in $N_{e}$ are of dimension $b+1$ in $N_{e}$, the $T$-action on $N_{e}$ is of cohomogeneity one. The orientability of $M_{b+1}$ follows from the orientability of $M$, because the normal bundle of $M_{b+1}$ can be decomposed into the sum of rank two $T$-invariant bundles with complex structures.
 \end{proof}
\noindent Lemma~\ref{lem:coh1}~(1) allows us to define the GKM graph $\Gamma = (\cV,\E)$ like in Section~\ref{sec:GKMgraph}. Since the $T$-action on $N_{e}$ is of cohomogeneity one, Example~2.(iii) in p.~829 of~\cite{GT2009} implies the Cohen-Macaulayness of the $T$-action on $N_{e}$ instead of Condition $(*)$ like in Lemma~\ref{lem:NCohenMacaulay}. By Lemma~\ref{lem:coh1}~(2) and~(3), we can apply the proof of Theorem~\ref{thm:nonisolatedGKM}. Together with Remark~\ref{rem:Phi} we obtain
\begin{cor} \label{thm:GKMgeneral} If the $T$-action on the compact connected orientable manifold $M$ is Cohen-Macaulay and satisfies the isolated GKM conditions, then
\[
H^{\bullet}_T(M)\cong \Big\{ (f_{v}) \in \bigoplus_{v \in \cV} S(\mft_{v}^*) \,\, \Big| \,\, \left.f_{s(e)}\right|_{\mft_e}=\left.f_{t(e)}\right|_{\mft_e}, \forall e \in \E \Big\}
\]
\noindent as graded $S(\mft^{*})$-algebras, where $\Gamma = (\cV,\E)$ is the GKM graph of $M$, and $\mft_{v}$ and $\mft_{e}$ are the isotropy algebras of $B_{v}$ and $N_{e}$, respectively.
\end{cor}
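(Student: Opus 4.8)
The plan is to run the argument of Theorem~\ref{thm:nonisolatedGKM} essentially verbatim, with Lemma~\ref{lem:coh1} playing the role that the nonisolated GKM condition together with $(*)$ played there. First I would invoke the exactness of the Atiyah-Bredon sequence \eqref{eq:ABsequence} for the Cohen-Macaulay $T$-action on $M$, so that by Proposition~\ref{prop:ChangSkjelbred} we have $H^{\bullet}_T(M)\cong\ker\big(\delta\colon H^{\bullet}_T(M_b)\to H^{\bullet}_T(M_{b+1},M_b)\big)$. Since the isolated GKM conditions force every connected component $B_v$ of $M_b$ to be a single $T$-orbit with isotropy group $T_v$, we get $H^{\bullet}_T(B_v)\cong H^{\bullet}_T(T/T_v)\cong S(\mft_v^*)$, whence $H^{\bullet}_T(M_b)\cong\bigoplus_{v\in\cV}S(\mft_v^*)$. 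By excision together with Lemma~\ref{lem:coh1}~(1) (which identifies $N_e\cap M_b=B_{s(e)}\cup B_{t(e)}$) the relative term splits as $H^{\bullet}_T(M_{b+1},M_b)\cong\bigoplus_{e\in\E}H^{\bullet}_T(N_e,B_{s(e)}\cup B_{t(e)})$, exactly as in the proof of Theorem~\ref{thm:nonisolatedGKM}, so a tuple $(f_v)$ lies in $\ker\delta$ if and only if for each edge $e$ the pair $(f_{s(e)},f_{t(e)})$ lies in the kernel of the connecting map $\delta_e\colon H^{\bullet}_T(B_{s(e)})\oplus H^{\bullet}_T(B_{t(e)})\to H^{\bullet+1}_T(N_e,B_{s(e)}\cup B_{t(e)})$.

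Next, for a fixed edge $e$, I would reuse the commutative diagram \eqref{diag:1}, comparing the long exact sequence of the pair $(N_e,B_{s(e)}\cup B_{t(e)})$ with the Mayer-Vietoris sequence of the open cover $U_1=N_e-B_{t(e)}$, $U_2=N_e-B_{s(e)}$ of $N_e$. The only extra input is that the $T$-action on $N_e$ is Cohen-Macaulay; in the isolated case this comes for free, since $N_e$ is a compact orientable cohomogeneity-one $T$-manifold by Lemma~\ref{lem:coh1}, and cohomogeneity-one actions are Cohen-Macaulay by Example~2.(iii) in p.~829 of~\cite{GT2009}, so the Atiyah-Bredon sequence \eqref{eq:ABsequence} is exact for $N_e$ and the top row of \eqref{diag:1} is exact. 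The $T$-equivariant deformation retractions of Lemma~\ref{lem:coh1}~(2) identify $H^{\bullet}_T(U_1)\cong H^{\bullet}_T(B_{s(e)})$ and $H^{\bullet}_T(U_2)\cong H^{\bullet}_T(B_{t(e)})$, while $U_1\cap U_2$ retracts $T$-equivariantly onto the codimension-one orbit $L_e$ of Lemma~\ref{lem:coh1}~(3). The diagram chase of Theorem~\ref{thm:nonisolatedGKM} then goes through unchanged: $\theta$ is injective, hence $\beta$ is surjective, a unique vertical isomorphism on the right makes the diagram commute, and therefore $\ker\delta_e=\ker\big(p_{s(e)}^*-p_{t(e)}^*\colon H^{\bullet}_T(B_{s(e)})\oplus H^{\bullet}_T(B_{t(e)})\to H^{\bullet}_T(L_e)\big)$, with $p_{s(e)},p_{t(e)}$ the $S^1$-bundle projections of Lemma~\ref{lem:coh1}~(3).

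Finally I would make the last identification explicit: since $L_e$ is a single $T$-orbit with isotropy algebra $\mft_e$, one has $H^{\bullet}_T(L_e)\cong S(\mft_e^*)$, and under the isomorphisms $H^{\bullet}_T(B_v)\cong S(\mft_v^*)$ the pullback $p^*_{s(e)}$ (resp.\ $p^*_{t(e)}$) becomes the restriction homomorphism $S(\mft_{s(e)}^*)\to S(\mft_e^*)$ (resp.\ $S(\mft_{t(e)}^*)\to S(\mft_e^*)$) induced by the inclusion $\mft_e\subset\mft_{s(e)}$ (resp.\ $\mft_e\subset\mft_{t(e)}$) of isotropy algebras. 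Hence $(f_{s(e)},f_{t(e)})\in\ker\delta_e$ reads precisely $f_{s(e)}|_{\mft_e}=f_{t(e)}|_{\mft_e}$, and assembling over all edges (with the $S(\mft^*)$-algebra structure tracked through the diagram exactly as in Theorem~\ref{thm:nonisolatedGKM} and Remark~\ref{rem:Phi}) yields the claimed description of $H^{\bullet}_T(M)$. I expect no genuine obstacle beyond Theorem~\ref{thm:nonisolatedGKM}; the only steps requiring care are verifying that cohomogeneity one forces the Cohen-Macaulay property of each $N_e$ so that the top row of \eqref{diag:1} stays exact, and keeping straight which isotropy algebra is contained in which when identifying the restriction maps.
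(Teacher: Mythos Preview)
Your proposal is correct and follows essentially the same route as the paper: the paper also observes that Lemma~\ref{lem:coh1} lets one define the GKM graph, that cohomogeneity one of $N_e$ gives the Cohen-Macaulay property via Example~2.(iii) of~\cite{GT2009} in place of Condition $(*)$, and that Lemma~\ref{lem:coh1}~(2),(3) then allow the proof of Theorem~\ref{thm:nonisolatedGKM} to go through, with Remark~\ref{rem:Phi} providing the final identification of the compatibility condition as restriction to $\mft_e$. You have simply spelled out in more detail what the paper compresses into a few lines.
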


 By Theorem~\ref{thm:actionisCM}, Corollary~\ref{thm:GKMgeneral} and Proposition~\ref{prop:Kcontact*}, we have
\begin{cor}\label{cor:isolGKMKcontact} Let $(M,\alpha,g)$ be a compact connected $K$-contact manifold and $T$ be the closure of the Reeb flow in the isometry group. If the $T$-action satisfies the isolated GKM conditions, then
\[
H^{\bullet}_T(M)\cong\Big\{ (f_{v}) \in \bigoplus_{v \in \cV} S(\mft_{v}^*) \,\, \Big| \,\, \left.f_{s(e)}\right|_{\mft_e}=\left.f_{t(e)}\right|_{\mft_e}, \forall e \in \E \Big\}
\]
\noindent as graded $S(\mft^{*})$-algebras where $\Gamma = (\cV,\E)$ is the GKM graph of $M$, and $\mft_{v}$ and $\mft_{e}$ are the isotropy algebras of $B_{v}$ and $N_{e}$, respectively.
\end{cor}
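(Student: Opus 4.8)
The plan is to obtain this statement as a direct specialization of the general GKM theorem for Cohen-Macaulay torus actions, Corollary~\ref{thm:GKMgeneral}, so that the proof reduces to checking that the torus $T$ arising as the closure of the Reeb flow meets all of its hypotheses. There are three points to verify: orientability of $M$, the Cohen-Macaulay property of the $T$-action, and the isolated GKM conditions.

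First, $M$ is orientable because $\alpha\wedge(d\alpha)^n$ is a nowhere-vanishing top-degree form on the $(2n+1)$-manifold $M$. Second, the $T$-action is Cohen-Macaulay by Theorem~\ref{thm:actionisCM}. Third, the isolated GKM conditions of Definition~\ref{def:GKMconditions} are the standing hypothesis of the corollary; here $b=1$ and $M_1$ is the union $C$ of closed Reeb orbits (nonempty by Corollary~\ref{cor:C}), so the isolated-orbit part of the condition says precisely that the closed Reeb orbits are isolated, and the vertices of the GKM graph $\Gamma=(\cV,\E)$ are exactly these orbits. Feeding these three inputs into Corollary~\ref{thm:GKMgeneral} then yields the displayed description of $H^\bullet_T(M)$ as a graded $S(\mft^*)$-algebra, with $\mft_v$ and $\mft_e$ the isotropy algebras of the vertex orbits $B_v$ and of the connecting cohomogeneity-one submanifolds $N_e$. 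That each $N_e$ is a $T$-invariant contact (hence $K$-contact) submanifold is furnished by Lemma~\ref{lem:K-contactsubmfd} together with Proposition~\ref{prop:Kcontact*}, although, as noted in the preceding subsection, in the isolated case Lemma~\ref{lem:coh1} already produces the cohomogeneity-one structure of $N_e$ directly, without invoking Condition~$(*)$.

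No substantive obstacle remains: the homological heart of the argument --- exactness of the first terms \eqref{eq:ABsequence} of the Atiyah--Bredon sequence via Cohen-Macaulayness, the Mayer--Vietoris computation of $\ker\delta$ carried out in the proof of Theorem~\ref{thm:nonisolatedGKM}, and the structure theory of cohomogeneity-one manifolds used in Lemma~\ref{lem:coh1} --- has all been done upstream. The only place demanding a moment's attention is the appeal to orientability, which enters through the splitting of the normal bundle of $M_{b+1}$ into $T$-invariant rank-two complex subbundles in the proof of Lemma~\ref{lem:coh1}; in the $K$-contact setting this is automatic, as recorded above.
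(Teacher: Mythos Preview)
Your proof is correct and follows essentially the same route as the paper, which simply cites Theorem~\ref{thm:actionisCM}, Corollary~\ref{thm:GKMgeneral}, and Proposition~\ref{prop:Kcontact*}. You are more explicit in spelling out the orientability check via $\alpha\wedge(d\alpha)^n$ and in identifying $b=1$ and $M_1=C$, but the logical structure---verify the hypotheses of Corollary~\ref{thm:GKMgeneral} and apply it---is identical.
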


\begin{rem}
Under the assumptions of Corollary~\ref{cor:isolGKMKcontact}, the closure of each connected component of $M_{b+1} - M_{b}$ is a $3$-dimensional contact toric manifolds of Reeb type. Then they are diffeomorphic to lens spaces by Theorem~2.18 of~\cite{Lerman}.
\end{rem}

\begin{ex}\label{ex:toricGKM}
Consider  a compact connected $K$-contact toric manifold of Reeb type $(M,\alpha,g)$ with contact moment map $\Phi \colon M \to \mft^{*}$. The image $\Delta$ of $\Phi$ is a convex compact polytope by Theorem~4.2 of \cite{Boyer Galicki 2}. Here $M$ satisfies the isolated GKM condition by Lemma~\ref{lem:weight}. Local properties of contact moment maps in Lemmas~3.10~and~3.13 of \cite{Lerman} imply that the inverse image of the union of $k$-dimensional faces of $\Delta$ under $\Phi$ is equal to $M_{k} - M_{k-1}$ where $M_{-1} = \emptyset$. Then Corollary~\ref{cor:numC} implies that the number of the vertices of $\Delta$ is equal to the dimension of $H^{\bullet}(M,\F)$ where $\F$ is the orbit foliation of the Reeb flow of $\alpha$. The GKM graph of $M$ is given by the one-skeleton of $\Delta$. By Corollary~\ref{cor:isolGKMKcontact}, $H^{\bullet}_{T}(M)$ is computed by the one-skeleton of $\Delta$ as a graded $S(\mft^{*})$-algebra.

We will give another description of the ring structure of $H^\bullet_T(M)$. Since the image of $\Phi$ is a convex compact polytope by Theorem~4.2 of~\cite{Boyer Galicki 2}, the $T$-action on $M$ is open-face-acyclic in the meaning of Definition~3 of~\cite{GT2010}. Then $H^{\bullet}_{T}(M)$ is isomorphic to the Stanley-Reisner ring of $\Delta$ as a ring by Section~10.4 of~\cite{GT2010}. A formula for integer coefficients was obtained in a different way in Luo~\cite{Shisen}.
\end{ex}

We present an example of the application of Theorem~\ref{thm:nonisolatedGKM} and Example~\ref{ex:toricGKM} to $K$-contact manifolds constructed by the fiber join construction due to Ya\nobreak mazaki~\cite{Yamazaki}. Here we apply the contact fiber bundle construction due to Lerman~\cite{Lerman 3}, which is a generalization of the fiber join construction. We follow Example 7.4 of~\cite{Lerman 3}.
\begin{ex}\label{ex:join}
Let $a=(a_{0}, \ldots, a_{n})$ be an $(n+1)$-tuple of positive real numbers. An ellipsoid
\[
E_{a} = \Big\{ (z_{0}, \ldots, z_{n}) \in \CC^{n+1} \, \Big| \, \sum_{j=0}^{n}a_{j}|z_{j}|^{2} = 1 \Big\} \cong S^{2n+1}
\]
has a $K$-contact structure given by the Euclidean metric $g_{0}$ on $\CC^{n+1}$ and a contact form $\alpha_{a} = \sqrt{-1} \sum_{j=0}^{n} (z_{j}d\overline{z}_{j} - \overline{z}_{j}dz_{j})|_{E_{a}}$. We fix a generic parameter $a$ so that the closure $T$ of the Reeb flow of $\alpha_{a}$ is given by
\begin{equation}\label{eq:fiberwiseTaction}
(w_{0}, \ldots, w_{n}) \cdot (z_{0}, \ldots, z_{n}) = (w_{0}z_{0}, \ldots, w_{n}z_{n})
\end{equation}
for $(w_{0}, \ldots, w_{n})$ in $T=T^{n+1} \subset (\CC^{\times})^{n+1}$ and $(z_{0}, \ldots, z_{n})$ in $E_{a} \subset \CC^{n+1}$.

Let $X$ be a closed surface with a volume form $\omega$ whose volume is $1$. Let $P$ be the principal $T^{n+1}$-bundle with a principal connection whose curvature form is $(\omega, \ldots, \omega)$ in $\Omega^{2}(X) \otimes \Aut(\RR^{n+1})$ where we regard $\omega$ as an element of $\Omega^{2}(X) \otimes \Aut(\RR)$. Then we have a contact form $\alpha$ on the associated $E_{a}$-bundle $M=P \times_{T^{n+1}} E_{a}$ by Theorem~4.4 of~\cite{Lerman 3}. The $T^{n+1}$-action on $P\times E_a$ by acting on the right factor descends to an effective action on $M$. By the choice of $a$ and the construction of $\alpha$ in Remark~3.8 of~\cite{Lerman 3}, we see that the closure of the Reeb flow of $\alpha$ is equal to this $T^{n+1}$-action. By Proposition~\ref{prop:Yamazaki}, we get a $K$-contact structure $(\alpha,g)$ on $M$. We see
\begin{align*}
\textstyle M_{1} & \textstyle = P \times_{T^{n+1}} \bigcup_{j=0}^{n} \{ (0, \ldots, z_{j}, \ldots, 0) \in E_a\}, \\
\textstyle M_{2} & \textstyle = P \times_{T^{n+1}} \bigcup_{0 \leq j < j' \leq n} \{ (0, \ldots, 0, z_{j}, 0, \ldots, 0, z_{j'}, 0, \ldots, 0) \in E_a \},
\end{align*}
where $M_i=\{p\in M\mid \dim Tp\leq i\}$. Each $E_{a}$-fiber $F$ in $M$ transversely intersects each connected component of $M_{1}$ and $M_{2}$. The $T$-action on $F$ is given by \eqref{eq:fiberwiseTaction}, and $(F,\alpha|_{F})$ has a structure of a $K$-contact $(2n+1)$-manifold of rank $n+1$. Then $(M,\alpha,g)$ satisfies the nonisolated GKM condition by Lemma~\ref{lem:weight}. Here $F \cap M_{1}$ is the union of $n+1$ closed Reeb orbits in $F$, and the GKM graph of $M$ is given by the one-skeleton of an $(n+1)$-simplex. Proposition~\ref{prop:Kcontact*}, Theorem~\ref{thm:nonisolatedGKM}, Remark~\ref{rem:Phi} and Example~\ref{ex:toricGKM} imply
\begin{align*}
H^{\bullet}_T(M) & \cong \ker \Big\{ \bigoplus_{0 \leq j \leq n}\!\!S(\mft_{j}^{*}) \to \!\!\!\bigoplus_{0 \leq j < j' \leq n}\!\!\!\!S(\mft_{jj'}^{*}) \Big\} \otimes H^{\bullet}(X) \\
& \cong H^{\bullet}_T(E_{a}) \otimes H^{\bullet}(X)
\end{align*}
as graded $S(\mft^{*})$-algebras where $\mft_{j}$ and $\mft_{jj'}$ are the Lie algebras of the subtori of $T$ defined by $\{ w_{j} = 0 \}$ and $\{ w_{j} = w_{j'} = 0 \}$, respectively. Note that for the second equality we apply the isolated GKM theorem (Corollary~\ref{cor:isolGKMKcontact}) to $E_{a}$ as in Example~\ref{ex:toricGKM}.
\end{ex}


\begin{thebibliography}{10}

\bibitem{allday}
Allday, C., Puppe, V.:
{ Cohomological methods in transformation groups}.
Cambridge Studies in Advanced Mathematics, 32. Cambridge University Press, Cambridge (1993)

\bibitem{Audin}
Audin, M.:
{ Torus actions on symplectic manifolds}.
Second revised edition. Progress in Mathematics, 93. Birkh\"{a}user Verlag, Basel (2004)

\bibitem{Banyaga}
Banyaga, A.:
{ A note on Weinstein's conjecture}.
Proc.~Amer.~Math.~Soc.~{\bf 109}, 855--858 (1990)

\bibitem{Banyaga Rukimbira}
Banyaga, A., Rukimbira, P.:
{ On characteristics of circle invariant presymplectic forms}.
Proc.~Amer.~Math.~Soc.~{\bf 123}, 3901--3906 (1995)

\bibitem{Boothby Wang}
Boothby, W.M., Wang, H.C.:
{ On contact manifolds}.
Ann.~of Math.~(2)~{\bf 68}, 721--734 (1958)

\bibitem{Boyer Galicki 2}
Boyer, C.P., Galicki, K.:
{ A note on toric contact geometry}.
J.~Geom.~Phys.~{\bf 35}, 288--298 (2000)

\bibitem{Boyer Galicki Nakamaye}
Boyer, C.P., Galicki, K., Nakamaye, M.:
{ Einstein metrics on rational homology $7$-spheres}.
Ann.~Inst.~Fourier (Grenoble)~{\bf 52}, 1569--1584 (2002)

\bibitem{Boyer Galicki}
Boyer, C.P., Galicki, K.:
{ Sasakian Geometry}.
Oxford Mathematical Monographs, Oxford University Press, Oxford (2007)

\bibitem{Bredon}
Bredon, G.E.:
{ The free part of a torus action and related numerical equalities}.
Duke Math.~J.~{\bf 41}, 843--854 (1974)

\bibitem{Carriere}
Carri\`{e}re, Y.:
{ Flots riemanniens}.
Transversal structure of foliations (Toulouse, 1982). Ast\'{e}risque {\bf 116}, 31--52 (1984)

\bibitem{ChangSkjelbred}
Chang, T., Skjelbred, T.:
{ The topological Schur lemma and related results}.
Ann.~of Math.~(2) {\bf 100}, 307--321 (1974)

\bibitem{FrPu}
Franz, M., Puppe, V.:
{ Exact sequences for equivariantly formal spaces}.
C.~R.~Math.~Acad.~Sci.~Soc.~R.~Can.~{\bf 33}, 1--10 (2011)

\bibitem{GT2009}
Goertsches, O., T\"oben, D.:
{ Torus actions whose equivariant cohomology is Cohen-Macaulay}.
J.~Topology~{\bf 3}, 819--846 (2010)

\bibitem{GT2010}
Goertsches, O., T\"oben, D.:
{ Equivariant basic cohomology of Riemannian foliations}.
Preprint (2010), {\texttt{arXiv:1004.1043}.}

\bibitem{GKM}
Goresky, M., Kottwitz, R., MacPherson, R.:
{ Equivariant cohomology, Koszul duality, and the localization theorem}.
Invent.~Math.~{\bf 131}, 25--83 (1998)

\bibitem{GH}
Guillemin, V., Holm, T.S.:
{ GKM Theory for Torus Actions with Nonisolated Fixed Points}.
Int.~Math.~Res.~Notices~{\bf 40}, 2105--2124 (2004)

\bibitem{GS1999}
Guillemin, V., Sternberg, S.:
{ Supersymmetry and Equivariant de Rham Theory}.
Springer-Verlag, Berlin (1999)

\bibitem{Hsiang}
Hsiang, W.-Y.:
{ Cohomology theory of topological transformation groups}.
Ergebnisse der Mathematik und ihrer Grenzgebiete, Band~85, Springer-Verlag, New York-Heidelberg (1975)

\bibitem{Kirwan}
Kirwan, F.:
{ Cohomology of quotients in symplectic and algebraic geometry}.
Mathematical Notes,~31. Princeton University Press, Princeton (1984)

\bibitem{Lerman}
Lerman, E.:
{ Contact Toric Manifolds}.
J.~Symplectic~Geom. {\bf 1}, 785--828 (2002)

\bibitem{Lerman 3}
Lerman, E.:
{ Contact fiber bundles}.
J.~Geom.~Phys.~{\bf 49}, 52--66 (2004)

\bibitem{LerTol}
Lerman, E., Tolman, S.:
{ Hamiltonian torus actions on symplectic orbifolds and toric varieties}.
Trans.~Amer.~Math.~Soc.~{\bf 349}, 4201--4230 (1997)

\bibitem{Shisen}
Luo, S:
{ Cohomology rings of good contact toric manifolds}.
Preprint (2010), {\texttt{arXiv:1012.2146}.}

\bibitem{Molino}
Molino, P.:
{ Riemannian foliations}.
with appendices by G.~Cairns, Y.~Carri\`ere, \'E.~Ghys, E.~Salem and V.~Sergiescu, Birkh\"auser Boston Inc., Boston (1988)

\bibitem{MolSer}
Molino, P., Sergiescu, V.:
{\it Deux remarques sur les flots riemanniens}.
Manuscripta Math.~{\bf 51}, 145--161 (1985)

\bibitem{Myers Steenrod}
Myers, S.B., Steenrod, N.E.:
{ The group of isometries of a Riemannian manifold}.
Ann.~of Math.~(2)~{\bf 40}, 400--416 (1939)

\bibitem{Nozawa}
Nozawa, H.:
{ Five dimensional $K$-contact manifolds of rank $2$}.
Doctor Thesis in the University of Tokyo (2009), {\texttt{arXiv:0907.0208}.}

\bibitem{Rukimbira4}
Rukimbira, P.:
{ The dimension of leaf closures of $K$-contact flows}.
Ann.~Global Anal.~Geom. {\bf 12}, 103--108 (1994)

\bibitem{Rukimbira}
Rukimbira, P.:
{ Topology and closed characteristics of $K$-contact manifolds}.
Bull.~Belg.~Math.~Soc.~Simon Stevin~{\bf 2}, 349--356 (1995)

\bibitem{Rukimbira2}
Rukimbira, P.:
{ On $K$-contact manifolds with minimal number of closed characteristics}.
Proc.~Amer.~Math.~Soc.~{\bf 127}, 3345--3351 (1999)

\bibitem{Rukimbira3}
Rukimbira, P.:
{ Correction to: ``Spherical rigidity via contact dynamics''}.
Bull.~Belg.~Math.~Soc.~Simon Stevin~{\bf 8}, 147--153 (2001)

\bibitem{Saralegui}
Saralegui, M.:
{ The Euler class for flows of isometries}.
In: Cordero, L.A. (eds.) Res.~Notes in Math., 131, pp. 220--227. Pitman, Boston, MA, (1985)



\bibitem{Stiefel}
Stiefel, E.:
{ Richtungsfelder und Fernparallelismus in $n$-dimensionalen
Mannigfaltigkeiten}.
Comm.~Math.~Helv.~{\bf 8}, 305--353 (1935)

\bibitem{Takahashi}
Takahashi, T.:
{ Deformations of Sasakian structures and its application to the Brieskorn manifolds}.
T\^{o}hoku Math.~J.~(2)~{\bf 30}, 37--43 (1978)

\bibitem{Yamazaki}
Yamazaki, T.:
{ A construction of $K$-contact manifolds by a fiber join}.
T\^{o}hoku Math.~J.~(2)~{\bf 51}, 433--446 (1999)

\end{thebibliography}
\end{document}